\newtheorem{theorem}{Theorem}[section]
\newtheorem{corollary}[theorem]{Corollary}
\newtheorem{lemma}[theorem]{Lemma}
\newtheorem{remark}[theorem]{Remark}
\newtheorem{definition}[theorem]{Definition}
\newtheorem{proposition}[theorem]{Proposition}
\def\bN {{\mathbb N}}
\def\sU {{\mathcal U}}
\def\sV {{\mathcal V}}
\def\sW {{\mathcal W}}
\def\sB {{\mathcal B}}
\def\sF {{\mathcal F}}
\DeclareMathOperator\cl{\operatorname{cl}}
\begin{document}
\title{Cardinal inequalities for $S(n)$-spaces}
\author{Ivan S. Gotchev}
\address{Central Connecticut State University\\
New Britain, CT 06050, USA}
\email{gotchevi@ccsu.edu}
\thanks{Some of the results in this paper were announced at the \emph{Spring 
Topology and Dynamical Systems Conference}, Berry College, Mount Berry, GA, March 17--19, 2005}
\subjclass[2010]{Primary 54A25, 54D10}
\keywords{Cardinal function, $S(n)$-space, $S(n)$-character, 
$S(n)$-pseudocharacter, $S(n)$-discrete, $S(n)$-spread, $S(n)$-cellularity}
\begin{abstract}
Hajnal and Juh\'asz proved that if $X$ is a $T_1$-space, then $|X|\le 2^{s(X)\psi(X)}$, and if $X$ is a Hausdorff space, 
then $|X|\le 2^{c(X)\chi(X)}$ and $|X|\le 2^{2^{s(X)}}$. Schr\"oder sharpened the first two estimations by showing that 
if $X$ is a Hausdorff space, then $|X|\le 2^{Us(X)\psi_c(X)}$, and if $X$ is a Urysohn space, then 
$|X|\le 2^{Uc(X)\chi(X)}$. 

In this paper, for any positive integer $n$ and some topological spaces $X$, we define the cardinal functions 
$\chi_n(X)$, $\psi_n(X)$, $s_n(X)$, and $c_n(X)$ called respectively $S(n)$-character, $S(n)$-pseudocharacter, 
$S(n)$-spread, and $S(n)$-cellularity and using these new cardinal functions we show that the above-mentioned 
inequalities could be extended to the class of $S(n)$-spaces. We recall that the $S(1)$-spaces are exactly the Hausdorff 
spaces and the $S(2)$-spaces are exactly the 
Urysohn spaces.
\end{abstract}

\maketitle

\section{Introduction}

Hajnal and Juh\'asz, in 1969 (see \cite{HajJuh67}) proved that if $X$ is a $T_1$-space, then $|X|\le 2^{s(X)\psi(X)}$, and 
if $X$ is a Hausdorff space, then $|X|\le 2^{c(X)\chi(X)}$ and $|X|\le 2^{2^{s(X)}}$. Later Schr\"oder in \cite{Sch93} 
proved that if $X$ is a Hausdorff space, then the first inequality could be sharpened to $|X|\le 2^{Us(X)\psi_c(X)}$, and 
if $X$ is a Urysohn space, then the second estimation could be improved to $|X|\le 2^{Uc(X)\chi(X)}$. In 
\cite[Theorem 12]{Sch93} the author also claimed that if $X$ is a Urysohn space, then the third inequality could be 
improved to $|X|\le 2^{2^{Us(X)}}$. The proof of his Theorem 12 is based on \cite[Lemma 11]{Sch93} which states 
that if $X$ is a Urysohn space, then $\psi_c(X)\le 2^{Us(X)}$. Unfortunately, there is a gap in the proof of that Lemma 11 
and therefore the validity of the last two claims are unknown. We were able to prove both claims only when $X$ is an 
$S(3)$-space (see Corollaries \ref{CIG1} and \ref{CIG2}).

Let $n$ be a positive integer. In this paper, for some spaces $X$, we define the cardinal functions 
$\chi_n(X)$, $\psi_n(X)$, $s_n(X)$, and $c_n(X)$ called respectively $S(n)$-character, $S(n)$-pseudocharacter, 
$S(n)$-spread, and $S(n)$-cellularity and using them we extend the above-mentioned inequalities to the class of 
$S(n)$-spaces (see Definition \ref{D1}). In particular, we show that if $X$ is an $S(n)$-space, then 
$\left| X\right| \leq 2^{c_{n}(X)\chi_{n}(X)}$ (Theorems \ref{ThGHJ2} and \ref{ThGSc2}).
When $n=1$ we obtain Hajnal and Juh\'asz' inequality $|X|\le 2^{c(X)\chi(X)}$ for Hausdorff 
spaces $X$ and the case $n=2$ is the second part of Alas and Ko\v{c}inac' Theorem 2 in \cite{AlaKoc00}, which sharpens 
Schr\"oder's inequality $|X|\le 2^{Uc(X)\chi(X)}$ for Urysohn spaces $X$
(see Corollary \ref{CIGSch}).

Our Theorems \ref{THJIG} and \ref{TScIG} contain the generalizations to the class of $S(n)$-spaces of the corresponding 
inequalities of Hajnal and Juh\'asz and Schr\"oder mentioned above: Let $k$ be a positive integer. If $X$ is an 
$S(k-1)$-space, then $\left| X\right|\leq 2^{s_{2k-1}(X)\psi_{2k-1}(X)}$ and if $X$ is an $S(k)$-space, then 
$\left| X\right|\leq 2^{s_{2k}(X)\psi_{2k}(X)}$.

At the end of the paper we generalize Hajnal and Juh\'asz' inequality that if $X$ is a Hausdorff space, then 
$|X|\le 2^{2^{s(X)}}$. For that end we find upper bounds for the cardinality of the $S(n)$-pseudocharacter of a space, 
where $n$ is any positive integer. In Lemmas \ref{LHJIG} and \ref{LIG} we show that if $n$ is a positive integer and $X$ 
is an $S(n)$-space, then $\psi(X)\le 2^{s_n(X)}$ and, as a consequence, in Theorem \ref{TISG} we show that 
$\left| X\right|\leq 2^{s(X)\cdot 2^{s_{n}(X)}}$. In addition, we show that for every positive integer $k$, the following 
are true:

(a) If $X$ is an $S(3k)$-space, then $\psi_{2k}\le 2^{s_{2k}(X)}$ (Lemma \ref{LIG2G1}) and 
$\left| X\right|\leq 2^{2^{s_{2k}(X)}}$ (Theorem \ref{TIG2G1});

(b) If $X$ is an $S(3k-2)$-space, then $\psi_{2k-1}(X)\leq 2^{s_{2k-1}(X)}$ (Lemma \ref{LIG2G2}) and 
$\left| X\right|\leq 2^{2^{s_{2k-1}(X)}}$ (Theorem \ref{TIG2G2});

(c) If $X$ is an $S(3k-1)$-space, then $\psi_{2k-1}(X)\leq 2^{s_{2k}(X)}$ and $\psi_{2k}(X)\leq 2^{s_{2k-1}(X)}$ 
(Lemma \ref{LIG2G3} and Lemma \ref{LIG2G4}) and $\left| X\right|\leq 2^{s_{2k-1}(X)\cdot2^{s_{2k}(X)}}$, hence 
$\left| X\right|\leq 2^{2^{s_{2k-1}(X)}}$ (Theorem \ref{TIG2G3}).

More results about the cardinality of $S(n)$-spaces involving the cardinal functions $d_n(X)$, $t_n(X)$ and $bt_n(X)$, 
called respectively $S(n)$-density, $S(n)$-tightness and $S(n)$-bitightness, are contained in our paper \cite{GotKoc18}. 

\section{Preliminaries}

Notations and terminology in this paper are standard as in \cite{Eng89}, \cite{Juh80}, and \cite{Hod84}. Unless otherwise 
indicated, all spaces are assumed to be at least $T_1$ and infinite. $\alpha$, $\beta$, $\gamma$, and $\delta$ are ordinal 
numbers, while $\kappa$ denotes infinite cardinal; $\kappa^{+}$ is the successor cardinal of $\kappa$. As 
usual, cardinals are assumed to be initial ordinals, $\mathbb{N}^{+}$ denotes the set of all positive integers and 
$\mathbb{N}=\{0\}\cup \mathbb{N}^{+}$. If $X$ is a set, then $\mathfrak{P}(X)$ and $\left[ X\right] ^{\leq \kappa}$ 
denote the power set of $X$ and the collection of all subsets of $X$ having cardinality $\leq \kappa$, respectively.

\begin{definition}\label{D1}
Let $X$ be a topological space, $A\subset X$ and $n\in \mathbb{N}^{+}.$ A point $x\in X$ is \emph{$S(n)$-separated 
from} $A$ if there exist open sets $U_{i}$, $i=1,2,...,n$ such that $x\in U_{1}$, $\overline{U}_{i}\subset U_{i+1}$ for 
$i=1,2,...,n-1$ and $\overline{U}_{n}\cap A=\varnothing $; $x$ is $S(0)$-separated from $A$ if $x\notin \overline{A}.$ 
$X$ is an \emph{$S(n)$-space} \cite{Vig69} if every two distinct points in $X$ are $S(n)$-separated. 

Now, let $n\in \mathbb{N}$. The set $\cl_{\theta ^{n}}A=\{x\in X$ : $x$ is not $S(n)$-separated from $A\}$ is called  
$\theta ^{n}$\emph{-closure} of $A$ \cite{DikGiu88}. $A$ is $\theta ^{n}$\emph{-closed} \cite{PorVot73a} if 
$\cl_{\theta ^{n}}(A)=A$; $U\subset X$ is $\theta ^{n}$-\emph{open} if $X\backslash U$ is $\theta ^{n}$-closed; and 
$A$ is \emph{$\theta ^{n}$-dense} in $X$ if $\cl_{\theta ^{n}}(A)=X$.  
\end{definition}

It follows directly from Definition \ref{D1} that $S(1)$ is the class of Hausdorff spaces and $S(2)$ is the class of Urysohn 
spaces. Since we are going to consider here only $T_1$-spaces, $S(0)$-spaces will be exactly the $T_1$-spaces. Also, 
$\cl_{\theta ^{0}}(A)=\overline{A}$ and $\cl_{\theta ^{1}}(A)=\cl_{\theta }(A)$ - the so called 
\emph{$\theta $-closure of $A$} \cite{Vel66}. We want to emphasize here that in general, when 
$n \in \bN^+$, the $\theta^n$-closure operator is not idempotent, hence it is not a Kuratowski closure operator. In 
particular $\cl_\theta(\cl_\theta(A))\ne\cl_{\theta^2}(A)$. Finally, we 
note that if $X$ is a space and $n \in \bN^+$, then $\cl_{\theta^n}(U)=\cl_{\theta^{n-1}}(\overline{U})$ whenever $U$ is an open subset of $X$ (see 
\cite[Lemma 1.4(c)]{DikGiu88}).

In this paper it will be more convenient for us to think about $S(n)$-spaces in more 'symmetric' way similar to the way how 
$S(n)$-spaces are defined in \cite{DikTho95}, \cite{DikWat95} or \cite{PorVot73a} but here we are going to use different 
terminology and notation.

\begin{definition}\label{SnIG}
Let $X$ be a topological space, $U\subseteq X$, $x\in U$ and $k\in\bN^+$. 
We will say that $U$ is an \emph{$S(2k-1)$-neighborhood of $x$} if there exist open sets $U_{i}$, $i=1,2,...,k$, 
such that $x\in U_{1}$, $\overline{U}_{i}\subset U_{i+1}$, for $i=1,2,...,k-1$, and $U_{k}\subseteq U$. 
We will say that $U$ is an \emph{$S(2k)$-neighborhood of $x$} if there exist open sets $U_{i}$, $i=1,2,...,k$, 
such that $x\in U_{1}$, $\overline{U}_{i}\subset U_{i+1}$, for $i=1,2,...,k-1$, and $\overline{U}_{k}\subseteq U$. 

Let $n\in\bN^+$. When a set $U$ is an $S(n)$-neighborhood of a point $x$ and it is an open (closed) set in $X$, 
we will refer to it as open (closed) \emph{$S(n)$-neighborhood of $x$}. A set $U$ will be called $S(n)$-open 
($S(n)$-closed) if $U$ is open (closed) and there exists at least one point $x$ such that $U$ is an open (closed) 
$S(n)$-neighborhood of $x$. 
\end{definition}

\begin{remark}
We note that in what follows every $S(2k-1)$-open set $U$ in a space $X$, where $k\in\bN^+$, will be considered as a 
fixed chain of $k$ nonempty sets $U_{i}$, $i=1,2,...,k$, such that $\overline{U}_{i}\subset U_{i+1}$, for $i=1,2,...,k-1$, 
and $U_{k}\subseteq U$. (In fact, most of the time we will assume that $U_k=U$). Sometimes, when we need to refer to 
the first set $U_1$ in that chain we will use the notation $U(k)$, i.e. by definition $U(k)=U_1$.
\end{remark}

Now, using the terminology and notation introduced in Definition \ref{SnIG} it is easy to see that the following propositions 
are true.

\begin{proposition}\label{P1}
Let $X$ be a topological space, $x\in X$ and $k\in\bN^+$.

{\rm (a)} Every closed $S(2k-1)$-neighborhood of $x$ is a closed $S(2k)$-neighborhood of $x$.

{\rm (b)} Every $S(2k)$-neighborhood of $x$ contains a closed $S(2k)$-neighborhood of $x$; hence it contains a closed 
(and therefore an open) $S(2k-1)$-neighborhood of $x$. Thus, every $S(2k)$-neighborhood of $x$ is an 
$S(2k-1)$-neighborhood of $x$.

{\rm (c)} Every $S(2k+1)$-neighborhood of $x$ contains an open $S(2k+1)$-neighborhood of $x$; hence it contains an 
open (and therefore a closed) $S(2k)$-neighborhood of $x$.
Thus, every $S(2k+1)$-neighborhood of $x$ is an $S(2k)$-neighborhood of $x$.
\end{proposition}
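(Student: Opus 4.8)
The plan is to unwind Definition \ref{SnIG} in each part and exhibit the required chain of open sets explicitly, using only the elementary facts that the closure operator is monotone and that $\overline{V}\subseteq V'$ whenever $V'$ is open with $\overline V \subseteq V'$. For part (a), suppose $U$ is a closed $S(2k-1)$-neighborhood of $x$, witnessed by open sets $U_1,\dots,U_k$ with $x\in U_1$, $\overline{U}_i\subset U_{i+1}$ for $i<k$, and $U_k\subseteq U$. Since $U$ is closed and $U_k\subseteq U$, we get $\overline{U_k}\subseteq U$, so the same chain witnesses that $U$ is a closed $S(2k)$-neighborhood of $x$.

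For part (b), let $U$ be an $S(2k)$-neighborhood of $x$ via open sets $U_1,\dots,U_k$ with $\overline{U_k}\subseteq U$. First I would note that $\overline{U_k}$ is a closed set containing $U_k$, and the chain $U_1,\dots,U_k$ together with the final closed set $\overline{U_k}$ witnesses that $\overline{U_k}$ is a closed $S(2k)$-neighborhood of $x$ contained in $U$; this gives the first assertion. To get a closed $S(2k-1)$-neighborhood inside $U$, I would instead take the shortened chain $U_1,\dots,U_{k-1}$ and observe that $\overline{U_{k-1}}$ is closed, satisfies $\overline{U_i}\subset U_{i+1}$ for $i\le k-2$, and $\overline{U_{k-1}}\subseteq U_k\subseteq \overline{U_k}\subseteq U$; hence $\overline{U_{k-1}}$ is a closed $S(2k-1)$-neighborhood of $x$ inside $U$ (when $k=1$ this degenerates, but then $U$ contains $\overline{U_1}\supseteq \{x\}$ and one checks the base case directly — here an $S(1)$-neighborhood is just any set containing a neighborhood, so $U$ itself works). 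Since a closed $S(2k-1)$-neighborhood is in particular a set containing an open set containing $x$, passing to the interior of $\overline{U_{k-1}}$, or more simply using $U_{k-1}$ itself at the last step, yields an open $S(2k-1)$-neighborhood; and the last sentence of (b) follows because any witnessing chain for an $S(2k-1)$-neighborhood, with the last set $U_{k-1}$, is itself (trivially) inside $U$.

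For part (c), let $U$ be an $S(2k+1)$-neighborhood of $x$ via open sets $U_1,\dots,U_{k+1}$ with $x\in U_1$, $\overline{U_i}\subset U_{i+1}$ for $i\le k$, and $U_{k+1}\subseteq U$. Then $U_{k+1}$ itself is an open set, it satisfies the same chain condition, and it is contained in $U$, so $U_{k+1}$ is an open $S(2k+1)$-neighborhood of $x$ inside $U$. To extract an open $S(2k)$-neighborhood, drop the top link: the sets $U_1,\dots,U_k$ satisfy $\overline{U_i}\subset U_{i+1}$ for $i\le k-1$ and $\overline{U_k}\subset U_{k+1}\subseteq U$, so $U_{k+1}$ (or indeed $U_k$, taking $\overline{U_k}\subseteq U_{k+1}\subseteq U$) is an open $S(2k)$-neighborhood of $x$ contained in $U$; by part (b) it then contains a closed $S(2k)$-neighborhood. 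The final sentence of (c) follows since the chain $U_1,\dots,U_k$ with last set $U_k\subseteq U$ directly witnesses that $U$ is an $S(2k)$-neighborhood of $x$.

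None of this is deep; the only point requiring care is bookkeeping the off-by-one in the lengths of the chains (an $S(2k-1)$- and an $S(2k)$-neighborhood both use $k$ open sets, differing only in whether the closure of the last one is required to sit inside $U$), and checking the degenerate case $k=1$ separately so the phrase "shortened chain $U_1,\dots,U_{k-1}$" is interpreted correctly. I expect this minor indexing verification to be the only obstacle; otherwise each implication is an immediate consequence of monotonicity of closure and the definitions.
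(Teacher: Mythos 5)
Part (a) is correct, and the main line of (c) works, but part (b) contains a genuine error coming precisely from the off-by-one in chain lengths that you yourself flag at the end as the only delicate point. By Definition \ref{SnIG}, an $S(2k-1)$-neighborhood and an $S(2k)$-neighborhood of $x$ both require a chain of exactly $k$ open sets (they differ only in whether $U_k\subseteq U$ or $\overline{U}_k\subseteq U$); a chain of $k-1$ sets witnesses only an $S(2k-3)$- or $S(2k-2)$-neighborhood. So your claim that the shortened chain $U_1,\dots,U_{k-1}$ makes $\overline{U}_{k-1}$ a closed $S(2k-1)$-neighborhood of $x$ is wrong in general (it is a closed $S(2k-2)$-neighborhood), and likewise ``using $U_{k-1}$ itself at the last step'' yields only an $S(2k-3)$-neighborhood; the separate $k=1$ patch does not repair this, since there ``$U$ itself'' need not be closed. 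The fix is simpler than what you attempted and uses sets you already have: $\overline{U}_k$ is simultaneously a closed $S(2k)$-neighborhood of $x$ (chain $U_1,\dots,U_k$ with $\overline{U}_k\subseteq\overline{U}_k$) and a closed $S(2k-1)$-neighborhood (same chain, $U_k\subseteq\overline{U}_k$), the open set $U_k$ is an open $S(2k-1)$-neighborhood contained in $U$, and $U_k\subseteq\overline{U}_k\subseteq U$ shows directly that $U$ itself is an $S(2k-1)$-neighborhood; no case distinction at $k=1$ is needed.

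The same slip resurfaces twice in (c), though there it is harmless because your primary witness $U_{k+1}$ is correct: the parenthetical ``or indeed $U_k$'' is false, since $U_k$ need not contain $\overline{U}_k$ and so is not an $S(2k)$-neighborhood; and the last sentence of (c) is witnessed by $\overline{U}_k\subseteq U_{k+1}\subseteq U$, not by ``$U_k\subseteq U$'', which is the $S(2k-1)$ condition. For comparison, the paper gives no proof at all --- Proposition \ref{P1} is stated as immediate from Definition \ref{SnIG} --- and with the corrections above your argument reduces to exactly that routine verification.
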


\begin{proposition}\label{P2}
Let $X$ be a topological space and $k\in\bN^+$.

{\rm (a)} $X$ is an $S(2k-1)$-space if and only if every two distinct points of $X$ can be separated by disjoint (open) 
$S(2k-1)$-neighborhoods.

{\rm (b)} $X$ is an $S(2k)$-space if and only if every two distinct points of $X$ can be separated by disjoint closed 
$S(2k-1)$-neighborhoods.

{\rm (c)} $X$ is an $S(2k)$-space if and only if every two distinct points of $X$ can be separated by disjoint (closed) 
$S(2k)$-neighborhoods.

{\rm (d)} $X$ is an $S(2k+1)$-space if and only if every two distinct points of $X$ can be separated by disjoint open 
$S(2k)$-neighborhoods.
\end{proposition}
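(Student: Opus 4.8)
The plan is to establish all four equivalences by a single mechanism, applied four times with the appropriate parity bookkeeping. The relevant reformulation is: for distinct $x,y\in X$, the point $x$ is $S(n)$-separated from $\{y\}$ precisely when there is a chain of open sets $U_1,\dots,U_n$ with $x\in U_1$, $\overline{U}_i\subset U_{i+1}$ for $i<n$, and $y\notin\overline{U}_n$; and $X$ is an $S(n)$-space iff such a chain exists for every pair of distinct points. The computational heart is the elementary \emph{reversal} remark: if $V_1,\dots,V_m$ are open with $\overline{V}_i\subset V_{i+1}$ for $i<m$, then $X\setminus\overline{V}_m,\ X\setminus\overline{V}_{m-1},\ \dots,\ X\setminus\overline{V}_1$ is again such a chain, since $\overline{X\setminus\overline{V}_j}=X\setminus\Int(\overline{V}_j)\subseteq X\setminus V_j\subseteq X\setminus\overline{V}_{j-1}$, and moreover $y\in V_1$ implies $y\notin\overline{X\setminus\overline{V}_1}$.

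I would carry out (a) in full and then indicate the shifts needed for (b)--(d). For the forward direction of (a): if $X$ is an $S(2k-1)$-space and $x\ne y$, take an open chain $U_1,\dots,U_{2k-1}$ witnessing that $x$ is $S(2k-1)$-separated from $\{y\}$; then $U_1,\dots,U_k$ makes $U_k$ an open $S(2k-1)$-neighborhood of $x$, and the reversal applied to $U_k,\dots,U_{2k-1}$ makes $X\setminus\overline{U}_k$ an open $S(2k-1)$-neighborhood of $y$ (here $y\in X\setminus\overline{U}_{2k-1}$), these two being disjoint since $U_k\subseteq\overline{U}_k$. For the converse: given disjoint open $S(2k-1)$-neighborhoods of $x$ and $y$, replace them by the last terms $U_k$, $V_k$ of their witnessing chains $U_1,\dots,U_k$ and $V_1,\dots,V_k$ (still disjoint), and splice to form $U_1,\dots,U_k,\ X\setminus\overline{V}_{k-1},\dots,X\setminus\overline{V}_1$; this has length $2k-1$, its only nonobvious link is $\overline{U}_k\subseteq X\setminus\overline{V}_{k-1}$, which holds because $U_k\cap V_k=\varnothing$ gives $\overline{U}_k\subseteq X\setminus V_k\subseteq X\setminus\overline{V}_{k-1}$, and $y\notin\overline{X\setminus\overline{V}_1}$, so $x$ is $S(2k-1)$-separated from $\{y\}$ and $X$ is an $S(2k-1)$-space.

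Parts (b), (c) and (d) run through the same splice, now with chain lengths $2k$, $2k$ and $2k+1$, so that one endpoint carries an extra closure and the closure/interior step realizing disjointness shifts by one slot; the middle pair is $\overline{U}_k$ and $X\setminus\overline{U}_k$ when the length is $2k$, and $U_{k+1}$ and $X\setminus\overline{U}_{k+1}$ when it is $2k+1$. Here Proposition \ref{P1} streamlines two points. First, a closed set which is an $S(2k-1)$-neighborhood of $x$ is automatically a closed $S(2k)$-neighborhood of $x$ (from $U_k\subseteq C$ and $C$ closed we get $\overline{U}_k\subseteq C$), and conversely, so (b) and (c) concern exactly the same family of separating sets and (c) follows from (b) after using the shrinking in Proposition \ref{P1}(b). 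Second, an open $S(2k)$-neighborhood is already an $S(2k+1)$-neighborhood (append the set itself to its chain), while by Proposition \ref{P1}(c) every $S(2k+1)$-neighborhood contains an open $S(2k)$-neighborhood, so in (d) one may pass freely between the two and the length-$(2k+1)$ splice finishes the job.

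The only real obstacle I anticipate is the index bookkeeping: for each of the four parities one must pin down the exact length of the witnessing chains, locate the single closure/interior step that produces the disjointness, and confirm that the spliced chain terminates with the correct closure so that the excluded point indeed lies outside the last closure. Once the reversal remark and the two observations about closed and open $S(n)$-neighborhoods are recorded, each of (a)--(d) is a short, routine verification.
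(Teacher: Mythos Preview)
The paper does not actually give a proof of Proposition~\ref{P2}; it simply records, immediately before the statement, that ``using the terminology and notation introduced in Definition~\ref{SnIG} it is easy to see that the following propositions are true.'' So there is nothing to compare your argument against beyond the implicit claim that the result is a routine unpacking of the definitions.

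Your proposal supplies exactly such an unpacking, and it is correct. The reversal observation $\overline{X\setminus\overline{V}_j}\subseteq X\setminus\overline{V}_{j-1}$ is the right engine, and your splice argument for (a) is clean and complete (with the obvious degenerate reading when $k=1$, where the second half of the splice is empty and one only needs $y\notin\overline{U}_1$). Your reductions of (c) to (b) via Proposition~\ref{P1}(a)--(b), and of (d) to the case (a) with $k$ replaced by $k+1$ via the identification of open $S(2k)$-neighborhoods with open $S(2k+1)$-neighborhoods, are also valid and efficient.

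One small imprecision worth tightening: in your sketch for the forward direction when the chain has length $2k$, the phrase ``the middle pair is $\overline{U}_k$ and $X\setminus\overline{U}_k$'' does not literally yield two \emph{closed} $S(2k-1)$-neighborhoods, since $X\setminus\overline{U}_k$ is open. What works is to take $\overline{U}_k$ for $x$ and, on the $y$ side, the closure of the last term of the reversed chain, namely $\overline{X\setminus\overline{U}_{k+1}}\subseteq X\setminus\overline{U}_k$ (or equivalently $X\setminus U_{k+1}$); disjointness then follows from $\overline{U}_k\subseteq U_{k+1}$. This is exactly the ``shift by one slot'' you allude to, so the fix is already implicit in your plan.
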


\begin{definition}\label{SnClIG}
Let $X$ be a topological space, $A\subseteq X$ and $k\in\bN^+$. We will say that a point $x$ is in the $S(2k-1)$-closure 
of $A$ if and only if every (open) $S(2k-1)$-neighborhood of $x$ intersects $A$ and we will say that a point $x$ is in the 
$S(2k)$-closure of $A$ if and only if every (closed) $S(2k)$-neighborhood (or equivalently, every closed
$S(2k-1)$-neighborhood) of $x$ intersects $A$. For $n\in\bN^+$, the 
$S(n)$-closure of $A$ will be denoted by $\theta_n(A)$. $A$ is $\theta_{n}$\emph{-closed} if 
$\theta_{n}(A)=A$ and $U\subset X$ is $\theta_{n}$-\emph{open} if $X\setminus U$ is $\theta_{n}$-closed, or 
equivalently, $U\subset X$ is $\theta_{n}$-\emph{open} if $U$ is an $S(n)$-neighborhood of every $x\in U$.
Finally, $A$ is $\theta_n$-dense in $X$ if $\theta_n(A)=X$.
\end{definition}

Clearly, for every  $n\in\bN^+$, every $\theta_n$-open set is open and every set of the form $\theta_n(A)$, where 
$A\subseteq X$, is a closed set. Also, it follows directly from Definition \ref{SnClIG} that 
$\theta_1(A)=\cl(A)=\overline{A}$ is the usual closure operator in $X$ 
and $\theta_2(A)=\cl_\theta(A)$ is the $\theta$-closure operator introduced by Veli\v{c}ko \cite{Vel66}. We also note 
that, except for the case $n=1$, for many $A\subset X$ we may have $\theta_n(\theta_n(A))\ne\theta_n(A)$, or in other 
words, the $\theta_n$-closure operator is not idempotent. More information about the 
closure operator $\theta_n$ is contained in \cite{DikGiu88}, \cite{DikTho95} and \cite{DikWat95}.

\begin{definition}\label{DPsi} Let $k\in \bN^+$ and $X$ be a topological space.

\emph{(a)} A family $\{U_\alpha :\alpha < \kappa\}$ of open $S(2k-1)$-neighborhoods of a point $x\in X$ will be called 
\emph{an open $S(2k-1)$-neighborhood base at the point $x$} if for every open $S(2k-1)$-neighborhood $U$ of $x$ 
there is $\alpha<\kappa$ such that $U_\alpha\subseteq U$. 

\emph{(b)} An $S(2k-1)$-space $X$ is of \emph{$S(2k-1)$-character} $\kappa$, denoted by 
$\chi_{2k-1}(X)$, if $\kappa$ is the smallest infinite cardinal such that for each point $x\in X$ there exists an open  
$S(2k-1)$-neighborhood base at $x$ with cardinality at most $\kappa$. In the case $k=1$ the $S(1)$-character 
$\chi_1(X)$ coincides with the usual character $\chi(X)$.

\emph{(c)} An $S(2k)$-space $X$ is of \emph{$S(2k)$-character} $\kappa$, denoted by 
$\chi_{2k}(X)$, if $\kappa$ is the smallest infinite cardinal such that for each point $x\in X$ there exists a family $\sV_x$
of closed $S(2k-1)$-neighborhoods of $x$ such that $|\sV_x|\le\kappa$ and if $W$ is an open $S(2k-1)$-neighborhood of 
$x$, then $\overline{W}$ contains a member of $\mathcal{V}_x$. In the case $k=1$ the $S(2)$-character 
$\chi_2(X)$ coincides with the cardinal function $k(X)$ defined in \cite{AlaKoc00}.

\emph{(d)} An $S(k-1)$-space $X$ is of \emph{$S(2k-1)$-pseudocharacter} $\kappa$, denoted by 
$\psi_{2k-1}(X)$, if $\kappa$ is the smallest infinite cardinal such that for each point $x\in X$ there exists a family 
$\{U_\alpha :\alpha < \kappa\}$ of $S(2k-1)$-open neighborhoods of $x$ such that 
$\{x\}=\bigcap \{U_\alpha :\alpha < \kappa \}.$ In the case $k=1$ the pseudocharacter $\psi_1(X)$ coincides with 
the usual pseudocharacter $\psi(X)$.

\emph{(e)} An $S(k)$-space $X$ is of \emph{$S(2k)$-pseudocharacter} $\kappa$, 
denoted by $\psi_{2k}(X)$, if $\kappa$ is the smallest infinite cardinal such that for each point $x\in X$ there exists a 
family $\{U_\alpha :\alpha < \kappa \}$ of $S(2k-1)$-open neighborhoods of $x$ such that 
$\{x\}=\bigcap \{\overline{U}_\alpha :\alpha < \kappa\}.$ In the case $k=1$ the pseudocharacter $\psi_2(X)$ 
coincides with the closed pseudocharacter $\psi_{c}(X)$.
\end{definition}

In relation to Definition \ref{DPsi}(c) we recall that for a topological space $X$, $k(X)$ is the smallest infinite cardinal
$\kappa$ such that for each point $x\in X$, there is a
collection $\mathcal{V}_x$ of closed neighborhoods of $x$ such that
$|\mathcal{V}_x|\le \kappa$ and if $W$ is a neighborhood of $x$, 
then $\overline{W}$ contains a member of $\mathcal{V}_x$ \cite{AlaKoc00}.
Clearly, $k(X)\le\chi(X)$.
As it was noted in \cite{AlaKoc00}, $k(X)$ is equal to the character of the semiregularization of $X$. We also note 
that if $k\in\bN^+$, then $\psi_{2k-1}(X)\le\psi_{2k}(X)\le\psi_{2k+1}(X)\le\psi_{2k+2}(X)$ and 
$\chi_{2k}(X)\le\chi_{2k-1}(X)$, whenever they are defined.

\begin{remark}\label{RemPsi}
Since the $\theta$-closure operator is not idempotent, for each positive integer $n$, a different type of pseudocharacter 
could be defined by requiring each point $x\in X$ to have a family of open neighborhoods 
$\{U_\alpha :\alpha < \kappa\}$ such that 
$\{x\}=\bigcap\{\cl_\theta(\cl_\theta(\ldots\cl_\theta(U_\alpha)\ldots)):\alpha < \kappa\},$ where the 
$\theta$-closure operator is repeated $n$ times. We used the notation 
$\psi_{\theta^n}(X)$ in \cite{GotUnp3} to denote that pseudocharacter and it is not difficult to see that 
$\psi_{\theta^n}(X)$ and the pseudocharacter $\psi_n(X)$ defined in Definition \ref{DPsi} are different.
\end{remark}

\begin{definition}\label{D6}
Let $k\in \bN^+$ and $X$ be a topological space.

\emph{(a)} We shall call a subset $D$ of $X$ \emph{$S(2k-1)$-discrete} if for every $x\in D$, there is an 
open $S(2k-1)$-neighborhood $U$ of $x$ such that $U\cap D=\{x\}$, and we define the \emph{$S(2k-1)$-spread} of 
$X$, denoted by $s_{2k-1}(X)$, to be $\sup \{\left| D\right| :D$ is $S(2k-1)$-discrete subset of $X\}+\aleph_0$. 

\emph{(b)} We shall call a subset $D$ of $X$ \emph{$S(2k)$-discrete} if for every $x\in D$, there is an 
open $S(2k-1)$-neighborhood $U$ of $x$ such that $\overline{U}\cap D=\{x\}$, and we define the 
\emph{$S(2k)$-spread} of $X$, denoted by $s_{2k}(X)$, to be $\sup \{\left| D\right| :D$ is $S(2k)$-discrete subset of 
$X\}+\aleph_0$. 
\end{definition}

It follows immediately from Definition \ref{D6} that a set $D$ in a topological space $X$ is discrete if and only if $D$ is 
$S(1)$-discrete and a set $D$ is Urysohn-discrete if and only if $D$ is $S(2)$-discrete. Therefore $s_1(X)$ is the usual 
spread $s(X)$ and $s_2(X)$ is the Urysohn spread $Us(X)$ defined in \cite{Sch93}.

\begin{definition}\label{DCelIG}
Let $X$ be a topological space and $k\in \bN^+$. 

\emph{(a)} We shall call a family $\sU$ of pairwise disjoint non-empty 
$S(2k-1)$-open subsets of $X$ \emph{$S(2k-1)$-cellular} and we define the \emph{$S(2k-1)$-cellularity} of  
$X$, denoted by $c_{2k-1}(X)$, to be $\sup \{\left| \mathcal{U}\right| :\mathcal{U}$ is an 
$S(2k-1)$-cellular family in $X\}+\aleph_0$.

\emph{(b)} We shall call a family $\sU$ of non-empty $S(2k-1)$-open subsets 
of $X$ \emph{$S(2k)$-cellular} if for every distinct $U_1,U_2\in\sU$ we have 
$\overline{U}_1\cap\overline{U}_2=\emptyset$ and we define the \emph{$S(2k)$-cellularity} of a space 
$X$, denoted by $c_{2k}(X)$, to be $\sup \{\left| \mathcal{U}\right| :\mathcal{U}$ is an  
$S(2k)$-cellular family in $X\}+\aleph_0$.
\end{definition}

\begin{remark} Let $X$ be a topological space.

\emph{(a)} $c_1(X)$ coincides with the usual cellularity $c(X)$ of the space $X$;

\emph{(b)} $c_2(X)$ coincides with the Urysohn cellularity $Uc(X)$ of $X$ introduced in \cite{Sch92};

\emph{(c)} If $n, m\in\bN^+$ and $n<m$, then $c_n(X)\ge c_m(X)$. For example of a space $X$ such that 
$c_1(X)>c_2(X)$ see \cite[Example 4]{Sch92}.

\emph{(d)} If in Definition \ref{DCelIG}(b) we require $\sU$ to be a family of pairwise disjoint non-empty 
$S(2k)$-open subsets of $X$ (similar to \ref{DCelIG}(a)), then, according to Proposition \ref{P1}, we would get the 
definitions of $S(2k+1)$-cellular family and $S(2k+1)$-cellularity defined in \ref{DCelIG}(a).
\end{remark}

\section{Cardinal inequalities for $S(n)$-spaces}

Hajnal and Juh\'asz proved that if $X$ is a $T_1$-space, then $|X|\le 2^{s(X)\psi(X)}$, and if $X$ is a Hausdorff space, 
then $|X|\le 2^{c(X)\chi(X)}$ and $|X|\le 2^{2^{s(X)}}$ (see \cite{HajJuh67}, \cite{Juh80} or \cite{Hod84}). 
Schr\"oder in \cite{Sch93} sharpened the first two estimations by 
showing that if $X$ is a Hausdorff space, then $|X|\le 2^{Us(X)\psi_c(X)}$, and if $X$ is a Urysohn space, then 
$|X|\le 2^{Uc(X)\chi(X)}$, or, in our notation, $\left|X\right| \leq 2^{s_2(X)\psi_2(X)}$ and 
$\left| X\right| \leq 2^{c_2(X)\chi (X)}$.

Below we formulate and prove the counterpart of the above inequalities for $S(n)$-spaces, where $n$ is 
any positive integer. 
We begin with extending the two inequalities $\left| X\right| \leq 2^{c(X)\chi (X)}$  and 
$\left| X\right| \leq 2^{c_2(X)\chi (X)}$ to $S(n)$-spaces. 

The following lemma for $k=1$ was proved by Charlesworth (see \cite[Theorem 3.2]{Cha77} or 
\cite[Proposition 3.4]{Hod84}). 

\begin{lemma}\label{LChIG}
Let $k\in \bN^+$, $X$ be a topological space and $\kappa=c_{2k-1}(X)$. If $\sU$ is a family of 
$S(2k-1)$-open subsets of $X$, then there exists a subfamily $\sV$ of $\sU$ such that $|\sV|\le\kappa$ and 
$\bigcup\{U(k):U\in \sU\}\subseteq \theta_{2k-1}(\bigcup\{V:V\in \sV\})$.
\end{lemma}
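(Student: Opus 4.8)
The plan is to follow Charlesworth's argument for the case $k=1$, the one genuinely new ingredient being a device for merging two chains of open sets into a single $S(2k-1)$-open set. We construct $\sV\subseteq\sU$ by transfinite recursion. Put $\sV_0=\varnothing$. Given $\sV_\alpha$ (an increasing chain of subfamilies of $\sU$), if $\bigcup\{U(k):U\in\sU\}\subseteq\theta_{2k-1}(\bigcup\sV_\alpha)$ we stop and set $\sV=\sV_\alpha$; otherwise we pick $U^\alpha\in\sU$ and a point $x^\alpha\in U^\alpha(k)$ with $x^\alpha\notin\theta_{2k-1}(\bigcup\sV_\alpha)$. By Definition~\ref{SnClIG}, the latter means there is an \emph{open} $S(2k-1)$-neighborhood $W^\alpha$ of $x^\alpha$ with $W^\alpha\cap\bigcup\sV_\alpha=\varnothing$. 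Set $\sV_{\alpha+1}=\sV_\alpha\cup\{U^\alpha\}$, and take unions at limit stages.

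The crux is the chain-merging step. Since $x^\alpha\in U^\alpha(k)=U^\alpha_1$, the $S(2k-1)$-open set $U^\alpha$ comes (by our standing convention) with a chain of nonempty open sets $U^\alpha_1,\dots,U^\alpha_k$ satisfying $x^\alpha\in U^\alpha_1$, $\overline{U^\alpha_i}\subseteq U^\alpha_{i+1}$ and $U^\alpha_k\subseteq U^\alpha$; likewise $W^\alpha$ comes with a chain $W^\alpha_1,\dots,W^\alpha_k$ with $x^\alpha\in W^\alpha_1$, $\overline{W^\alpha_i}\subseteq W^\alpha_{i+1}$ and $W^\alpha_k\subseteq W^\alpha$. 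Let $G^\alpha_i=U^\alpha_i\cap W^\alpha_i$. Then $x^\alpha\in G^\alpha_1\subseteq G^\alpha_2\subseteq\dots\subseteq G^\alpha_k$, and from $\overline{G^\alpha_i}\subseteq\overline{U^\alpha_i}\cap\overline{W^\alpha_i}\subseteq U^\alpha_{i+1}\cap W^\alpha_{i+1}=G^\alpha_{i+1}$ we see that $G^\alpha_1,\dots,G^\alpha_k$ is a chain witnessing, via Definition~\ref{SnIG}, that the nonempty open set $G^\alpha:=G^\alpha_k$ is an open $S(2k-1)$-neighborhood of $x^\alpha$; in particular $G^\alpha$ is $S(2k-1)$-open. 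Moreover $G^\alpha\subseteq U^\alpha_k\cap W^\alpha_k\subseteq U^\alpha\cap W^\alpha$, so $G^\alpha\subseteq U^\alpha$ and $G^\alpha\cap\bigcup\sV_\alpha=\varnothing$.

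Finally I would bound the length of the recursion. If $\alpha<\beta$ are both non-terminal stages, then $U^\alpha\in\sV_{\alpha+1}\subseteq\sV_\beta$, so $G^\beta\cap U^\alpha\subseteq G^\beta\cap\bigcup\sV_\beta=\varnothing$, and since $G^\alpha\subseteq U^\alpha$ this yields $G^\alpha\cap G^\beta=\varnothing$. Hence the $G^\alpha$ (over the non-terminal stages) form a family of pairwise disjoint nonempty $S(2k-1)$-open sets, i.e.\ an $S(2k-1)$-cellular family, so there are at most $c_{2k-1}(X)=\kappa$ of them; in particular the recursion cannot run for $\kappa^+$ steps and must stop at some $\lambda$ with $|\lambda|\le\kappa$. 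Since $\sV_\lambda$ is built by adjoining at each non-terminal successor stage one element (which is new, because $A\subseteq\theta_{2k-1}(A)$ forces the $U^\alpha$ to be pairwise distinct), $|\sV|=|\sV_\lambda|\le\kappa$, and the stopping condition is precisely $\bigcup\{U(k):U\in\sU\}\subseteq\theta_{2k-1}(\bigcup\sV)$.

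The only step I expect to require real care is the chain-merging one: checking that the termwise intersections again form a chain (which rests on $\overline{A\cap B}\subseteq\overline{A}\cap\overline{B}$) and that the last term is genuinely $S(2k-1)$-open while still avoiding $\bigcup\sV_\alpha$; the remainder is the standard maximal-cellular-family bookkeeping underlying Charlesworth's $k=1$ proof. The degenerate case $\sU=\varnothing$ is handled by $\sV=\varnothing$, and one uses throughout that each $U(k)$ is nonempty by the convention on $S(2k-1)$-open sets fixed after Definition~\ref{SnIG}.
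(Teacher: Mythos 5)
Your proof is correct and rests on the same ideas as the paper's: refine each chosen member of $\sU$ by termwise intersection with a small $S(2k-1)$-neighborhood witnessing non-membership in the $\theta_{2k-1}$-closure, and bound the resulting pairwise disjoint family of nonempty $S(2k-1)$-open sets by $c_{2k-1}(X)$. The only difference is organizational: you run a greedy transfinite recursion whose length is bounded by the cellularity, whereas the paper takes a maximal $S(2k-1)$-cellular family of $S(2k-1)$-open subsets of members of $\sU$ via Zorn's lemma and derives the coverage by contradiction with maximality.
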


\begin{proof}
Let $\sW=\{W:W\ne\emptyset\text{ is $S(2k-1)$-open}, W\subseteq U, U\in \sU\}$. Using Zorn's lemma we can find 
a maximal $S(2k-1)$-cellular family $\sW'\subseteq\sW$. Then $|\sW'|\le\kappa$. For each $W\in\sW'$ we fix 
$U_W\in \sU$ such that $W\subseteq U_W$ and let $\sV=\{U_W:W\in\sW'\}$. Then $|\sV|\le\kappa$. Now, suppose that 
$\bigcup\{U(k):U\in \sU\}\subsetneq \theta_{2k-1}(\bigcup\{V:V\in \sV\})$. Then it is not difficult to see that there exist 
$U\in \sU$, $x\in U(k)\setminus \theta_{2k-1}(\bigcup\{V:V\in \sV\})$ and an 
$S(2k-1)$-open set $U_x$ such that $x\in U_x(k)\subset U(k)$, $U_x\subseteq U$ and 
$U_x\cap \bigcup\{V:V\in \sV\}=\emptyset$. Clearly, $U_x\in\sW$. Hence, $\{U_x\}\cup\sW'$ is an $S(2k-1)$-cellular 
family that properly contains $\sW'$ and therefore $\sW'$ is not maximal -- contradiction.
\end{proof}

The next lemma for $k=1$ was proved by Schr\"oder (see \cite[Lemma 7]{Sch92}). 

\begin{lemma}\label{LScIG}
Let $k\in \bN^+$, $X$ be a topological space and $\kappa=c_{2k}(X)$. If $\sU$ is a family of 
$S(2k-1)$-open subsets of $X$, then there exists a subfamily $\sV$ of $\sU$ such that $|\sV|\le\kappa$ and 
$\bigcup\{U(k):U\in\sU\}\subseteq\theta_{2k}(\bigcup\{\overline{V}:V\in \sV\})$.
\end{lemma}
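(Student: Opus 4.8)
The plan is to mimic the proof of Lemma \ref{LChIG}, replacing the $S(2k-1)$-cellular maximal family by an $S(2k)$-cellular one and adjusting the closure operator at the end from $\theta_{2k-1}$ to $\theta_{2k}$. First I would set $\sW=\{W:W\ne\emptyset\text{ is }S(2k-1)\text{-open},\ W\subseteq U\text{ for some }U\in\sU\}$, exactly as before, and use Zorn's lemma to extract a maximal $S(2k)$-cellular subfamily $\sW'\subseteq\sW$ — that is, a maximal subfamily whose members have pairwise disjoint \emph{closures}. By the definition of $c_{2k}(X)=\kappa$ we get $|\sW'|\le\kappa$. For each $W\in\sW'$ pick $U_W\in\sU$ with $W\subseteq U_W$ and put $\sV=\{U_W:W\in\sW'\}$, so $|\sV|\le\kappa$.

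The core of the argument is the contrapositive step: suppose $x\in U(k)$ for some $U\in\sU$ but $x\notin\theta_{2k}(\bigcup\{\overline V:V\in\sV\})$. By Definition \ref{SnClIG}, $x$ not being in the $S(2k)$-closure means there is a closed $S(2k-1)$-neighborhood (equivalently a closed $S(2k)$-neighborhood) of $x$ missing $\bigcup\{\overline V:V\in\sV\}$; I would unpack this to produce an $S(2k-1)$-open set $U_x$ with $x\in U_x(k)$, $U_x\subseteq U$, and $\overline{U_x}\cap\bigcup\{\overline V:V\in\sV\}=\emptyset$. The small technical point to get right here is that shrinking the witnessing chain of open sets around $x$ so that it lies inside $U$ (using $x\in U(k)=U_1$ and intersecting with the sets $U_i$ of $U$'s defining chain) preserves the $\overline{\,\cdot\,}$-disjointness from the $\overline V$'s — this works because the separating closed $S(2k)$-neighborhood of $x$ can be taken with a defining chain whose closures also avoid $\bigcup\{\overline V:V\in\sV\}$, so shrinking only helps. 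Then $U_x\in\sW$ and $\overline{U_x}$ is disjoint from $\overline{W}$ for every $W\in\sW'$ (since $\overline W\subseteq\overline{U_W}\subseteq\bigcup\{\overline V:V\in\sV\}$), so $\{U_x\}\cup\sW'$ is an $S(2k)$-cellular family strictly larger than $\sW'$, contradicting maximality. Hence $\bigcup\{U(k):U\in\sU\}\subseteq\theta_{2k}(\bigcup\{\overline V:V\in\sV\})$, as required.

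The main obstacle I anticipate is the bookkeeping in the contrapositive step, namely verifying carefully that ``$x$ is not in the $S(2k)$-closure of $\bigcup\overline V$'' yields an $S(2k-1)$-open $U_x$ whose \emph{closure} (not merely $U_x$ itself) misses $\bigcup\{\overline V:V\in\sV\}$, and simultaneously that $U_x$ can be chosen inside a prescribed $U\in\sU$ with $x\in U_x(k)$. Both requirements must hold at once, and one has to be slightly attentive because the $S(2k)$-neighborhood of $x$ that separates $x$ from $\bigcup\overline V$ has a chain of length $k$ ending in a set whose closure avoids $\bigcup\overline V$; taking $U_x$ to be that chain's $k$-th set (intersected appropriately with $U$'s chain) gives an $S(2k-1)$-open set with the desired closure property. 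Everything else is a verbatim translation of Charlesworth's argument as presented in Lemma \ref{LChIG}, with $\theta_{2k-1}$ and ``$S(2k-1)$-cellular'' replaced throughout by $\theta_{2k}$ and ``$S(2k)$-cellular''.
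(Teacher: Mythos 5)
Your proposal is correct and follows essentially the same route as the paper's proof: take a maximal $S(2k)$-cellular subfamily $\sW'$ of $\sW$ via Zorn's lemma, select $U_W\in\sU$ containing each $W\in\sW'$, and derive a contradiction with maximality from a point $x\in U(k)$ outside $\theta_{2k}(\bigcup\{\overline{V}:V\in\sV\})$; the intersection-of-chains construction of $U_x$ that you spell out is exactly the step the paper leaves as ``it is not difficult to see.''
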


\begin{proof}
Let $\sW=\{W:W\ne\emptyset\text{ is $S(2k-1)$-open}, W\subseteq U, U\in \sU\}$. Using Zorn's lemma we can find 
a maximal $S(2k)$-cellular family $\sW'\subseteq\sW$. Then $|\sW'|\le\kappa$. For each $W\in\sW'$ we fix 
$U_W\in \sU$ such that $W\subseteq U_W$ and let $\sV=\{U_W:W\in\sW'\}$. Then $|\sV|\le\kappa$. Now, suppose that 
$\bigcup\{U(k):U\in\sU\}\subsetneq \theta_{2k}(\bigcup\{\overline{V}:V\in \sV\})$. Then it is not difficult to see that 
there exist $U\in \sU$, $x\in U(k)\setminus \theta_{2k}(\bigcup\{\overline{V}:V\in \sV\})$ and an 
$S(2k-1)$-open set $U_x$ such that $x\in U_x(k)\subseteq U(k)$, $U_x\subseteq U$, and 
$\overline{U}_x\cap \bigcup\{\overline{V}:V\in \sV\}=\emptyset$. Clearly, $U_x\in\sW$. Hence,
$\{U_x\}\cup\sW'$ is an $S(2k)$-cellular 
family that properly contains $\sW'$ and therefore $\sW'$ is not maximal -- contradiction.
\end{proof}

The following two theorems, which proof is based on Lemma \ref{LChIG} and Lemma \ref{LScIG}, give an upper bound 
for the cardinality of an $S(n)$-space, where $n\in\bN^+$, as a function of the $S(n)$-cellularity and the character of the 
space. The inequality for Hausdorff spaces (the case $k=1$ in Theorem \ref{ThGHJ2}) was proved by Hajnal and Juh\'asz 
(see \cite{HajJuh67}, \cite[Theorem 2.15(b)]{Juh80} or \cite[Theorem 2]{Pol74}) and for Urysohn spaces (the case $k=1$ 
in Theorem \ref{ThGSc2}) was proved by Schr\"oder (see \cite[Theorem 9]{Sch93}). 

\begin{theorem}\label{ThGHJ2}
Let $k\in \bN^+$. If $X$ is an $S(2k-1)$-space, then $\left| X\right| \leq 2^{c_{2k-1}(X)\chi_{2k-1}(X)}$.
\end{theorem}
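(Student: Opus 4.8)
The plan is to mimic the classical Hajnal--Juh\'asz closing-off argument, but with all closures replaced by the operator $\theta_{2k-1}$ and all neighborhoods replaced by $S(2k-1)$-neighborhoods. Set $\kappa=c_{2k-1}(X)\chi_{2k-1}(X)$. For each $x\in X$ fix an open $S(2k-1)$-neighborhood base $\mathcal{B}_x$ at $x$ with $|\mathcal{B}_x|\le\kappa$; we may assume each member of $\mathcal{B}_x$ comes equipped with its defining chain $U_1\subset\overline{U}_1\subset U_2\subset\cdots\subset U_k=U$. First I would build, by transfinite recursion of length $\kappa^+$, an increasing chain of subsets $A_\alpha\subseteq X$ with $|A_\alpha|\le 2^\kappa$ as follows. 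Start with any $A_0$ of size $\le 2^\kappa$. At a successor stage, having $A_\alpha$, consider all subfamilies $\mathcal{U}$ of $\bigcup\{\mathcal{B}_x : x\in A_\alpha\}$; for each such $\mathcal{U}$ whose union has $\theta_{2k-1}$-closure not equal to $X$, use Lemma~\ref{LChIG} to extract a subfamily $\mathcal{V}\subseteq\mathcal{U}$ with $|\mathcal{V}|\le c_{2k-1}(X)\le\kappa$ and $\bigcup\{U(k):U\in\mathcal{U}\}\subseteq\theta_{2k-1}(\bigcup\mathcal{V})$, and pick one point of $X\setminus\theta_{2k-1}(\bigcup\mathcal{V})$ to throw into $A_{\alpha+1}$; also add a point witnessing $\theta_{2k-1}(\bigcup\mathcal{V})\ne X$ directly when that set misses some point. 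Since $|A_\alpha|\le 2^\kappa$, there are at most $(2^\kappa)^\kappa\cdot 2^{2^\kappa}=2^\kappa$ many such subfamilies, so $|A_{\alpha+1}|\le 2^\kappa$; take unions at limits. Let $A=\bigcup_{\alpha<\kappa^+}A_\alpha$; then $|A|\le 2^\kappa$ and $A$ is ``closed'' under the operation just described because $\kappa^+$ is regular and every subfamily of $\bigcup\{\mathcal{B}_x:x\in A\}$ of size $\le\kappa$ already appears at some stage $<\kappa^+$.

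Next I would show $A=X$, which finishes the proof since $|X|=|A|\le 2^{c_{2k-1}(X)\chi_{2k-1}(X)}$. Suppose not, and pick $p\in X\setminus A$. For each $x\in A$, since $X$ is an $S(2k-1)$-space, by Proposition~\ref{P2}(a) there are disjoint open $S(2k-1)$-neighborhoods separating $x$ and $p$, so there is $V_x\in\mathcal{B}_x$ with $p\notin\theta_{2k-1}(V_x)$ — here I use that an open $S(2k-1)$-neighborhood $V_x=V_1\subset\cdots\subset V_k$ of $x$ disjoint from an open $S(2k-1)$-neighborhood $W=W_1\subset\cdots\subset W_k$ of $p$ forces $\overline{V}_{k-1}$ to be inside $X\setminus W_1$... more carefully, I want $p$ to have an $S(2k-1)$-neighborhood missing $V_x(k)$, i.e. $p\notin\theta_{2k-1}(V_x(k))$; the disjoint-separation property of $S(2k-1)$-spaces, unwound through the two chains, delivers exactly this (this is the one place the $S(2k-1)$ hypothesis enters, and it is the analogue of ``Hausdorff $\Rightarrow$ points separated from closures''). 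Now apply Lemma~\ref{LChIG} to $\mathcal{U}=\{V_x:x\in A\}$: there is $\mathcal{V}\subseteq\mathcal{U}$ with $|\mathcal{V}|\le\kappa$ and $A\subseteq\bigcup\{V_x(k):x\in A\}\subseteq\theta_{2k-1}(\bigcup\mathcal{V})$. The subfamily $\mathcal{V}$, being of size $\le\kappa$, was considered at some stage of the recursion; since $\theta_{2k-1}(\bigcup\mathcal{V})\supseteq A\ni$ (all of $A$) but our construction would have added to $A$ a point outside $\theta_{2k-1}(\bigcup\mathcal{V})$ unless that set is all of $X$, we get $\theta_{2k-1}(\bigcup\mathcal{V})=X$. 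But each $V\in\mathcal{V}$ equals some $V_x$ with $p\notin\theta_{2k-1}(V_x)$; I would then argue $p\notin\theta_{2k-1}(\bigcup\mathcal{V})$. This last inference is the subtle point: $\theta_{2k-1}$ is not idempotent, and in general $\theta_{2k-1}(\bigcup\mathcal{V})$ need not equal $\bigcup\theta_{2k-1}(V)$ nor even be contained in it.

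The main obstacle, then, is handling non-idempotency in that final contradiction. The fix is to be careful about \emph{which} chain-link we close off. Rather than tracking $V_x(k)$ and closing once, I would, at stage $\alpha+1$ of the recursion, apply Lemma~\ref{LChIG} not to the sets themselves but so as to control $\bigcup\{V_x(k):\ldots\}$ inside a single $\theta_{2k-1}$ of a union of the full $V_x$'s, and symmetrically arrange that $p$ sees an open $S(2k-1)$-neighborhood $W_p$ with $W_p\cap\bigcup\mathcal{V}=\emptyset$ — which is possible because $W_p$ can be chosen inside $\bigcap$ of the complements of finitely... no: here I use that $p\notin\theta_{2k-1}(V_x)$ means there is an $S(2k-1)$-neighborhood $W_{p,x}$ of $p$ disjoint from $V_x$, but I need one $W_p$ working for all $V\in\mathcal{V}$ simultaneously. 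That is exactly what Lemma~\ref{LChIG} is designed to circumvent: the lemma already gives $\bigcup\{V(k):V\in\mathcal{U}\}\subseteq\theta_{2k-1}(\bigcup\mathcal{V})$, so if $p\notin\bigcup\{V(k):V\in\mathcal{U}\}$... but $p\notin A$ and $A\subseteq\bigcup\{V_x(k)\}$ — so $p$ could still be in some $V_x(k)$. To rule that out I shrink: replace each $V_x$ by an $S(2k-1)$-neighborhood $V_x'$ of $x$ with $\overline{V_x'(k)}$ disjoint from a fixed neighborhood of $p$; whether such a uniform choice exists is the crux. The clean resolution is to run the closing-off on the family $\{V_x : x\in A\}$ directly and note that the point added by the construction, being outside $\theta_{2k-1}(\bigcup\mathcal{V})$, cannot be $p$ once we verify $p\notin\theta_{2k-1}(\bigcup\mathcal{V})$ using that $\theta_{2k-1}(\bigcup\mathcal{V})$ is a closed set and $p$ has an $S(2k-1)$-neighborhood $W_p$ meeting $\bigcup\mathcal{V}$ in a set whose union's $\theta_{2k-1}$-closure avoids $p$ — and I expect the author obtains this by choosing, for the recursion, the family $\mathcal{V}$ together with a single point $x_{\mathcal V}\in X\setminus\theta_{2k-1}(\bigcup\mathcal{V})$ whenever the latter is nonempty, so that at the end $p\notin A$ forces $\theta_{2k-1}(\bigcup\mathcal{V})=X$ for the $\mathcal{V}$ produced from $\{V_x\}$, contradicting $p\notin\theta_{2k-1}(\bigcup\mathcal{V})$, where the latter holds because $W_p$ witnesses $p\notin\theta_{2k-1}(V_x)$ for \emph{each} $x$ and $\bigcup\mathcal{V}$ is a subset of $\bigcup\{V_x\}$ whose every member is avoided by $W_p$'s associated neighborhood of $p$ — so I would phrase the disjointness condition on $W_p$ as ``$W_p(k)$ disjoint from $\bigcup\mathcal V$'', which is achievable since $\mathcal V$ has size $\le\kappa\le\chi_{2k-1}(X)$... this is where the real work lies, and I would want to state a small separation lemma beforehand guaranteeing that in an $S(2k-1)$-space, if $|\mathcal{V}|\le\chi_{2k-1}(X)$ and $p\notin\theta_{2k-1}(V)$ for each $V\in\mathcal V$, then $p\notin\theta_{2k-1}(\bigcup\mathcal V)$, or rather build that containment into the recursion itself so the contradiction is immediate.
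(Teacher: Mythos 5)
Your set-up (a closing-off recursion of length $\kappa^+$ with $|A_\alpha|\le 2^\kappa$, plus Lemma~\ref{LChIG} to cut families down to size $\kappa$) matches the paper's, but the argument has a genuine gap exactly at the point you flag and never repair: the passage from ``$p\notin\theta_{2k-1}(V_x)$ for each $x\in A$'' to ``$p\notin\theta_{2k-1}(\bigcup\sV)$''. The ``small separation lemma'' you float at the end is false already for $k=1$: in $\mathbb{R}$ the point $0$ lies outside the closure of each interval $(1/(n+1),1/n)$ but inside the closure of their union, so even in a first countable Hausdorff space a point can avoid $\theta_1(V)=\overline{V}$ for each of $\le\chi(X)$ many open sets $V$ while lying in the closure of their union. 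Since $\theta_{2k-1}$ of a union cannot be controlled by the $\theta_{2k-1}$-closures of its members, your final contradiction does not go through, and your recursion, which closes off only under a \emph{single} set of the form $\theta_{2k-1}(\bigcup\sV)$, is also too weak to host the repair, which necessarily produces $\kappa$ many such closures at once.

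The paper's fix is the classical grouping trick, which uses the character at the outside point (your $p$, the paper's $y$) instead of any separation lemma: fix $\sB(y)=\{B_\delta:\delta<\kappa\}$ and, for each $\delta<\kappa$, let $\sV_\delta$ be the family of all basic $V\in\sB(x)$ with $x\in A$ and $V\cap B_\delta=\emptyset$; the $S(2k-1)$-separation of $x$ from $y$ together with the base property at both points gives $A\subseteq\bigcup_{\delta<\kappa}\bigcup\{V(k):V\in\sV_\delta\}$. Applying Lemma~\ref{LChIG} to each $\sV_\delta$ separately yields $\sW_\delta\subseteq\sV_\delta$ with $|\sW_\delta|\le\kappa$ and $\bigcup\{V(k):V\in\sV_\delta\}\subseteq\theta_{2k-1}(\bigcup\sW_\delta)$, and now $y\notin\theta_{2k-1}(\bigcup\sW_\delta)$ is immediate, because the single open $S(2k-1)$-neighborhood $B_\delta$ of $y$ is disjoint from the whole set $\bigcup\sW_\delta$ --- no inference from individual closures to the closure of a union is ever needed. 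Correspondingly, the recursion must be strengthened so that at each stage one adds a point outside $\bigcup_{\gamma<\kappa}\theta_{2k-1}(\sV_\gamma)$ for \emph{every} $\kappa$-indexed family $\{\sV_\gamma:\gamma<\kappa\}$ whose members are unions of $\le\kappa$ basic sets already collected (there are still only $2^\kappa$ such families, so the bound $|A_\alpha|\le 2^\kappa$ survives); with that change one finds $\alpha<\kappa^+$ with $\bigcup_{\delta<\kappa}\sW_\delta\subseteq\sU_\alpha$ and obtains a point $x\in A_\alpha$ outside $\bigcup_{\delta<\kappa}\theta_{2k-1}(\bigcup\sW_\delta)$, contradicting $A\subseteq\bigcup_{\delta<\kappa}\theta_{2k-1}(\bigcup\sW_\delta)$.
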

\begin{proof}
Let $k\in \bN^+$, $\kappa=c_{2k-1}(X)\chi_{2k-1}(X)$ and for every $x\in X$ let $\sB(x)$ be an open 
$S(2k-1)$-neighborhood base at $x$ such that $|\sB(x)|\le\kappa$.  
By transfinite recursion we construct an increasing sequence 
$(A_\alpha)_{\alpha<\kappa^+}$ of subsets of $X$ and a sequence of families of open sets 
$(\sU_\alpha)_{\alpha<\kappa^+}$ as follows:
\begin{itemize}
\item[(a)] $\sU_0=\emptyset$ and $A_0=\{x_0\}$, where $x_0$ is an arbitrary point of $X$;
\item[(b)] $|A_\alpha|\le\ 2^\kappa$ for every $\alpha<\kappa^+$;
\item[(c)] $\sU_\alpha=\bigcup\{\sB(x):x\in\bigcup_{\beta < \alpha}A_\beta\}$, for every $0<\alpha<\kappa^+$; 
\item[(e)] For $0<\alpha<\kappa^+$, if $\sW=\{\sV_\gamma:\gamma<\kappa\}$ is a family of subsets of $X$ such that 
each $\sV_\gamma$ is the union 
of at most $\kappa$ many elements of $\sU_\alpha$ and 
$\bigcup_{\gamma<\kappa}\theta_{2k-1}(\sV_\gamma)\ne X$, then we pick a point 
$x_\sW\in X\setminus\bigcup_{\gamma<\kappa}\theta_{2k-1}(\sV_\gamma)$. Let $E$ be the set of all such points 
$x_\sW$. Then we set $A_{\alpha}=E\cup\bigcup_{\beta<\alpha}A_\beta$. Since 
$|\bigcup_{\beta<\alpha}A_\beta|\le 2^\kappa$, we have $|\sU_\alpha|\le 2^\kappa$ and therefore 
$|E|\le ((2^\kappa)^\kappa)^\kappa=2^\kappa$. Thus $|A_\alpha|\le 2^\kappa$ and therefore (b) is satisfied.
\end{itemize}

Now, let $A=\bigcup_{\alpha<\kappa^+}A_\alpha$. Since $|A|\le\kappa^+\cdot 2^\kappa=2^\kappa$, to 
finish the proof it is sufficient to show that $X=A$. Suppose that 
there exists a point $y\in X\setminus A$ and let $\sB(y)=\{B_\delta:\delta<\kappa\}$. For each $\delta<\kappa$ let 
$\sV_\delta=\{V:V\in \sB(x), x\in A, B_\delta\cap V=\emptyset\}$. Since $X$ is an 
$S(2k-1)$-space and $\sB(x)$, for each $x\in X$, is an open $S(2k-1)$-neighborhood base at $x$, 
$A\subseteq\bigcup_{\delta<\kappa}\bigcup\{V(k):V\in\sV_\delta\}$. It follows from Lemma \ref{LChIG} that for every 
$\delta<\kappa$ we can find a subfamily $\sW_\delta$ of $\sV_\delta$ such that $|\sW_\delta|\le\kappa$ and 
$\bigcup\{V(k):V\in\sV_\delta\}\subseteq\theta_{2k-1}(\bigcup\sW_\delta)$. Using the fact that 
$|\bigcup_{\delta<\kappa}\sW_\delta|\le\kappa$ and that $\kappa^+$ is a regular cardinal we can find 
$\alpha<\kappa^+$ such that $\bigcup_{\delta<\kappa}\sW_\delta\subseteq \sU_\alpha$. Since for each $\delta<\kappa$
we have $B_\delta\cap\bigcup\{W:W\in\sW_\delta\}=\emptyset$, 
$y\notin\bigcup_{\delta<\kappa}\theta_{2k-1}(\bigcup\sW_\delta)$. Then it follows from the 
construction of the set $A_\alpha$ that there exists 
$x\in A_\alpha\cap \left(X\setminus \bigcup_{\delta<\kappa}\theta_{2k-1}(\bigcup\sW_\delta)\right)$.
Thus $x\in A_\alpha\subseteq A\subseteq\bigcup_{\delta<\kappa}\bigcup\{V(k):V\in\sV_\delta\}\subseteq\bigcup_{\delta<\kappa}\theta_{2k-1}(\bigcup\sW_\delta)$ -- contradiction.
\end{proof}

\begin{theorem}\label{ThGSc2}
Let $k\in \bN^+$. If $X$ is an $S(2k)$-space, then $\left| X\right| \leq 2^{c_{2k}(X)\chi_{2k}(X)}$.
\end{theorem}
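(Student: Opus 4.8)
The plan is to run the same Hajnal--Juh\'asz-style closing-off construction used in the proof of Theorem \ref{ThGHJ2}, but with the $S(2k)$-machinery replacing the $S(2k-1)$-machinery at every step: Lemma \ref{LScIG} in place of Lemma \ref{LChIG}, the $\theta_{2k}$-closure in place of $\theta_{2k-1}$, and the closed $S(2k-1)$-neighborhood families guaranteed by $\chi_{2k}(X)$ (Definition \ref{DPsi}(c)) in place of the open $S(2k-1)$-neighborhood bases. Set $\kappa=c_{2k}(X)\chi_{2k}(X)$. For each $x\in X$ fix a family $\sV_x$ of closed $S(2k-1)$-neighborhoods of $x$ with $|\sV_x|\le\kappa$ such that the closure of every open $S(2k-1)$-neighborhood of $x$ contains a member of $\sV_x$. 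By transfinite recursion build an increasing chain $(A_\alpha)_{\alpha<\kappa^+}$ with $|A_\alpha|\le 2^\kappa$, where $\sU_\alpha$ collects the chains underlying the closed neighborhoods in $\bigcup\{\sV_x:x\in\bigcup_{\beta<\alpha}A_\beta\}$, and at stage $\alpha$ we add, for each family $\sW=\{\sV_\gamma:\gamma<\kappa\}$ whose members are unions of at most $\kappa$ elements of $\{\overline V:V\in\sU_\alpha\}$ with $\bigcup_{\gamma<\kappa}\theta_{2k}(\sV_\gamma)\ne X$, a witnessing point outside that union. The cardinality bookkeeping is identical to Theorem \ref{ThGHJ2}: $|E|\le((2^\kappa)^\kappa)^\kappa=2^\kappa$, so $|A|\le 2^\kappa$ for $A=\bigcup_{\alpha<\kappa^+}A_\alpha$.

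It then suffices to show $X=A$. Suppose $y\in X\setminus A$. Since $X$ is an $S(2k)$-space, by Proposition \ref{P2}(b) every point $x\in A$ can be separated from $y$ by disjoint closed $S(2k-1)$-neighborhoods; pushing this down through the fixed family $\sV_x$, for each $x\in A$ there is an open $S(2k-1)$-neighborhood $W$ of $x$ and a closed $S(2k-1)$-neighborhood $C$ of $y$ with $\overline W\cap C=\emptyset$. Enumerate a family of closed $S(2k-1)$-neighborhoods of $y$ of size $\kappa$ coming from $\sV_y$, say indexed by $\delta<\kappa$, and for each $\delta$ let $\sV_\delta$ be the family of those open $S(2k-1)$-neighborhoods $V$ (with underlying chain) centered at points of $A$ whose closure misses the $\delta$-th closed neighborhood of $y$; then $A\subseteq\bigcup_{\delta<\kappa}\bigcup\{V(k):V\in\sV_\delta\}$. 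Apply Lemma \ref{LScIG} to each $\sV_\delta$ to extract $\sW_\delta\subseteq\sV_\delta$ with $|\sW_\delta|\le\kappa$ and $\bigcup\{V(k):V\in\sV_\delta\}\subseteq\theta_{2k}(\bigcup\{\overline V:V\in\sW_\delta\})$. By regularity of $\kappa^+$ find $\alpha<\kappa^+$ with $\bigcup_{\delta<\kappa}\sW_\delta\subseteq\sU_\alpha$; since the $\delta$-th closed neighborhood of $y$ is disjoint from every $\overline W$ with $W\in\sW_\delta$, we get $y\notin\bigcup_{\delta<\kappa}\theta_{2k}(\bigcup\{\overline V:V\in\sW_\delta\})$, so the recursion placed a point $x\in A_\alpha\setminus\bigcup_{\delta<\kappa}\theta_{2k}(\bigcup\{\overline V:V\in\sW_\delta\})$ into $A$, contradicting $A\subseteq\bigcup_{\delta<\kappa}\theta_{2k}(\bigcup\{\overline V:V\in\sW_\delta\})$.

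The step I expect to require the most care is the passage from the abstract $S(2k)$-separation of $y$ from a point $x\in A$ to a covering of $A$ by the $V(k)$'s of neighborhoods drawn from the preassigned families $\sV_x$ and $\sV_y$, i.e.\ verifying that $\chi_{2k}(X)$ as defined in Definition \ref{DPsi}(c)—which only controls closures of open $S(2k-1)$-neighborhoods—does interact correctly with the disjoint-closed-neighborhood characterization of Proposition \ref{P2}(b). Concretely, one must check that when $\overline W\cap C=\varnothing$ with $W$ an open $S(2k-1)$-neighborhood of $x$, one may replace $W$ by a member of $\sV_x$ sitting inside $\overline W$ while still keeping disjointness from a member of $\sV_y$ inside $C$; this is where the defining property of $\sV_x$ (that $\overline W$ contains some member of $\sV_x$) and the fact that such a member is itself an $S(2k-1)$-neighborhood with its own chain $V(k)$ must be used, together with Proposition \ref{P1}(a),(b) to move freely between closed $S(2k-1)$- and $S(2k)$-neighborhoods. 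Everything else is a routine transcription of the proof of Theorem \ref{ThGHJ2}.
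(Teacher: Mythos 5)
Your proposal is correct and follows essentially the same route as the paper's proof: the identical Hajnal--Juh\'asz closing-off recursion with $\theta_{2k}$-closures, Lemma \ref{LScIG} in place of Lemma \ref{LChIG}, and the $\chi_{2k}$-families from Definition \ref{DPsi}(c) supplying, via the $S(2k)$-separation of $y$ from points of $A$, the disjoint closed neighborhoods needed for the covering and for the final contradiction. The only difference is bookkeeping (you carry the closed members of $\sV_x$ with their underlying chains, while the paper records open $S(2k-1)$-neighborhoods and takes closures), and the delicate point you flag is resolved exactly as you indicate.
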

\begin{proof}
Let $n\in \mathbb{N}$, $\kappa=c_{\theta ^n}(X)\chi_{2k}(X)$ and for every $x\in X$ let $\sB(x)$ be a family of open 
$S(2k-1)$-neighborhoods of $x$ witnessing the fact that $\chi_{2k}(X)\le\kappa$ (hence, $|\sB(x)|\le\kappa$).  
By transfinite recursion we construct an increasing sequence 
$(A_\alpha)_{\alpha<\kappa^+}$ of subsets of $X$ and a sequence of families of open sets 
$(\sU_\alpha)_{\alpha<\kappa^+}$ as follows:
\begin{itemize}
\item[(a)] $\sU_0=\emptyset$ and $A_0=\{x_0\}$, where $x_0$ is an arbitrary point of $X$;
\item[(b)] $|A_\alpha|\le\ 2^\kappa$ for every $\alpha<\kappa^+$;
\item[(c)] $\sU_\alpha=\bigcup\{\sB(x):x\in\bigcup_{\beta < \alpha}A_\beta\}$, for every $0<\alpha<\kappa^+$; 
\item[(e)] For $0<\alpha<\kappa^+$, if $\sW=\{\sV_\gamma:\gamma<\kappa\}$ is a family of subsets of $X$ such that 
each $\sV_\gamma$ is the union of the closures of at most $\kappa$ many elements of $\sU_\alpha$ and 
$\bigcup_{\gamma<\kappa}\theta_{2k}(\sV_\gamma)\ne X$, then we pick a point 
$x_\sW\in X\setminus\bigcup_{\gamma<\kappa}\theta_{2k}(\sV_\gamma)$. Let $E$ be the set of all such points 
$x_\sW$. Then we set $A_{\alpha}=E\cup\bigcup_{\beta<\alpha}A_\beta$. Since 
$|\bigcup_{\beta<\alpha}A_\beta|\le 2^\kappa$, 
we have $|\sU_\alpha|\le 2^\kappa$ and therefore $|E|\le ((2^\kappa)^\kappa)^\kappa=2^\kappa$. 
Thus $|A_\alpha|\le 2^\kappa$ and therefore (b) is satisfied.
\end{itemize}

Now, let $A=\bigcup_{\alpha<\kappa^+}A_\alpha$. Since $|A|\le\kappa^+\cdot 2^\kappa=2^\kappa$, to 
finish the proof it is sufficient to show that $X=A$. Suppose that 
there exists a point $y\in X\setminus A$ and let $\sB(y)=\{B_\delta:\delta<\kappa\}$. For each $\delta<\kappa$ let 
$\sV_\delta=\{V:V\in \sB(x), x\in A, \overline{B}_\delta\cap\overline{V}=\emptyset\}$. Since $X$ is an 
$S(2k)$-space and $\sB(x)$, for each $x\in X$, is a closed $S(2k-1)$-neighborhood base at $x$, 
$A\subseteq\bigcup_{\delta<\kappa}\bigcup\{V(k):V\in\sV_\delta\}$. It follows from Lemma \ref{LScIG} that for every 
$\delta<\kappa$ we can find a subfamily $\sW_\delta$ of $\sV_\delta$ such that $|\sW_\delta|\le\kappa$ and 
$\bigcup\{V(k):V\in\sV_\delta\}\subseteq\theta_{2k}\left(\bigcup\{\overline{W}:W\in\sW_\delta\}\right)$. Using the fact that 
$|\bigcup_{\delta<\kappa}\sW_\delta|\le\kappa$ and that $\kappa^+$ is a regular cardinal we can find 
$\alpha<\kappa^+$ such that $\bigcup_{\delta<\kappa}\sW_\delta\subseteq \sU_\alpha$. Since for each $\delta<\kappa$
we have $\overline{B}_\delta\cap\bigcup\{\overline{W}:W\in\sW_\delta\}=\emptyset$, 
$y\notin\bigcup_{\delta<\kappa}\theta_{2k}\left(\bigcup\{\overline{W}:W\in\sW_\delta\}\right)$. Then it follows from the 
construction of the set $A_\alpha$ that there exists 
$x\in A_\alpha\cap \left(X\setminus \bigcup_{\delta<\kappa}\theta_{2k}\left(\bigcup\{\overline{W}:W\in\sW_\delta\}\right)\right)$.
Thus $x\in A_\alpha\subseteq A\subseteq\bigcup_{\delta<\kappa}\bigcup\{V(k):V\in\sV_\delta\}\subseteq\bigcup_{\delta<\kappa}\theta_{2k}\left(\bigcup\{\overline{W}:W\in\sW_\delta\}\right)$ -- contradiction.
\end{proof}

The case $k=1$ in the above theorem improves Schr\"oder's inequality \cite[Theorem 9]{Sch92} and is the second part 
of Theorem 2 given in \cite{AlaKoc00} without a proof. We write it below with more familiar notation.

\begin{corollary}\label{CIGSch}
If $X$ is a Urysohn space, then $\left| X\right| \leq 2^{Uc(X)k(X)}$.
\end{corollary}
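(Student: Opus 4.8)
The plan is to recognize Corollary \ref{CIGSch} as simply the instance $k=1$ of Theorem \ref{ThGSc2}, translated into classical terminology, so that no genuinely new argument is required. First I would observe that when $k=1$ the relevant $S(n)$-neighborhoods degenerate to familiar objects: an $S(1)$-neighborhood of $x$ is just an ordinary neighborhood of $x$ (the chain $U_1$ consists of a single open set with $x\in U_1\subseteq U$), an $S(1)$-open set is an ordinary open set, and $U(1)=U_1=U$ itself. Consequently the $S(2)$-cellularity $c_2(X)$ coincides with the Urysohn cellularity $Uc(X)$ of Schr\"oder (as already noted in the Remark after Definition \ref{DCelIG}(b)), and the $S(2)$-character $\chi_2(X)$ coincides with the cardinal function $k(X)$ of Alas and Ko\v{c}inac (as noted in Definition \ref{DPsi}(c)). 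An $S(2)$-space is precisely a Urysohn space by the remarks following Definition \ref{D1}.

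With these identifications in hand, the second step is to invoke Theorem \ref{ThGSc2} with $k=1$: if $X$ is an $S(2)$-space, then $|X|\le 2^{c_2(X)\chi_2(X)}$. Rewriting each cardinal function via the dictionary above yields $|X|\le 2^{Uc(X)k(X)}$ for every Urysohn space $X$, which is exactly the assertion of Corollary \ref{CIGSch}. Thus the proof is a one-line deduction: \emph{apply Theorem \ref{ThGSc2} with $k=1$ and translate the notation}.

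I do not anticipate a real obstacle here, since everything has been set up precisely so that the general theorem specializes cleanly. The only point that deserves a sentence of care is checking that the hypotheses of Theorem \ref{ThGSc2} really do reduce to ``Urysohn'' when $k=1$ — that is, confirming $S(2)=\text{Urysohn}$ — and that $\chi_2(X)$ as defined via families of closed $S(1)$-neighborhoods $\sV_x$ with the property that $\overline{W}$ contains a member of $\sV_x$ for every open neighborhood $W$ of $x$ is literally the definition of $k(X)$ recalled in the text after Definition \ref{DPsi}. Both of these are explicitly recorded in the excerpt, so the corollary follows immediately and the write-up can simply say: this is the case $k=1$ of Theorem \ref{ThGSc2}, after replacing $c_2(X)$ by $Uc(X)$ and $\chi_2(X)$ by $k(X)$.
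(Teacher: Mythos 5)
Your proposal is correct and matches the paper exactly: the corollary is stated there precisely as the case $k=1$ of Theorem \ref{ThGSc2}, rewritten via the identifications $S(2)=$ Urysohn, $c_2(X)=Uc(X)$, and $\chi_2(X)=k(X)$, all of which the paper records where you cite them. No further argument is needed.
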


Now, we are going to formulate and prove the counterpart of the two inequalities $\left| X\right| \leq 2^{s(X)\psi (X)}$  
and $\left| X\right| \leq 2^{s_2(X)\psi_2 (X)}$ for $S(n)$-spaces. 
 
The following lemma for the case $k=1$ was proved by \v{S}apirovski\v{i} (see \cite[Proposition 4.8]{Hod84}). 

\begin{lemma}\label{LSapIG}
Let $k\in \mathbb{N}^{+}$, $X$ be a topological space, $\kappa=s_{2k-1}(X)$ and $C\subseteq X$. For each 
$x\in C$ let $U^x$ be an open $S(2k-1)$-neighborhood of $x$ and let $\mathcal{U}=\{U^x:x\in C\}$.
Then there exist an $S(2k-1)$-discrete subset $A$ of $C$ such that $\left| A\right| \leq \kappa$ and $C\subseteq\theta_{2k-1}(A)\cup \bigcup \left\{U^x:x\in A\right\}$.
\end{lemma}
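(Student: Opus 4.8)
The plan is to follow the classical Šapirovskiĭ argument for the usual spread, replacing open neighborhoods and ordinary closure throughout by $S(2k-1)$-open neighborhoods and the $\theta_{2k-1}$-closure operator. We build the set $A$ by a greedy transfinite recursion of length at most $\kappa^+$. Fix an enumeration strategy: having chosen distinct points $x_\beta\in C$ for $\beta<\alpha$ (with $\alpha<\kappa^+$), consider the set $C_\alpha:=\theta_{2k-1}\bigl(\{x_\beta:\beta<\alpha\}\bigr)\cup\bigcup\{U^{x_\beta}:\beta<\alpha\}$. If $C\subseteq C_\alpha$ we stop and put $A=\{x_\beta:\beta<\alpha\}$; otherwise we pick $x_\alpha\in C\setminus C_\alpha$ and continue. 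The key claim is that this process terminates at some stage $\alpha<\kappa^+$.

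To see termination, suppose for contradiction that the points $x_\alpha$ are defined for all $\alpha<\kappa^+$. I claim the set $D:=\{x_\alpha:\alpha<\kappa^+\}$ is $S(2k-1)$-discrete, which contradicts $|D|=\kappa^+>\kappa=s_{2k-1}(X)$. Indeed, fix $\alpha<\kappa^+$. Since $x_\alpha\notin C_\alpha$, in particular $x_\alpha\notin U^{x_\beta}$ for every $\beta<\alpha$; and since $x_\alpha\notin\theta_{2k-1}(\{x_\beta:\beta<\alpha\})$, there is an open $S(2k-1)$-neighborhood $W_\alpha$ of $x_\alpha$ with $W_\alpha\cap\{x_\beta:\beta<\alpha\}=\varnothing$. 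Consider the open $S(2k-1)$-neighborhood $V_\alpha:=U^{x_\alpha}\cap W_\alpha$ of $x_\alpha$ (an intersection of two $S(2k-1)$-neighborhoods of the same point is again one: intersect the two chains termwise). For $\beta<\alpha$ we have $x_\beta\notin W_\alpha$, so $x_\beta\notin V_\alpha$. For $\beta>\alpha$ we have $x_\beta\notin U^{x_\alpha}$ (because $x_\alpha\in\{x_\gamma:\gamma<\beta\}$, so $U^{x_\alpha}$ is one of the sets removed in forming $C_\beta$), hence $x_\beta\notin V_\alpha$. Thus $V_\alpha\cap D=\{x_\alpha\}$, so $D$ is $S(2k-1)$-discrete, the desired contradiction.

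Hence the recursion halts at some $\alpha<\kappa^+$, giving $A:=\{x_\beta:\beta<\alpha\}$ with $|A|\le\kappa$ and, by the stopping condition, $C\subseteq\theta_{2k-1}(A)\cup\bigcup\{U^x:x\in A\}$. It remains only to observe that $A$ is itself $S(2k-1)$-discrete: the argument in the previous paragraph applied with $D$ replaced by $A$ shows that each $x_\beta\in A$ has the open $S(2k-1)$-neighborhood $V_\beta$ with $V_\beta\cap A=\{x_\beta\}$ (the same two cases, $\gamma<\beta$ and $\beta<\gamma<\alpha$, go through verbatim). This completes the proof.

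The step I expect to be the only real point requiring care is the verification that a finite intersection of $S(2k-1)$-neighborhoods of a fixed point $x$ is again an $S(2k-1)$-neighborhood of $x$: given chains $U_1\subset\cdots\subset U_k$ and $U_1'\subset\cdots\subset U_k'$ with $\overline{U}_i\subset U_{i+1}$ and $\overline{U}_i'\subset U_{i+1}'$, the termwise intersections $U_i\cap U_i'$ satisfy $\overline{U_i\cap U_i'}\subset\overline{U}_i\cap\overline{U}_i'\subset U_{i+1}\cap U_{i+1}'$, so they form a valid chain witnessing that $U_k\cap U_k'$ is an $S(2k-1)$-neighborhood of $x$; and the first set of the new chain is contained in the first sets of both original chains, which is what is needed for the "$W_\alpha$ disjoint from earlier points" bookkeeping. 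Everything else is the standard Šapirovskiĭ bookkeeping transported along the $\theta_{2k-1}$ operator, and in particular it does not use idempotence of $\theta_{2k-1}$, only its monotonicity and the fact that $x\notin\theta_{2k-1}(B)$ exactly when some $S(2k-1)$-neighborhood of $x$ misses $B$.
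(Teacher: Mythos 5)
Your proof is correct and takes essentially the same \v{S}apirovski\v{i}-style closing-off route as the paper: greedily pick points outside the $\theta_{2k-1}$-closure of the previously chosen points and outside the union of the previously used $S(2k-1)$-neighborhoods, then witness $S(2k-1)$-discreteness by intersecting each chosen point's neighborhood with one that misses the earlier points (both arguments rely on the termwise-intersection fact you verify). The only differences are bookkeeping: the paper threads a well-ordering of $\mathcal{U}$ and lets the increasing indices force termination, deducing $\left| A\right|\le\kappa$ from discreteness afterwards, whereas you run the recursion directly on points, use each point's own $U^{x}$ (which matches the statement verbatim), and halt the recursion before $\kappa^{+}$ via the spread bound.
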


\begin{proof}
Let $\{U_\alpha:\alpha<\beta\}$ be a well-ordering of the elements of $\sU$ and let $x_0\in C\cap U_0(k)$.
Suppose that the points $\{x_{\alpha_\gamma}:\gamma<\delta\}$ have already been chosen. Let 
$\alpha_\delta$ be the first ordinal greater than $\alpha_\gamma$, for each $\gamma<\delta$, such that 
there exists $x_{\alpha_\delta}\in (C\cap U_{\alpha_\delta}(k))\setminus\left[\left(\bigcup_{\gamma<\delta}U_{\alpha_\gamma}\right)\cup\theta_{2k-1}(\{x_{\alpha_\gamma}:\gamma<\delta\})\right]$. We finish this selection when such a 
point $x_{\alpha_\delta}$ does not exist, i.e. when $C\subseteq \left(\bigcup_{\gamma<\delta}U_{\alpha_\gamma}\right)\cup\theta_{2k-1}(\{x_{\alpha_\gamma}:\gamma<\delta\})$.
Therefore if $A$ is the set of selected points, then $C\subseteq\theta_{2k-1}(A)\cup \bigcup \left\{U^x:x\in A\right\}$.

To finish the proof we need to show that $A$ is an $S(2k-1)$-discrete set in $X$ and therefore $|A|\le \kappa$.
Let $x_{\alpha_{\gamma_0}}\in A$. Then 
$x_{\alpha_{\gamma_0}}\notin \theta_{2k-1}(\{x_{\alpha_\gamma}:\gamma<\gamma_0\})$. 
Hence, there exists an open $S(2k-1)$-neighborhood $V_{\alpha_{\gamma_0}}$ of $x_{\alpha_{\gamma_0}}$ such that 
$V_{\alpha_{\gamma_0}}\cap\{x_{\alpha_\gamma}:\gamma<\gamma_0\}=\emptyset$. Now, suppose that there exists
$\gamma_1$ such that $x_{\alpha_{\gamma_1}}\in A\cap V_{\alpha_{\gamma_0}}\cap U_{\alpha_{\gamma_0}}$. 
Since $x_{\alpha_{\gamma_1}}\in V_{\alpha_{\gamma_0}}$, we have $\gamma_1\ge \gamma_0$. From 
$x_{\alpha_{\gamma_1}}\in U_{\alpha_{\gamma_0}}$ and 
$x_{\alpha_{\gamma_1}}\in U_{\alpha_{\gamma_1}}(k)\setminus\left[\left(\bigcup_{\gamma<\gamma_1}U_{\alpha_\gamma}\right)\cup\theta_{2k-1}(\{x_{\alpha_\gamma}:\gamma<\gamma_1\})\right]$ 
we conclude that $U_{\alpha_{\gamma_0}}\subsetneq \bigcup_{\gamma<\gamma_1}U_{\alpha_\gamma}$, thus 
$\gamma_0\ge\gamma_1$. Hence $\gamma_1=\gamma_0$ and therefore 
$\{x_{\alpha_{\gamma_0}}\}= A\cap V_{\alpha_{\gamma_0}}\cap U_{\alpha_{\gamma_0}}$ i.e. $A$ is an 
$S(2k-1)$-discrete set in $X$.
\end{proof}

The next lemma for the case $n=1$ was proved by Schr\"oder (see \cite[Lemma 6]{Sch93}).

\begin{lemma}\label{LSchIG}
Let $k\in \mathbb{N}^{+}$, $X$ be a topological space, $\kappa=s_{2k}(X)$ and $C\subseteq X$. For each 
$x\in C$ let $U^x$ be an open $S(2k-1)$-neighborhood of $x$ and let $\mathcal{U}=\{U^x:x\in C\}$.
Then there exist an $S(2k)$-discrete subset $A$ of $C$ such that $\left| A\right| \leq \kappa $ 
and $C\subseteq\theta_{2k}(A)\cup \bigcup \left\{\overline{U}^x:x\in A\right\}$.
\end{lemma}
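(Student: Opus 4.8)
The plan is to mimic the proof of Lemma~\ref{LSapIG} almost verbatim, replacing the $\theta_{2k-1}$-closure by the $\theta_{2k}$-closure and the open $S(2k-1)$-neighborhoods $U_{\alpha_\gamma}$ in the ``exclusion'' set by their closures $\overline{U}_{\alpha_\gamma}$, and replacing the discreteness witness $U^x$ in the conclusion by $\overline{U}^x$. Concretely, well-order $\sU=\{U_\alpha:\alpha<\beta\}$ and build a transfinite sequence of points $x_{\alpha_\gamma}$: having chosen $\{x_{\alpha_\gamma}:\gamma<\delta\}$, let $\alpha_\delta$ be the least index (beyond all previous ones) for which there is a point
\[
x_{\alpha_\delta}\in\bigl(C\cap U_{\alpha_\delta}(k)\bigr)\setminus\Bigl[\bigl(\textstyle\bigcup_{\gamma<\delta}\overline{U}_{\alpha_\gamma}\bigr)\cup\theta_{2k}(\{x_{\alpha_\gamma}:\gamma<\delta\})\Bigr],
\]
and stop when no such point exists, so that at termination $C\subseteq\bigl(\bigcup_{\gamma}\overline{U}_{\alpha_\gamma}\bigr)\cup\theta_{2k}(\{x_{\alpha_\gamma}:\gamma\})$. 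Setting $A=\{x_{\alpha_\gamma}\}$, this gives $C\subseteq\theta_{2k}(A)\cup\bigcup\{\overline{U}^x:x\in A\}$ immediately, since $\overline{U}_{\alpha_\gamma}=\overline{U^{x_{\alpha_\gamma}}}$.

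The remaining work is to verify that $A$ is $S(2k)$-discrete, whence $|A|\le s_{2k}(X)=\kappa$. Fix $x_{\alpha_{\gamma_0}}\in A$. From $x_{\alpha_{\gamma_0}}\notin\theta_{2k}(\{x_{\alpha_\gamma}:\gamma<\gamma_0\})$ we get, by the definition of $\theta_{2k}$-closure, an open $S(2k-1)$-neighborhood $V_{\alpha_{\gamma_0}}$ of $x_{\alpha_{\gamma_0}}$ with $\overline{V}_{\alpha_{\gamma_0}}\cap\{x_{\alpha_\gamma}:\gamma<\gamma_0\}=\emptyset$. I claim $\overline{V}_{\alpha_{\gamma_0}}\cap U_{\alpha_{\gamma_0}}\cap A=\{x_{\alpha_{\gamma_0}}\}$, which is exactly the witness required for $S(2k)$-discreteness (an open $S(2k-1)$-neighborhood $W$ of $x_{\alpha_{\gamma_0}}$ with $\overline{W}\cap A=\{x_{\alpha_{\gamma_0}}\}$ is obtained by intersecting $V_{\alpha_{\gamma_0}}$ with an open $S(2k-1)$-neighborhood contained in $U_{\alpha_{\gamma_0}}$; note $U_{\alpha_{\gamma_0}}$ is $S(2k-1)$-open and $x_{\alpha_{\gamma_0}}\in U_{\alpha_{\gamma_0}}(k)$, and by Proposition~\ref{P1} the intersection of two $S(2k-1)$-neighborhoods of the same point contains an $S(2k-1)$-neighborhood of that point). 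Suppose $x_{\alpha_{\gamma_1}}\in A\cap\overline{V}_{\alpha_{\gamma_0}}\cap U_{\alpha_{\gamma_0}}$. Since $x_{\alpha_{\gamma_1}}\in\overline{V}_{\alpha_{\gamma_0}}$, the disjointness of $\overline{V}_{\alpha_{\gamma_0}}$ from $\{x_{\alpha_\gamma}:\gamma<\gamma_0\}$ forces $\gamma_1\ge\gamma_0$. On the other hand, $x_{\alpha_{\gamma_1}}\in U_{\alpha_{\gamma_0}}\subseteq\overline{U}_{\alpha_{\gamma_0}}$, while by construction $x_{\alpha_{\gamma_1}}\notin\bigcup_{\gamma<\gamma_1}\overline{U}_{\alpha_\gamma}$; if $\gamma_0<\gamma_1$ this is a contradiction, so $\gamma_0\ge\gamma_1$. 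Hence $\gamma_0=\gamma_1$, proving the claim.

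The only genuinely delicate point is the step ``it is not difficult to see that at termination $C\subseteq\bigl(\bigcup_\gamma\overline{U}_{\alpha_\gamma}\bigr)\cup\theta_{2k}(A)$'' together with the well-foundedness of the recursion: one must check that if for some $x\in C$ the point $x$ lies outside $\theta_{2k}(A)\cup\bigcup\{\overline{U}^y:y\in A\}$, then $U^x$ itself (or rather its index in the well-ordering) would have been eligible to be chosen at some stage, contradicting that the selection stopped; this is handled exactly as in Lemma~\ref{LSapIG}, using that the ordinals $\alpha_\gamma$ are strictly increasing and hence the recursion must halt. I expect the bookkeeping with ``$\overline{U}$'' in place of ``$U$'' in the exclusion set to be the main place where one has to be slightly careful, precisely because it is $\overline{U}_{\alpha_{\gamma_0}}$ (not $U_{\alpha_{\gamma_0}}$) that appears in the exclusion clause, and one needs $U_{\alpha_{\gamma_0}}\subseteq\overline{U}_{\alpha_{\gamma_0}}$ to close the discreteness argument above; everything else is the obvious ``closed'' analogue of \v{S}apirovski\u\i's argument.
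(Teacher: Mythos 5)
Your proposal is essentially the paper's own proof: the same well-ordering of $\sU$, the same transfinite selection with exclusion set $\bigl(\bigcup_{\gamma<\delta}\overline{U}_{\alpha_\gamma}\bigr)\cup\theta_{2k}(\{x_{\alpha_\gamma}:\gamma<\delta\})$, the same termination/coverage argument, and the same two-inequality argument ($\gamma_1\ge\gamma_0$ from $\overline{V}_{\alpha_{\gamma_0}}$ missing the earlier points, $\gamma_0\ge\gamma_1$ from the exclusion clause) for $S(2k)$-discreteness. The only point to tighten is the statement of your discreteness claim: since a witness $W\subseteq V_{\alpha_{\gamma_0}}\cap U_{\alpha_{\gamma_0}}$ only yields $\overline{W}\subseteq\overline{V}_{\alpha_{\gamma_0}}\cap\overline{U}_{\alpha_{\gamma_0}}$, you need $A\cap\overline{V}_{\alpha_{\gamma_0}}\cap\overline{U}_{\alpha_{\gamma_0}}=\{x_{\alpha_{\gamma_0}}\}$ (which is how the paper states it), not merely the version with $U_{\alpha_{\gamma_0}}$; fortunately your contradiction argument uses only $x_{\alpha_{\gamma_1}}\in\overline{U}_{\alpha_{\gamma_0}}$, so it proves this stronger form verbatim (and the intersection fact you attribute to Proposition \ref{P1} is immediate by intersecting the two defining chains, though it is not literally that proposition).
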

\begin{proof}
Let $\{U_\alpha:\alpha<\beta\}$ be a well-ordering of the elements of $\sU$ and let $x_0\in C\cap U_0(k)$.
Suppose that the points $\{x_{\alpha_\gamma}:\gamma<\delta\}$ have already been chosen. Let 
$\alpha_\delta$ be the first ordinal greater than $\alpha_\gamma$, for each $\gamma<\delta$, such that 
there exists $x_{\alpha_\delta}\in (C\cap U_{\alpha_\delta}(k))\setminus\left[\left(\bigcup_{\gamma<\delta}\overline{U}_{\alpha_\gamma}\right)\cup\theta_{2k}(\{x_{\alpha_\gamma}:\gamma<\delta\})\right]$. We finish this selection when such a 
point $x_{\alpha_\delta}$ does not exist, i.e. when $C\subseteq \left(\bigcup_{\gamma<\delta}\overline{U}_{\alpha_\gamma}\right)\cup\theta_{2k}(\{x_{\alpha_\gamma}:\gamma<\delta\})$.
Therefore if $A$ is the set of selected points, then 
$C\subseteq\theta_{2k}(A)\cup \bigcup \left\{\overline{U}^x:x\in A\right\}$.

To finish the proof we need to show that $A$ is an $S(2k)$-discrete set in $X$ and therefore $|A|\le \kappa$.
Let $x_{\alpha_{\gamma_0}}\in A$. Then $x_{\alpha_{\gamma_0}}\notin \theta_{2k}(\{x_{\alpha_\gamma}:\gamma<\gamma_0\})$. 
Hence, there exists an open $S(2k-1)$-neighborhood $V_{\alpha_{\gamma_0}}$ of $x_{\alpha_{\gamma_0}}$ such that 
$\overline{V}_{\alpha_{\gamma_0}}\cap\{x_{\alpha_\gamma}:\gamma<\gamma_0\}=\emptyset$. 
Now, suppose that there exists $\gamma_1$ such that 
$x_{\alpha_{\gamma_1}}\in A\cap \overline{V}_{\alpha_{\gamma_0}}\cap \overline{U}_{\alpha_{\gamma_0}}$. 
Since $x_{\alpha_{\gamma_1}}\in \overline{V}_{\alpha_{\gamma_0}}$, we have $\gamma_1\ge \gamma_0$. From 
$x_{\alpha_{\gamma_1}}\in \overline{U}_{\alpha_{\gamma_0}}$ and 
$x_{\alpha_{\gamma_1}}\in U_{\alpha_{\gamma_1}}(k)\setminus\left[\left(\bigcup_{\gamma<\gamma_1}\overline{U}_{\alpha_\gamma}\right)\cup\theta_{2k}(\{x_{\alpha_\gamma}:\gamma<\gamma_1\})\right]$ 
we conclude that 
$\overline{U}_{\alpha_{\gamma_0}}\subsetneq \bigcup_{\gamma<\gamma_1}\overline{U}_{\alpha_\gamma}$, 
thus $\gamma_0\ge\gamma_1$. Hence $\gamma_1=\gamma_0$ and therefore 
$\{x_{\alpha_{\gamma_0}}\}= A\cap \overline{V}_{\alpha_{\gamma_0}}\cap \overline{U}_{\alpha_{\gamma_0}}$ i.e. 
$A$ is an $S(2k)$-discrete set in $X$.
\end{proof}

The following theorem for the case $k=1$ was proved by Hajnal and Juh\'asz \cite{HajJuh67}.

\begin{theorem}\label{THJIG}
Let $k\in \mathbb{N}^{+}$. If $X$ is an $S(k-1)$-space, then 
$\left| X\right|\leq 2^{s_{2k-1}(X)\psi_{2k-1}(X)}$.
\end{theorem}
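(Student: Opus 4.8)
The plan is to adapt the classical Hajnal--Juh\'asz ``closing-off'' argument, using Lemma~\ref{LSapIG} in place of \v{S}apirovski\v{i}'s lemma, exactly as Theorem~\ref{ThGHJ2} adapted Charlesworth's lemma. Set $\kappa=s_{2k-1}(X)\psi_{2k-1}(X)$. For each $x\in X$ fix a family $\{U_\alpha^x:\alpha<\kappa\}$ of $S(2k-1)$-open neighborhoods of $x$ with $\{x\}=\bigcap_{\alpha<\kappa}U_\alpha^x$; this is possible because $X$ is an $S(k-1)$-space, so $\psi_{2k-1}(X)$ is defined. Then build by transfinite recursion an increasing chain $(A_\alpha)_{\alpha<\kappa^+}$ of subsets of $X$ with $|A_\alpha|\le 2^\kappa$, where at stage $\alpha$ one considers all ``small'' candidate sets $C$ assembled from the $S(2k-1)$-open neighborhoods associated to points of $\bigcup_{\beta<\alpha}A_\beta$, together with the $S(2k-1)$-closure operator, and — whenever such a configuration fails to cover $X$ — throws a witness point outside it into $A_\alpha$. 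The cardinal arithmetic $(2^\kappa)^\kappa=2^\kappa$ and regularity of $\kappa^+$ keep $|A_\alpha|\le 2^\kappa$ at each step, so $A:=\bigcup_{\alpha<\kappa^+}A_\alpha$ has cardinality at most $2^\kappa$.

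The core of the argument is then to show $A=X$. Suppose not, and pick $y\in X\setminus A$. The idea is: for each $x\in A$ we have $x\ne y$, so since $\{y\}=\bigcap_{\alpha<\kappa}U_\alpha^y$ there is some $\delta(x)<\kappa$ with $x\notin U_{\delta(x)}^y$; equivalently, choosing inside $U_{\delta(x)}^y$ a smaller $S(2k-1)$-open neighborhood of $y$ whose relevant part $U(k)$ we can work with, and shrinking further if needed, we can separate $x$ from a fixed $S(2k-1)$-open neighborhood of $y$. More precisely, I would index $\sB(y)=\{B_\delta:\delta<\kappa\}$ and for each $\delta$ let $C_\delta=\{x\in A: x\notin B_\delta\ \text{(in the appropriate $U(k)$-sense)}\}$, so that $A=\bigcup_{\delta<\kappa}C_\delta$; then for each $x\in C_\delta$ one has an open $S(2k-1)$-neighborhood $U^x$ of $x$ disjoint (in the $U(k)$ sense) from $B_\delta$. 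Apply Lemma~\ref{LSapIG} to $C_\delta$ and the family $\{U^x:x\in C_\delta\}$ to get an $S(2k-1)$-discrete $A_\delta\subseteq C_\delta$ with $|A_\delta|\le s_{2k-1}(X)\le\kappa$ and $C_\delta\subseteq\theta_{2k-1}(A_\delta)\cup\bigcup\{U^x:x\in A_\delta\}$. Now $\bigcup_{\delta<\kappa}A_\delta$ has size $\le\kappa$, so by regularity of $\kappa^+$ there is $\alpha<\kappa^+$ with $\bigcup_{\delta<\kappa}A_\delta\subseteq\bigcup_{\beta<\alpha}A_\beta$; thus the neighborhoods $U^x$ for $x\in\bigcup_\delta A_\delta$ are all available as data at stage $\alpha$, and the sets $\theta_{2k-1}(A_\delta)$ and the point-set $\bigcup\{U^x:x\in A_\delta\}$ form one of the ``small configurations'' inspected at that stage. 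Since $y\notin\bigcup_{\delta<\kappa}\big(\theta_{2k-1}(A_\delta)\cup\bigcup\{U^x:x\in A_\delta\}\big)$ — because each $A_\delta\subseteq C_\delta$ consists of points separated from $B_\delta$, and $\theta_{2k-1}(A_\delta)$ cannot reach $y$ across $B_\delta$ — the construction would have put a point of $A_\alpha$ outside that union; but $A=\bigcup_\delta C_\delta\subseteq\bigcup_\delta\big(\theta_{2k-1}(A_\delta)\cup\bigcup\{U^x:x\in A_\delta\}\big)$ contains $A_\alpha$, a contradiction.

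The step I expect to be the main obstacle is getting the ``separation from $B_\delta$'' bookkeeping exactly right, because the $\theta_{2k-1}$-closure operator is \emph{not} idempotent: one must be careful that $x\notin B_\delta$ (for a suitable $S(2k-1)$-open $B_\delta$ around $y$) really does yield an open $S(2k-1)$-neighborhood $U^x$ of $x$ with $U^x$ disjoint from $B_\delta$ in the sense needed for Lemma~\ref{LSapIG}, and conversely that $y\notin\theta_{2k-1}(A_\delta)$ follows — this is where the hypothesis ``$X$ is an $S(k-1)$-space'' (rather than $S(2k-1)$) is used, since separating a single pair $x,y$ by $S(2k-1)$-open sets requires only an $S(2k-1)$-\emph{neighborhood} structure at $y$, which the $\psi_{2k-1}$-family supplies, combined with the much weaker $S(k-1)$ global separation. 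I would also double-check the stage-$\alpha$ recursion clause is phrased so that it genuinely anticipates configurations of the form ``$\kappa$-many sets, each a $\theta_{2k-1}$-closure of a $\le\kappa$-subset together with a $\le\kappa$-union of associated $U(k)$'s'', mirroring clause (e) in the proof of Theorem~\ref{ThGHJ2}; once that is set up, the cardinal arithmetic and the final contradiction are routine.
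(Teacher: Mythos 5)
Your overall scaffolding is exactly the paper's: the same closing-off recursion of length $\kappa^+$ with $|A_\alpha|\le 2^\kappa$, the same use of Lemma~\ref{LSapIG} on the pieces of $A$ determined by the pseudobase at $y$, regularity of $\kappa^+$, and the same final contradiction. The genuine gap is precisely the step you flag as the ``main obstacle'' and leave unresolved: you ask, for each $x\in C_\delta$, for an open $S(2k-1)$-neighborhood $U^x$ of $x$ that is disjoint from $B_\delta$ (``in the $U(k)$ sense''). In a space that is only assumed $S(k-1)$ no such $U^x$ need exist --- you cannot in general separate $x$ from the set $B_\delta$, nor even from the point $y$, by disjoint $S(2k-1)$-neighborhoods --- so an argument that really required this would fail, and Lemma~\ref{LSapIG} does not ask for it either.

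The fix is the paper's simpler bookkeeping, which needs no separation on $x$'s side at all. Put $C_\delta=X\setminus B_\delta^y$ (plain complement, no $U(k)$ qualification) and $D_\delta=A\cap C_\delta$; since $\bigcap_{\delta<\kappa}B_\delta^y=\{y\}$ and $y\notin A$, indeed $A=\bigcup_{\delta<\kappa}D_\delta$. For $x\in D_\delta$ choose $B^x\in\mathcal{B}(x)$ with merely $y\notin B^x$, which exists because $\bigcap\mathcal{B}(x)=\{x\}$. After Lemma~\ref{LSapIG} yields $G_\delta\subseteq D_\delta$ with $|G_\delta|\le\kappa$ and $D_\delta\subseteq\theta_{2k-1}(G_\delta)\cup\bigcup\{B^x:x\in G_\delta\}$, the two facts needed about $y$ are: $y\notin\bigcup\{B^x:x\in G_\delta\}$ because each $B^x$ omits $y$, and $y\notin\theta_{2k-1}(G_\delta)$ because $G_\delta\subseteq X\setminus B_\delta^y$ and $B_\delta^y$ is itself an open $S(2k-1)$-neighborhood of $y$; all the separating is done by the pseudocharacter family at $y$, not by any global separation between $x$ and $B_\delta$. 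The hypothesis that $X$ is an $S(k-1)$-space is used only to make $\psi_{2k-1}(X)$ well defined, i.e.\ to produce the families $\mathcal{B}(x)$ (an $S(k-1)$-chain at $x$ avoiding $y$ extends to an open $S(2k-1)$-neighborhood of $x$ omitting $y$ by adjoining the open set $X\setminus\{y\}$). Finally, your closing-off clause should be stated as in the paper: at stage $\alpha$ one inspects every pair consisting of a union $W$ of at most $\kappa$ members of $\mathcal{U}_\alpha$ and a family $\{F_\gamma:\gamma<\kappa\}$ of subsets of $\bigcup_{\beta<\alpha}A_\beta$ each of size at most $\kappa$, adding a point outside $W\cup\bigcup_{\gamma<\kappa}\theta_{2k-1}(F_\gamma)$ whenever this union is not all of $X$; note the $\theta_{2k-1}$-closures are taken of small \emph{point sets}, not of unions of $U(k)$'s as in Theorem~\ref{ThGHJ2}. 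With these corrections your proposal becomes the paper's proof.
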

\begin{proof}
Let $k\in \mathbb{N}^+$, $\kappa=s_{2k-1}(X)\psi_{2k-1}(X)$ and for every $x\in X$ let 
$\sB(x)=\{B_\alpha^x:\alpha<\kappa\}$ be a family of open $S(2k-1)$-neighborhoods of $x$ such that  
$\{x\}=\bigcap\{B_\alpha^x:\alpha<\kappa\}$.  
By transfinite recursion we construct an increasing sequence 
$(A_\alpha)_{\alpha<\kappa^+}$ of subsets of $X$ and a sequence of families of $S(2k-1)$-open sets 
$(\sU_\alpha)_{\alpha<\kappa^+}$ as follows:
\begin{itemize}
\item[(a)] $\sU_0=\emptyset$ and $A_0=\{x_0\}$, where $x_0$ is an arbitrary point of $X$;
\item[(b)] $|A_\alpha|\le\ 2^\kappa$ for every $\alpha<\kappa^+$;
\item[(c)] $\sU_\alpha=\bigcup\{\sB(x):x\in\bigcup_{\beta < \alpha}A_\beta\}$, for every $0<\alpha<\kappa^+$; 
\item[(e)] For $0<\alpha<\kappa^+$, if $W$ is the union 
of at most $\kappa$ many elements of $\sU_\alpha$ and $\{F_\gamma:\gamma<\kappa\}$ is 
a family of subsets of $\bigcup_{\beta < \alpha}A_\beta$ such that $|F_\gamma|\le\kappa$ for 
every $\gamma<\kappa$, and $W\cup\bigcup_{\gamma<\kappa}\theta_{2k-1}(F_\gamma)\ne X$, then we pick a point 
$x_W\in X\setminus\left(W\cup\bigcup_{\gamma<\kappa}\theta_{2k-1}(F_\gamma)\right)$. Let $E$ be the set of all such 
points $x_W$. Then we set $A_{\alpha}=E\cup\bigcup_{\beta<\alpha}A_\beta$. Since 
$|\bigcup_{\beta<\alpha}A_\beta|\le 2^\kappa$, 
we have $|\sU_\alpha|\le 2^\kappa$ and therefore $|E|\le (2^\kappa)^\kappa=2^\kappa$. 
Thus $|A_\alpha|\le 2^\kappa$ and therefore (b) is satisfied.
\end{itemize}

Now, let $A=\bigcup_{\alpha<\kappa^+}A_\alpha$. Since $|A|\le\kappa^+\cdot 2^\kappa=2^\kappa$, to 
finish the proof it is sufficient to show that $X=A$. Suppose that 
there exists a point $y\in X\setminus A$ and let $\sB(y)=\{B_\delta^y:\delta<\kappa\}$. Then 
$X\setminus\{y\}=\bigcup\{C_\delta:\delta<\kappa\}$, where $C_\delta=X\setminus B_\delta^y$ 
whenever $\delta<\kappa$, and $y\notin\theta_{2k-1}(C_\delta)$ for each $\delta<\kappa$.
For each $\delta<\kappa$ let $D_\delta=A\cap C_\delta$ and for each $x\in D_\delta$ let $B^x\in\sB(x)$ be such that 
$y\notin B^x$. Now, we can apply 
Lemma \ref{LSapIG} to each $D_\delta$ and the family $\{B^x:x\in D_\delta\}$ to obtain a subset $G_\delta$ 
of $D_\delta$ such that $|G_\delta|\le\kappa$ and 
$D_\delta\subseteq \theta_{2k-1}(G_\delta)\cup\bigcup \left\{B^x:x\in G_\delta\right\}$.
Let $W=\bigcup_{\delta<\kappa}\bigcup\left\{B^x:x\in G_\delta\right\}$. Then 
$A\subseteq W\cup(\bigcup_{\delta<\kappa}\theta_{2k-1}(G_\delta))$ and 
$y\notin W\cup(\bigcup_{\delta<\kappa}\theta_{2k-1}(G_\delta))$.
Using the fact that $|\bigcup_{\delta<\kappa}G_\delta|\le\kappa$ and that $\kappa^+$ is a regular cardinal 
we can find $\alpha<\kappa^+$ such that $\bigcup_{\delta<\kappa}G_\delta\subseteq \sU_\alpha$. 
Then it follows from the construction of the set $A_\alpha$ that there exists 
$x\in A_\alpha\cap \left(X\setminus \left(W\cup\left(\bigcup_{\delta<\kappa}\theta_{2k-1}(G_\delta)\right)\right)\right)$.
Thus $x\in A_\alpha\subseteq A\subseteq W\cup(\bigcup_{\delta<\kappa}\theta_{2k-1}(G_\delta))$ -- contradiction.
\end{proof}

The next result for the case $k=1$ was proved by Schr\"oder (see \cite[Theorem 8]{Sch93}). 

\begin{theorem}\label{TScIG}
Let $k\in \mathbb{N}^{+}$. If $X$ is an $S(k)$-space, then 
$\left| X\right|\leq 2^{s_{2k}(X)\psi_{2k}(X)}$.
\end{theorem}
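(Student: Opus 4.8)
The plan is to run the same transfinite construction as in Theorem \ref{THJIG}, but with every open gadget replaced by its closed counterpart and with Lemma \ref{LSchIG} used in place of Lemma \ref{LSapIG}. Put $\kappa=s_{2k}(X)\psi_{2k}(X)$. Since $X$ is an $S(k)$-space, $\psi_{2k}(X)$ is defined, so for every $x\in X$ we may fix a family $\sB(x)=\{B_\alpha^x:\alpha<\kappa\}$ of open $S(2k-1)$-neighborhoods of $x$ with $\{x\}=\bigcap\{\overline{B}_\alpha^x:\alpha<\kappa\}$. Then build, by recursion on $\alpha<\kappa^+$, an increasing chain $(A_\alpha)$ with $|A_\alpha|\le 2^\kappa$, together with $\sU_\alpha=\bigcup\{\sB(x):x\in\bigcup_{\beta<\alpha}A_\beta\}$, exactly as in Theorem \ref{THJIG}, except that the closing-off step (e) now reads: whenever $W$ is a union of the closures of at most $\kappa$ many members of $\sU_\alpha$, $\{F_\gamma:\gamma<\kappa\}$ is a family of $\le\kappa$-sized subsets of $\bigcup_{\beta<\alpha}A_\beta$, and $W\cup\bigcup_{\gamma<\kappa}\theta_{2k}(F_\gamma)\ne X$, we put a witness $x_W\in X\setminus(W\cup\bigcup_{\gamma<\kappa}\theta_{2k}(F_\gamma))$ into $A_\alpha$. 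The same cardinal arithmetic as before ($|\sU_\alpha|\le 2^\kappa$, hence at most $2^\kappa$ admissible pairs $(W,\{F_\gamma\})$) keeps $|A_\alpha|\le 2^\kappa$, so $A:=\bigcup_{\alpha<\kappa^+}A_\alpha$ satisfies $|A|\le 2^\kappa$, and it remains only to prove $A=X$.

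Suppose $y\in X\setminus A$ and write $\sB(y)=\{B_\delta^y:\delta<\kappa\}$. The crucial point is that $\{y\}=\bigcap\{\overline{B}_\delta^y:\delta<\kappa\}$ gives $X\setminus\{y\}=\bigcup_{\delta<\kappa}C_\delta$ with $C_\delta:=X\setminus\overline{B}_\delta^y$, and moreover $\overline{B}_\delta^y$ is a closed $S(2k-1)$-neighborhood of $y$ (hence a closed $S(2k)$-neighborhood of $y$ by Proposition \ref{P1}(a)) which is disjoint from $C_\delta$; therefore $y\notin\theta_{2k}(C_\delta)$, and by monotonicity of $\theta_{2k}$ also $y\notin\theta_{2k}(E)$ for every $E\subseteq C_\delta$. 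Set $D_\delta=A\cap C_\delta$; for each $x\in D_\delta$ we have $x\ne y$, so using $\{x\}=\bigcap\{\overline{B}_\alpha^x:\alpha<\kappa\}$ we may pick $B^x\in\sB(x)$ with $y\notin\overline{B^x}$. Applying Lemma \ref{LSchIG} to $D_\delta$ and the family $\{B^x:x\in D_\delta\}$ of open $S(2k-1)$-neighborhoods produces $G_\delta\subseteq D_\delta$ with $|G_\delta|\le s_{2k}(X)\le\kappa$ and $D_\delta\subseteq\theta_{2k}(G_\delta)\cup\bigcup\{\overline{B^x}:x\in G_\delta\}$.

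Now put $W=\bigcup_{\delta<\kappa}\bigcup\{\overline{B^x}:x\in G_\delta\}$. Since $y\notin A$ we have $A=\bigcup_{\delta<\kappa}D_\delta\subseteq W\cup\bigcup_{\delta<\kappa}\theta_{2k}(G_\delta)$, while $y$ lies in neither $W$ (because $y\notin\overline{B^x}$ for all the chosen $x$) nor any $\theta_{2k}(G_\delta)$ (because $G_\delta\subseteq C_\delta$). As $|\bigcup_{\delta<\kappa}G_\delta|\le\kappa$ and $\kappa^+$ is regular, there is $\alpha<\kappa^+$ with $\bigcup_{\delta<\kappa}G_\delta\subseteq\bigcup_{\beta<\alpha}A_\beta$; then every $B^x$ with $x\in\bigcup_{\delta<\kappa}G_\delta$ lies in $\sU_\alpha$, so $W$ is a union of closures of $\le\kappa$ members of $\sU_\alpha$ and $\{G_\delta:\delta<\kappa\}$ is an admissible family in step (e), with $W\cup\bigcup_{\delta<\kappa}\theta_{2k}(G_\delta)\ne X$ since it omits $y$. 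Hence the construction placed some $x\in A_\alpha\setminus\bigl(W\cup\bigcup_{\delta<\kappa}\theta_{2k}(G_\delta)\bigr)$ into $A$, contradicting $A\subseteq W\cup\bigcup_{\delta<\kappa}\theta_{2k}(G_\delta)$.

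I expect no serious obstacle here beyond faithful transcription: the one genuinely new verification relative to Theorem \ref{THJIG} is checking that passing to closures is harmless, i.e. that $\overline{B}_\delta^y$ is still an $S(2k)$-neighborhood of $y$ so that $y\notin\theta_{2k}(C_\delta)$ (this is where Proposition \ref{P1}(a), or directly the equivalent description of $\theta_{2k}$ via closed $S(2k-1)$-neighborhoods, is used), and that the sets $W$ produced at the end — unions of closures of $S(2k-1)$-open sets — match exactly the shape of the sets the recursion's step (e) was designed to catch. Everything else, including the cardinal bookkeeping, is identical to the proofs of Theorems \ref{THJIG} and \ref{ThGSc2}.
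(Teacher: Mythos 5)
Your proposal is correct and follows essentially the same route as the paper's proof: the same closing-off transfinite recursion of length $\kappa^+$ with step (e) catching unions of closures of $\le\kappa$ members of $\sU_\alpha$ together with $\theta_{2k}$-closures of small subsets, and the same application of Lemma \ref{LSchIG} to the sets $D_\delta=A\cap C_\delta$ to reach the contradiction. Your explicit verification that $\overline{B}_\delta^y$ being a closed $S(2k-1)$-neighborhood of $y$ yields $y\notin\theta_{2k}(C_\delta)$, and your choice of $\alpha$ with $\bigcup_{\delta<\kappa}G_\delta\subseteq\bigcup_{\beta<\alpha}A_\beta$ so that the relevant $B^x$ lie in $\sU_\alpha$, match (indeed slightly sharpen the wording of) the paper's argument.
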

\begin{proof}
Let $k\in \mathbb{N}^+$, $\kappa=s_{2k}(X)\psi_{2k}(X)$ and for every $x\in X$ let 
$\sB(x)=\{B_\alpha^x:\alpha<\kappa\}$ be a family of open $S(2k-1)$-neighborhoods of $x$ such that  
$\{x\}=\bigcap\{\overline{B}_\alpha^x:\alpha<\kappa\}$.  
By transfinite recursion we construct an increasing sequence 
$(A_\alpha)_{\alpha<\kappa^+}$ of subsets of $X$ and a sequence of families of $S(2k-1)$-open sets 
$(\sU_\alpha)_{\alpha<\kappa^+}$ as follows:
\begin{itemize}
\item[(a)] $\sU_0=\emptyset$ and $A_0=\{x_0\}$, where $x_0$ is an arbitrary point of $X$;
\item[(b)] $|A_\alpha|\le\ 2^\kappa$ for every $\alpha<\kappa^+$;
\item[(c)] $\sU_\alpha=\bigcup\{\sB(x):x\in\bigcup_{\beta < \alpha}A_\beta\}$, for every $0<\alpha<\kappa^+$; 
\item[(e)] For $0<\alpha<\kappa^+$, if $W$ is the union 
of the closures of at most $\kappa$ many elements of $\sU_\alpha$ and $\{F_\gamma:\gamma<\kappa\}$ is 
a family of subsets of $\bigcup_{\beta < \alpha}A_\beta$ such that $|F_\gamma|\le\kappa$ for 
every $\gamma<\kappa$, and $W\cup\bigcup_{\gamma<\kappa}\theta_{2k}(F_\gamma)\ne X$, then we pick a point 
$x_W\in X\setminus\left(W\cup\bigcup_{\gamma<\kappa}\theta_{2k}(F_\gamma)\right)$. Let $E$ be the set of all such 
points $x_W$. Then we set $A_{\alpha}=E\cup\bigcup_{\beta<\alpha}A_\beta$. Since 
$|\bigcup_{\beta<\alpha}A_\beta|\le 2^\kappa$, 
we have $|\sU_\alpha|\le 2^\kappa$ and therefore $|E|\le (2^\kappa)^\kappa=2^\kappa$. 
Thus $|A_\alpha|\le 2^\kappa$ and therefore (b) is satisfied.
\end{itemize}

Now, let $A=\bigcup_{\alpha<\kappa^+}A_\alpha$. Since $|A|\le\kappa^+\cdot 2^\kappa=2^\kappa$, to 
finish the proof it is sufficient to show that $X=A$. Suppose that 
there exists a point $y\in X\setminus A$ and let $\sB(y)=\{B_\delta^y:\delta<\kappa\}$. Then 
$X\setminus\{y\}=\bigcup\{C_\delta:\delta<\kappa\}$, where $C_\delta=X\setminus \overline{B}_\delta^y$ 
whenever $\delta<\kappa$, and $y\notin\theta_{2k}(C_\delta)$ for each $\delta<\kappa$.
For each $\delta<\kappa$ let $D_\delta=A\cap C_\delta$ and for each $x\in D_\delta$ let $B^x\in\sB(x)$ be such that 
$y\notin \overline{B}^x$. Now, we can apply 
Lemma \ref{LSchIG} to each $D_\delta$ and the family $\{B^x:x\in D_\delta\}$ to obtain a subset $G_\delta$ 
of $D_\delta$ such that $|G_\delta|\le\kappa$ and 
$D_\delta\subseteq \theta_{2k}(G_\delta)\cup\bigcup \left\{\overline{B^x}:x\in G_\delta\right\}$.
Let $W=\bigcup_{\delta<\kappa}\bigcup\left\{\overline{B}^x:x\in G_\delta\right\}$. Then 
$A\subseteq W\cup(\bigcup_{\delta<\kappa}\theta_{2k}(G_\delta))$ and 
$y\notin W\cup(\bigcup_{\delta<\kappa}\theta_{2k}(G_\delta))$.
Using the fact that $|\bigcup_{\delta<\kappa}G_\delta|\le\kappa$ and that $\kappa^+$ is a regular cardinal 
we can find $\alpha<\kappa^+$ such that $\bigcup_{\delta<\kappa}G_\delta\subseteq \sU_\alpha$. 
Then it follows from the construction of the set $A_\alpha$ that there exists 
$x\in A_\alpha\cap \left(X\setminus \left(W\cup\left(\bigcup_{\delta<\kappa}\theta_{2k}(G_\delta)\right)\right)\right)$.
Thus $x\in A_\alpha\subseteq A\subseteq W\cup(\bigcup_{\delta<\kappa}\theta_{2k}(G_\delta))$ -- contradiction.
\end{proof}

Now we are going to formulate and prove the counterpart of the inequality $\left| X\right| \leq 2^{2^{s(X)}}$  
for $S(n)$-spaces. For that end we need to formulate and prove for $S(n)$-spaces the counterpart of the  
inequality $\psi(X)\leq 2^{s(X)}$, which is valid for every Hausdorff space $X$ \cite{Hod84}.

For the case $k=1$ of the following lemma see \cite[Proposition 4.11]{Hod84}.

\begin{lemma}\label{LHJIG}
Let $k\in \bN^{+}$. For every $S(2k-1)$-space $X$, $\psi(X)\leq 2^{s_{2k-1}(X)}$.
\end{lemma}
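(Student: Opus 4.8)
The plan is to mimic the classical \v{S}apirovski\u{\i}-style argument that proves $\psi(X)\le 2^{s(X)}$ for Hausdorff spaces, but carried out inside the framework of $S(2k-1)$-neighborhoods and the operator $\theta_{2k-1}$. Write $\kappa=s_{2k-1}(X)$. For each point $x\in X$ I would look at the family $\sU_x$ of all open $S(2k-1)$-neighborhoods $U$ of $x$ that omit some fixed other point; since $X$ is an $S(2k-1)$-space, $\bigcap\{U(k):U\in\sU_x,\ y\notin U\}=\{x\}$ in the sense that for every $y\ne x$ there is an open $S(2k-1)$-neighborhood $U$ of $x$ with $y\notin U$. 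The goal is to extract, for each $x$, a subfamily of size $\le 2^\kappa$ whose intersection (of the $U(k)$'s, or of the $U$'s) is still $\{x\}$; that would exhibit $\psi(X)\le 2^\kappa$.

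The key step is the following selection: fix $x$ and, for every $y\ne x$, choose an open $S(2k-1)$-neighborhood $U^{x}_{y}$ of $x$ with $y\notin U^{x}_{y}$. Apply Lemma \ref{LSapIG} with $C=X\setminus\{x\}$ and the assignment $y\mapsto U^{x}_{y}$ — but this is where one has to be careful: Lemma \ref{LSapIG} gives an $S(2k-1)$-discrete set $A\subseteq C$ with $|A|\le\kappa$ and $C\subseteq\theta_{2k-1}(A)\cup\bigcup\{U^{x}_{y}:y\in A\}$. Rather than applying it once, I would iterate it: build a tree/recursion of height $\le\kappa^{+}$ (or rather repeatedly apply \v{S}apirovski\u{\i}'s lemma to the ``leftover'' $\theta_{2k-1}(A)$-parts), collecting the discrete sets $A$. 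Because each $A$ has size $\le\kappa$ and the $\theta_{2k-1}$-closure is applied at most countably... no — here is the real subtlety: $\theta_{2k-1}$ is not idempotent, so I cannot simply say the process closes off after $\kappa$ steps. Instead the standard trick is: the collection of \emph{all} discrete subsets of $X$ has size $\le 2^{\kappa}$ (each discrete set has size $\le\kappa$, and there are at most... actually $|X|$ is not yet bounded). So one argues more cleverly: the family $\{U^{x}_{y}:y\in X\setminus\{x\}\}$ together with Lemma \ref{LSapIG} shows that for each such family there is a subfamily indexed by an $S(2k-1)$-discrete set of size $\le\kappa$ that ``covers up to $\theta_{2k-1}$''; the set of possible such discrete index sets, as subsets of the neighborhoods chosen, has size $\le 2^{\kappa}$ after we first bound the relevant neighborhood families.

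Concretely I expect the cleanest route is: enumerate, for the fixed $x$, a family of open $S(2k-1)$-neighborhoods $\{U_\alpha:\alpha<\lambda\}$ of $x$ that is ``$S(2k-1)$-separating'' (for every $y\ne x$ some $U_\alpha$ omits $y$); by Lemma \ref{LSapIG}-type reasoning we may take $\lambda$ minimal. Then form all finite — or all $\le\kappa$-sized — intersections $\bigcap_{\alpha\in F}U_\alpha(k)$ for $F\in[\lambda]^{\le\kappa}$; the claim to prove is that some such family of $2^{\kappa}$ many sets already intersects down to $\{x\}$. This reduces to showing: if $Y$ is the intersection of all the $U_\alpha$'s over a ``$\theta_{2k-1}$-dense enough'' subfamily, then $Y=\{x\}$. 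The main obstacle, and the place I would spend the most care, is exactly the non-idempotence of $\theta_{2k-1}$: in the Hausdorff ($k=1$) case $\theta_1=\cl$ is idempotent and the counting is immediate, whereas here one must either (i) restrict to $S(2k-1)$-\emph{open} neighborhoods whose defining chains behave well under intersection, or (ii) run the \v{S}apirovski\u{\i} selection transfinitely and bound the number of outcomes by $2^{\kappa}$ using that each stage is governed by an $S(2k-1)$-discrete set of size $\le\kappa$. I would bet the author's actual proof does (ii): a recursion of length $\le(2^{\kappa})^{+}$ or $\le\kappa^{+}$ that stabilizes because the sequence of $\theta_{2k-1}$-closures is increasing and bounded in $\mathfrak{P}(X)$, with the size bound $2^{\kappa}$ coming from ``$\le\kappa$ many discrete sets each of size $\le\kappa$, hence $\le\kappa\cdot\kappa=\kappa$ relevant points, hence $\le 2^{\kappa}$ relevant open $S(2k-1)$-neighborhoods.''
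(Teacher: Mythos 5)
Your write-up correctly identifies the general \v{S}apirovski\u{\i}-style strategy, but it never becomes a proof, and the place where it stalls is exactly the idea the paper supplies. Two concrete problems. First, you try to feed Lemma \ref{LSapIG} the neighborhoods $U^x_y$ \emph{of the fixed point $x$}, whereas that lemma requires, for each $y\in C$, an open $S(2k-1)$-neighborhood \emph{of $y$}. The correct move is to use the $S(2k-1)$-separation to choose, for every $y\ne x$, disjoint open $S(2k-1)$-neighborhoods $U^y_x\ni x$ and $U_y\ni y$, and then apply the lemma to $C=X\setminus\{x\}$ with the family $\{U_y:y\in C\}$; this yields a \emph{single} $S(2k-1)$-discrete set $A\subseteq C$ with $|A|\le\kappa$ and $C\subseteq\theta_{2k-1}(A)\cup\bigcup\{U_y:y\in A\}$. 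Second, your proposal leaves the residual set $\theta_{2k-1}(A)$ unhandled: you gesture at iterating the selection transfinitely, worry (rightly, within your scheme) that the number of discrete subsets of $X$ cannot be counted before $|X|$ is bounded, and then merely conjecture that the author runs a recursion that stabilizes. No such recursion is needed, and non-idempotence of $\theta_{2k-1}$ never enters. The missing trick is to index everything by subsets of the fixed set $A$: for each $z\in\theta_{2k-1}(A)$, picking for every open $S(2k-1)$-neighborhood $U$ of $z$ a point of $U\cap U_z\cap A$ (nonempty since $U\cap U_z$ is again an open $S(2k-1)$-neighborhood of $z$ and $z\in\theta_{2k-1}(A)$) produces $F_z\subseteq A$ with $z\in\theta_{2k-1}(F_z)$, while $F_z\subseteq U_z$ and $U^z_x\cap U_z=\emptyset$ give $x\notin\theta_{2k-1}(F_z)$.

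With that in hand the count is immediate and is over $\mathfrak{P}(A)$, not over all discrete subsets of $X$: putting $\sF=\{F\subseteq A: x\notin\theta_{2k-1}(F)\}$, one has $|\sF|\le 2^{\kappa}$, each $X\setminus\theta_{2k-1}(F)$ is open and contains $x$, and every $z\in\theta_{2k-1}(A)$ lies in $\theta_{2k-1}(F_z)$ for some $F_z\in\sF$. Since also $x\notin\overline{U}_y$ for $y\in A$ (the open set $U^y_x$ witnesses this), the family $\{X\setminus\overline{U}_y:y\in A\}\cup\{X\setminus\theta_{2k-1}(F):F\in\sF\}$ is a family of at most $2^{\kappa}$ open sets whose intersection is $\{x\}$: any $w\ne x$ is either in some $U_y$ with $y\in A$, hence outside $X\setminus\overline{U}_y$, or in $\theta_{2k-1}(F_w)$, hence outside $X\setminus\theta_{2k-1}(F_w)$. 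So the paper's argument is a one-shot application of Lemma \ref{LSapIG} followed by a power-set count over $A$; your plan (i)/(ii), as stated, does not close this gap, and in particular the transfinite iteration you bet on is not how the bound $2^{s_{2k-1}(X)}$ is obtained.
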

\begin{proof}
Let $\kappa=s_{2k-1}(X)$ and $x\in X$. For each $y\in X\setminus\{x\}$ let $U_x^y$ and $U_y$ be open 
$S(2k-1)$-neighborhoods of $x$ and $y$, respectively, such that $U_x^y\cap U_y=\emptyset$. 
Then for the set $C=X\setminus\{x\}$ and the family $\sU=\{U_y:y\in C\}$ we can use Lemma \ref{LSchIG} to find an 
$S(2k-1)$-discrete subset $A$ of $C$ such that $|A|\le\kappa$ and 
$C\subseteq \theta_{2k-1}(A)\cup\bigcup \left\{U_y:y\in A\right\}$. 

For each $z\in \theta_{2k-1}(A)$ and for each open $S(2k-1)$-neighborhood $U$ of $z$ we can choose a point 
$z_U\in U\cap U_z\cap A$ and let us 
denote by $F_z$ the resulting set. Then $F_z\subseteq A$, 
$z\in\theta_{2k-1}(F_z)\subseteq \theta_{2k-1}(\overline{U}_z)$ 
and since $U_x^z\cap \overline{U}_z=\emptyset$ we have $x\notin \theta_{2k-1}(\overline{U}_z)$. 
Thus $x\notin \theta_{2k-1}(F_z)$.

Let $\sF=\{F:F\subseteq A, x\notin\theta_{2k-1}(F)\}$ and for each $F\in\sF$ let $U_F^x=X\setminus\theta_{2k-1}(F)$. 
Then $|\sF|\le 2^\kappa$ and $\theta_{2k-1}(A)\subseteq\bigcup\{\theta_{2k-1}(F):F\in\sF\}$. Therefore 
$\{X\setminus\overline{U}_y:y\in A\}\cup\{U_F^x:F\in\sF\}$ is an open pseudobase of $x$ with cardinality 
$\le 2^\kappa$.
\end{proof}

\begin{lemma}\label{LIG}
Let $k\in \bN^{+}$. For every $S(2k)$-space $X$, $\psi(X)\leq 2^{s_{2k}(X)}$.
\end{lemma}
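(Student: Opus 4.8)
The plan is to mimic the proof of Lemma~\ref{LHJIG} almost verbatim, substituting the even-indexed cardinal functions and closure operators for the odd-indexed ones and using Lemma~\ref{LSchIG} in place of Lemma~\ref{LSapIG}. Fix $\kappa=s_{2k}(X)$ and a point $x\in X$. Since $X$ is an $S(2k)$-space, for every $y\in X\setminus\{x\}$ Proposition~\ref{P2}(c) gives disjoint closed $S(2k)$-neighborhoods of $x$ and $y$; by Proposition~\ref{P1}(b) we may instead take open $S(2k-1)$-neighborhoods $U_x^y$ of $x$ and $U_y$ of $y$ with $\overline{U}_x^y\cap\overline{U}_y=\emptyset$. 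Apply Lemma~\ref{LSchIG} to $C=X\setminus\{x\}$ and the family $\{U_y:y\in C\}$ to obtain an $S(2k)$-discrete set $A\subseteq C$ with $|A|\le\kappa$ and $C\subseteq\theta_{2k}(A)\cup\bigcup\{\overline{U}_y:y\in A\}$.

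Next I would handle the points of $\theta_{2k}(A)$, exactly as in Lemma~\ref{LHJIG} but with closed $S(2k-1)$-neighborhoods. For each $z\in\theta_{2k}(A)$ and each open $S(2k-1)$-neighborhood $U$ of $z$ we pick a point $z_U\in\overline{U}\cap\overline{U}_z\cap A$ (such a point exists because every closed $S(2k-1)$-neighborhood of $z$, hence $\overline{U}\cap\overline{U}_z$, meets $A$ by the definition of $\theta_{2k}$), and let $F_z$ be the set of all such $z_U$. Then $F_z\subseteq A$ and $z\in\theta_{2k}(F_z)$; moreover $\theta_{2k}(F_z)\subseteq\theta_{2k}(\overline{U}_z)$, and since $\overline{U}_x^z\cap\overline{U}_z=\emptyset$ the point $x$ has a closed $S(2k-1)$-neighborhood (namely $\overline{U}_x^z$) missing $\overline{U}_z$, so $x\notin\theta_{2k}(\overline{U}_z)$ and therefore $x\notin\theta_{2k}(F_z)$.

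Finally, set $\sF=\{F\subseteq A:x\notin\theta_{2k}(F)\}$. Then $|\sF|\le 2^{|A|}\le 2^\kappa$, and by the previous paragraph $\theta_{2k}(A)\subseteq\bigcup\{\theta_{2k}(F):F\in\sF\}$. Consequently the family $\{X\setminus\overline{U}_y:y\in A\}\cup\{X\setminus\theta_{2k}(F):F\in\sF\}$ consists of open sets each containing $x$, has cardinality $\le 2^\kappa$, and its intersection is $\{x\}$: any $y\ne x$ lies either in some $\overline{U}_y$ (with $y\in A$), in which case it is excluded by the corresponding set, or in $\theta_{2k}(A)$, hence in some $\theta_{2k}(F)$ with $F\in\sF$, and is again excluded. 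Thus $\psi(X)\le 2^\kappa=2^{s_{2k}(X)}$.

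The only point requiring care — and the analogue of the subtlety already present in Lemma~\ref{LHJIG} — is the passage between open $S(2k-1)$-neighborhoods and closed $S(2k-1)$-neighborhoods via Proposition~\ref{P1}: one must make sure that when Lemma~\ref{LSchIG} is invoked the resulting covering of $C$ by the sets $\overline{U}_y$ together with $\theta_{2k}(A)$ is genuinely a covering, and that $x$ is excluded from every $\overline{U}_y$ used (which is why we chose $U_x^y, U_y$ with \emph{disjoint closures} rather than merely disjoint). No other step involves anything beyond bookkeeping with the already-established lemmas.
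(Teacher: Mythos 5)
Your proposal is correct and follows essentially the same route as the paper's own proof: choose open $S(2k-1)$-neighborhoods $U_x^y$, $U_y$ with disjoint closures, apply Lemma~\ref{LSchIG} to $C=X\setminus\{x\}$, build the sets $F_z\subseteq A$ witnessing $z\in\theta_{2k}(F_z)$ with $x\notin\theta_{2k}(F_z)$, and take the pseudobase $\{X\setminus\overline{U}_y:y\in A\}\cup\{X\setminus\theta_{2k}(F):F\in\sF\}$. The only difference is that you spell out the small verifications (existence of $z_U$, why the intersection is $\{x\}$) that the paper leaves implicit.
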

\begin{proof}
Let $\kappa=s_{2k}(X)$ and $x\in X$. For each $y\in X\setminus\{x\}$ let $U_x^y$ and $U_y$ be open 
$S(2k-1)$-neighborhoods of $x$ and $y$, respectively, such that $\overline{U}_x^y\cap\overline{U}_y=\emptyset$. 
Then for the set $C=X\setminus\{x\}$ and the family $\sU=\{U_y:y\in C\}$ we can use Lemma \ref{LSchIG} to find an 
$S(2k)$-discrete subset $A$ of $C$ such that $|A|\le\kappa$ and 
$C\subseteq \theta_{2k}(A)\cup\bigcup \left\{\overline{U}_y:y\in A\right\}$. 

For each $z\in \theta_{2k}(A)$ and for each open $S(2k-1)$-neighborhood $U$ of $z$ we can choose a point 
$z_U\in \overline{U}\cap \overline{U}_z\cap A$ and let us 
denote by $F_z$ the resulting set. Then $F_z\subseteq A$, $z\in\theta_{2k}(F_z)\subseteq \theta_{2k}(\overline{U}_z)$ 
and since $\overline{U}_x^z\cap \overline{U}_z=\emptyset$ we have $x\notin \theta_{2k}(\overline{U}_z)$. 
Thus $x\notin \theta_{2k}(F_z)$.

Let $\sF=\{F:F\subseteq A, x\notin\theta_{2k}(F)\}$ and for each $F\in\sF$ let $U_F^x=X\setminus\theta_{2k}(F)$. 
Then $|\sF|\le 2^\kappa$ and $\theta_{2k}(A)\subseteq\bigcup\{\theta_{2k}(F):F\in\sF\}$. Therefore 
$\{X\setminus\overline{U}_y:y\in A\}\cup\{U_F^x:F\in\sF\}$ is an open pseudobase of $x$ with cardinality 
$\le 2^\kappa$.
\end{proof}

As a corollary of the previous two lemmas we obtain the following theorem:

\begin{theorem}\label{TISG}
Let $n\in \mathbb{N}^{+}$. If $X$ is an $S(n)$-space, then $\left| X\right|\leq 2^{s(X)\cdot 2^{s_{n}(X)}}$.
\end{theorem}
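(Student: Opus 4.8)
The plan is to combine the two ingredients that have just been proved: the bound on the ordinary pseudocharacter $\psi(X)\le 2^{s_n(X)}$ (Lemma~\ref{LHJIG} for odd $n$, Lemma~\ref{LIG} for even $n$), and the classical Hajnal--Juh\'asz inequality $|X|\le 2^{s(X)\psi(X)}$, valid for every $T_1$-space. Since an $S(n)$-space with $n\in\bN^+$ is in particular a $T_1$-space, that classical inequality applies here without change. Substituting the pseudocharacter estimate into it gives $|X|\le 2^{s(X)\cdot 2^{s_n(X)}}$, which is exactly the asserted bound.

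More concretely, I would first record that, by the standing assumption of the paper, every space under consideration is $T_1$, so the Hajnal--Juh\'asz theorem $|X|\le 2^{s(X)\psi(X)}$ holds; this is also the $k=1$ case of Theorem~\ref{THJIG} once one notes $s_1(X)=s(X)$ and $\psi_1(X)=\psi(X)$, or one may simply cite \cite{HajJuh67}, \cite{Juh80} or \cite{Hod84}. Next I would split into the two parity cases for $n$: if $n=2k-1$ is odd, Lemma~\ref{LHJIG} yields $\psi(X)\le 2^{s_{2k-1}(X)}=2^{s_n(X)}$; if $n=2k$ is even, Lemma~\ref{LIG} yields $\psi(X)\le 2^{s_{2k}(X)}=2^{s_n(X)}$. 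In either case $\psi(X)\le 2^{s_n(X)}$. Then $s(X)\psi(X)\le s(X)\cdot 2^{s_n(X)}$, and monotonicity of $\kappa\mapsto 2^\kappa$ gives $|X|\le 2^{s(X)\psi(X)}\le 2^{s(X)\cdot 2^{s_n(X)}}$, completing the argument.

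There is essentially no obstacle here, since the statement is presented as a corollary of the two preceding lemmas; the only point requiring a moment's care is making sure the Hajnal--Juh\'asz inequality is invoked in a form that applies to $T_1$-spaces (it does), and that the cardinal arithmetic $s(X)\cdot 2^{s_n(X)}\ge s(X)\psi(X)$ is used correctly together with monotonicity of exponentiation. One should also note, for completeness, that the product $s(X)\cdot 2^{s_n(X)}$ already absorbs $s(X)$, and in fact $s_n(X)\ge \aleph_0$ and $s_n(X)\le s(X)$ for every $n$, so $2^{s_n(X)}\le 2^{s(X)}$ and the bound in Theorem~\ref{TISG} is a genuine refinement of the classical $|X|\le 2^{2^{s(X)}}$ only when $s_n(X)<s(X)$; but establishing that sharpness is not needed for the proof itself.

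\begin{proof}
Since $X$ is an $S(n)$-space with $n\in\bN^+$, it is in particular a $T_1$-space, so by the theorem of Hajnal and Juh\'asz (see \cite{HajJuh67}, \cite[Theorem 2.15]{Juh80} or \cite{Hod84}) we have $\left|X\right|\le 2^{s(X)\psi(X)}$. If $n=2k-1$ for some $k\in\bN^+$, then Lemma~\ref{LHJIG} gives $\psi(X)\le 2^{s_{2k-1}(X)}=2^{s_n(X)}$; if $n=2k$ for some $k\in\bN^+$, then Lemma~\ref{LIG} gives $\psi(X)\le 2^{s_{2k}(X)}=2^{s_n(X)}$. In either case $\psi(X)\le 2^{s_n(X)}$, hence $s(X)\psi(X)\le s(X)\cdot 2^{s_n(X)}$ and therefore $\left|X\right|\le 2^{s(X)\psi(X)}\le 2^{s(X)\cdot 2^{s_n(X)}}$.
\end{proof}
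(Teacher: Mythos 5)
Your proposal is correct and follows essentially the same route as the paper: the paper's proof also cites the Hajnal--Juh\'asz inequality $|X|\le 2^{s(X)\psi(X)}$ for $T_1$-spaces (the $k=1$ case of Theorem~\ref{THJIG}), together with Lemma~\ref{LHJIG} for odd $n$ and Lemma~\ref{LIG} for even $n$. Your write-up just makes the parity split and the cardinal arithmetic explicit, which is fine.
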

\begin{proof}
The claim follows directly from the fact that every $S(n)$-space is a $T_1$-space, Hajnal and Juh\'asz theorem 
(Theorem \ref{THJIG} for $k=1$), Lemma \ref{LHJIG} for $n$-odd and Lemma \ref{LIG} for $n$-even.
\end{proof}

When $n=1$ in Theorem \ref{TISG} we obtain Hajnal and Juh\'asz theorem that if $X$ is a Hausdorff 
space, then $|X|\le 2^{2^{s(X)}}$ and when $n=2$, Theorem \ref{TISG} states that if $X$ is a Urysohn space, then 
$|X|\le 2^{s(X)\cdot 2^{Us(X)}}$.

We note that in \cite[Lemma 11]{Sch93} the author claims that if $X$ is a Urysohn space, then $\psi_c(X)\le 2^{Us(X)}$, 
or in our notation, $\psi_2(X)\le 2^{s_2(X)}$. Based on that lemma the author concluded that for every Urysohn space $X$ 
we have $|X|\le 2^{2^{Us(X)}}$. Unfortunately, there is a gap in the proof of Lemma 11 in \cite{Sch93} and therefore 
the question whether or not either one of both of these claims is true is open. What we can prove in that relation, in 
addition to the case $n=2$ of Theorem \ref{TISG}, is the case $k=1$ of Lemma \ref{LIG2G1} and Theorem \ref{TIG2G1}
(see Corollaries \ref{CIG1} and \ref{CIG2}).

\begin{lemma}\label{LIG2G1}
Let $k\in \bN^{+}$. For every $S(3k)$-space $X$, $\psi_{2k}(X)\leq 2^{s_{2k}(X)}$.
\end{lemma}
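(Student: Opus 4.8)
The statement to prove is: for every $S(3k)$-space $X$, $\psi_{2k}(X)\leq 2^{s_{2k}(X)}$. The plan is to mimic the proof of Lemma \ref{LIG}, which gave the weaker conclusion $\psi(X)\leq 2^{s_{2k}(X)}$ for $S(2k)$-spaces, but now exploiting the stronger separation available in an $S(3k)$-space to upgrade the open pseudobase at a point into a family of $S(2k-1)$-open neighborhoods whose \emph{closures} intersect in $\{x\}$ (which is what $\psi_{2k}$ requires). Fix $x\in X$ and $\kappa=s_{2k}(X)$. The key observation is that in an $S(3k)$-space we can, for each $y\neq x$, choose open $S(2k-1)$-neighborhoods $U_x^y$ of $x$ and $U_y$ of $y$ not merely with $\overline{U}_x^y\cap\overline{U}_y=\emptyset$, but with $\theta_{2k-1}(\overline{U}_x^y)\cap\overline{U}_y=\emptyset$ (equivalently $\theta_{2k}$ of a suitable neighborhood missing $\overline{U}_y$). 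This is possible because $S(3k)$ lets one string together a chain of length roughly $k$ of closures on the $x$-side \emph{plus} a $\theta_{2k-1}$-closure, while still leaving room for an $S(2k-1)$-neighborhood of $y$; concretely one separates $x$ and $y$ by disjoint $S(3k)$-type neighborhoods and partitions the chain so that the first part forms a $\theta_{2k}$-closed set around $x$ that is disjoint from an $S(2k-1)$-neighborhood of $y$.

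First I would set up, for each $y\in C:=X\setminus\{x\}$, the pair $U_x^y,U_y$ as above with the strengthened disjointness, and apply Lemma \ref{LSchIG} to $C$ and $\mathcal{U}=\{U_y:y\in C\}$ to get an $S(2k)$-discrete set $A\subseteq C$ with $|A|\leq\kappa$ and $C\subseteq\theta_{2k}(A)\cup\bigcup\{\overline{U}_y:y\in A\}$. Next, exactly as in Lemma \ref{LIG}, for each $z\in\theta_{2k}(A)$ I would build a set $F_z\subseteq A$ by choosing, for each closed $S(2k-1)$-neighborhood $U$ of $z$, a point $z_U\in\overline{U}\cap\overline{U}_z\cap A$; then $z\in\theta_{2k}(F_z)\subseteq\theta_{2k}(\overline{U}_z)$. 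The point of the stronger separation is that now $x\notin\theta_{2k}(\theta_{2k}(\overline{U}_z))$ is not what we want — rather, we need that $X\setminus\theta_{2k}(F_z)$ can be taken to be an $S(2k-1)$-open set $U_{F_z}^x$ \emph{whose closure still misses $z$}, i.e. $z\notin\overline{U_{F_z}^x}$. Collecting $\mathcal{F}=\{F\subseteq A:x\notin\theta_{2k}(F)\}$ (so $|\mathcal{F}|\leq 2^\kappa$), together with the sets $X\setminus\overline{U}_y$ for $y\in A$, I would obtain a family of $S(2k-1)$-open neighborhoods of $x$ of size $\leq 2^\kappa$; the verification that the intersection of their \emph{closures} equals $\{x\}$ uses that any $z\neq x$ lands either in some $\overline{U}_y$ (and then is outside the closure of $X\setminus\overline{U}_y$, which requires $U_y$ chosen $S(2k)$-openly so that $\overline{U}_y$ has $\theta_{2k-1}$-open-ish complement behavior) or in some $\theta_{2k}(F_z)$ with $x\notin\theta_{2k}(F_z)$, and the $S(3k)$ hypothesis guarantees the complement can be replaced by a slightly smaller $S(2k-1)$-open set with the right closure-disjointness from $z$.

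The main obstacle — the place where $S(3k)$ rather than $S(2k)$ is genuinely needed — is precisely this last closure-thickening step: in Lemma \ref{LIG} it was enough that $U_F^x=X\setminus\theta_{2k}(F)$ is open and contains $x$, but here we need a $\theta_{2k-1}$-or-$\theta_{2k}$-shrunk version of it to be an $S(2k-1)$-open neighborhood of $x$ whose closure is still inside $X\setminus\{z\}$ for every $z$ being killed by $F$. Since $\theta_{2k}(F)$ is only a closed set (not $\theta_{2k}$-closed, as the operator is not idempotent), its complement need not be $\theta_{2k-1}$-open, so one has to produce, around $x$, an $S(2k-1)$-neighborhood $U$ with $\theta_{2k-1}(\overline{U})\subseteq X\setminus\{z\}$ — i.e. $x$ is $S(3k-1)$- or $S(3k)$-separated from $z$ in the relevant chain — and this is exactly what the $S(3k)$-space assumption supplies for every pair $x\neq z$. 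I would make this precise by, at the very start, fixing for each $z\neq x$ a chain of open sets realizing $S(3k)$-separation of $x$ from $z$, splitting the chain as (first $k$ sets giving a closed $S(2k-1)$-neighborhood $N_z$ of $x$) $\cup$ (the remaining $\approx 2k$ sets), so that $\theta_{2k}(N_z)$ still misses $z$, and then threading $N_z$ through the construction in place of the crude complement $X\setminus\theta_{2k}(F)$. Everything else — the cardinal arithmetic $|\mathcal{F}|\leq 2^\kappa$, the covering of $\theta_{2k}(A)$ by $\{\theta_{2k}(F):F\in\mathcal{F}\}$, and the final pseudobase-with-closures count — is routine and parallels Lemmas \ref{LHJIG} and \ref{LIG} verbatim.
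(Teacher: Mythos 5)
Your overall architecture does match the paper's: split the $S(3k)$-chain between $x$ and $y$ as $k$ sets around $x$ plus $2k$ sets around $y$, apply Lemma \ref{LSchIG} to $\{U_y:y\in C\}$, build the sets $F_z\subseteq A$, and index the final family by subsets of $A$. But the decisive verification, as you state it, has two concrete defects. First, you put the sets $X\setminus\overline{U}_y$ ($y\in A$) into the final family and claim that a point $z\in\overline{U}_y$ lies outside $\overline{X\setminus\overline{U}_y}$; this is false unless $\overline{U}_y$ is clopen, since $\overline{X\setminus\overline{U}_y}$ contains the boundary of $\overline{U}_y$, and no choice of $U_y$ (``chosen $S(2k)$-openly'') repairs it. The paper instead handles this case with the $x$-side neighborhoods themselves: with the split $x\in U_1^y(x)\subseteq\ldots\subseteq U_k^y(x)$, $y\in U_1(y)\subseteq\ldots\subseteq U_{2k}(y)$ and $\overline{U}_k^y(x)\cap\overline{U}_{2k}(y)=\emptyset$, the member $U_k^y(x)$ has closure missing all of $\overline{U}_k(y)$. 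You have these sets (your $U_x^y$) available but never place them in the family.

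Second, and more seriously, for points $z\in\theta_{2k}(A)$ you only ever impose per-point conditions --- ``closure still misses $z$'', ``$\theta_{2k-1}(\overline{U})\subseteq X\setminus\{z\}$'', and finally ``$\theta_{2k}(N_z)$ still misses $z$''. Since the family must be indexed by the $2^\kappa$ sets $F\in\sF$ (indexing by $z$ would destroy the cardinality bound), and distinct points $z\ne z'$ of $\theta_{2k}(A)$ may generate the same set $F_z=F_{z'}$, the member attached to $F$ must have its closure disjoint from the whole set $\theta_{2k}(F)$, not merely from the one point used to build it. That is exactly what the paper reads off the chain: $\theta_{2k}(F_z)\subseteq\theta_{2k}(\overline{U}_k(z))\subseteq\overline{U}_{2k}(z)$, which is disjoint from $\overline{U}_k^z(x)$, so the single member $U_{F_z}^x:=U_k^z(x)$ excludes every point of $\theta_{2k}(F_z)$ at once. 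Note that the extra $k$ sets of the $S(3k)$-chain must be spent on the $z$-side, to trap the $\theta_{2k}$-closure of $\overline{U}_k(z)$ inside $\overline{U}_{2k}(z)$; your formulation consistently spends them on the $x$-side ($\theta_{2k-1}(\overline{U}_x^y)\cap\overline{U}_y=\emptyset$, ``$\theta_{2k}(N_z)$ misses $z$''), and that form does not imply the needed one ($\overline{U}_x^z\cap\theta_{2k}(\overline{U}_z)=\emptyset$) by any general duality between $\theta$-closures --- it must be derived from the chain in the other direction. So as written the sketch has a genuine gap at its key step, even though all the ingredients of the paper's argument are present in your setup.
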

\begin{proof}
Let $\kappa=s_{2k}(X)$ and $x\in X$. Since $X$ is an $S(3k)$-space, for each $y\in X\setminus\{x\}$ let 
$U_1^y(x),\ldots,U_{k}^y(x)$, $U_1(y),\ldots,U_{2k}(y)$ be open sets in $X$ such that 
$x\in U_1^y(x)\subseteq\overline{U}_1^y(x)\subseteq\ldots\subseteq U_{k}^y(x)$, 
$y\in U_1(y)\subseteq\overline{U}_1(y)\subseteq\ldots\subseteq U_{2k}(y)$ and 
$\overline{U}_{k}^y(x)\cap \overline{U}_{2k}(y)=\emptyset$. Then for each $y\in C$, 
${U}_{k}(y)$ is an open $S(2k-1)$-neighborhood of $y$. 
Therefore for the set $C=X\setminus\{x\}$ and the family $\sU=\{U_{k}(y):y\in C\}$ we can use Lemma \ref{LSchIG} to 
find an $S(2k)$-discrete subset $A$ of $C$ such that $|A|\le\kappa$ and 
$C\subseteq \theta_{2k}(A)\cup\bigcup \left\{\overline{U}_{k}(y):y\in A\right\}$. 

For each $z\in \theta_{2k}(A)$ and for each open $S(2k-1)$-neighborhood $U$ of $z$ we can choose a point 
$z_U\in \overline{U}\cap \overline{U}_k(z)\cap A$ and let us 
denote by $F_z$ the resulting set. Then $F_z\subseteq A$, 
$z\in\theta_{2k}(F_z)\subseteq \theta_{2k}(\overline{U}_k(z))\subseteq\overline{U}_{2k}(z)$ 
and since $\overline{U}_{k}^z(x)\cap \overline{U}_{2k}(z)=\emptyset$ we have 
$x\notin \theta_{2k}(\overline{U}_{2k}(z))$, hence $x\notin \theta_{2k}(F_z)$ and clearly, 
$\overline{U}_{k}^z(x)\cap\theta_{2k}(F_z)=\emptyset$.

Let $\sF=\{F:F\subseteq A, x\notin\theta_{2k}(F)\}$ and for each $F\in\sF$ let $U_F^x$ be an open 
$S(2k-1)$-neighborhood of $x$ 
such that $\overline{U}_F^x\cap\theta_{2k}(F)=\emptyset$. 
Then $|\sF|\le 2^\kappa$ and $\theta_{2k}(A)\subseteq\bigcup\{\theta_{2k}(F):F\in\sF\}$. Therefore the existence of the 
family $\{\overline{U}_k^y(x):y\in A\}\cup\{\overline{U}_F^x:F\in\sF\}$ shows that the $S(2k)$-pseudocharacter at $x$ 
has cardinality $\le 2^\kappa$.
\end{proof}

\begin{corollary}\label{CIG1}
For every $S(3)$-space $X$, $\psi_2(X)\leq 2^{s_{2}(X)}$.
\end{corollary}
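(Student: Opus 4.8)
The plan is to obtain Corollary \ref{CIG1} as the immediate special case $k=1$ of Lemma \ref{LIG2G1}. Since an $S(3)$-space is an $S(3\cdot 1)$-space and $s_2(X)=s_{2\cdot 1}(X)$, while $\psi_2(X)=\psi_{2\cdot 1}(X)$, the inequality $\psi_{2k}(X)\le 2^{s_{2k}(X)}$ from Lemma \ref{LIG2G1} specializes directly to $\psi_2(X)\le 2^{s_2(X)}$. So the entire content is already packaged in the lemma, and the proof should simply record this substitution rather than repeating any construction.

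Concretely, the proof I would write is: let $k=1$ in Lemma \ref{LIG2G1}. Then $X$ being an $S(3)$-space means $X$ is an $S(3k)$-space, and the conclusion of the lemma reads $\psi_{2}(X)\le 2^{s_{2}(X)}$, which is exactly what is claimed. One sentence suffices, and there is nothing to verify beyond matching indices. I would perhaps add a remark connecting this to the notation of Schr\"oder, since the introduction stresses that this gives a partial answer to the gap in \cite[Lemma 11]{Sch93}: in the classical notation, $\psi_2(X)=\psi_c(X)$ and $s_2(X)=Us(X)$, so the corollary says $\psi_c(X)\le 2^{Us(X)}$ holds whenever $X$ is $S(3)$ — a weakening of Schr\"oder's unproven claim for all Urysohn spaces.

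There is essentially no obstacle here; the only "work" is organizational, namely being careful that the index conventions ($S(2k)$-pseudocharacter, $S(2k)$-spread, $S(3k)$-space) line up when $k=1$. The substantive mathematics — the use of Lemma \ref{LSchIG} to produce an $S(2k)$-discrete set $A$ with $|A|\le s_{2k}(X)$, the bookkeeping with the chains $U_i^y(x)$ and $U_i(y)$ coming from the $S(3k)$-separation, and the packaging of the family $\{F:F\subseteq A,\ x\notin\theta_{2k}(F)\}$ of size at most $2^{s_{2k}(X)}$ into an $S(2k)$-pseudocharacter witness at $x$ — is all carried out in Lemma \ref{LIG2G1} and is simply inherited. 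So the proof of Corollary \ref{CIG1} is a one-liner: apply Lemma \ref{LIG2G1} with $k=1$.

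\begin{proof}
Apply Lemma \ref{LIG2G1} with $k=1$: every $S(3)$-space is an $S(3\cdot 1)$-space, so $\psi_{2}(X)\le 2^{s_{2}(X)}$.
\end{proof}
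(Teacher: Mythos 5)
Your proposal is correct and matches the paper exactly: the paper states Corollary \ref{CIG1} as the immediate case $k=1$ of Lemma \ref{LIG2G1}, with no further argument needed, and your index-matching ($S(3k)=S(3)$, $\psi_{2k}=\psi_2$, $s_{2k}=s_2$ when $k=1$) is precisely the intended justification. The added remark translating the statement into Schr\"oder's notation ($\psi_c(X)\le 2^{Us(X)}$ for $S(3)$-spaces) is consistent with the paper's discussion of the gap in \cite[Lemma 11]{Sch93}.
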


For the case $k=1$ of the following lemma see \cite[Proposition 4.11]{Hod84}.

\begin{lemma}\label{LIG2G2}
Let $k\in \bN^{+}$. For every $S(3k-2)$-space $X$, $\psi_{2k-1}(X)\leq 2^{s_{2k-1}(X)}$.
\end{lemma}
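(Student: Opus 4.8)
The plan is to run the argument of Lemma~\ref{LIG2G1} in its ``odd'' version: use open $S(2k-1)$-neighborhoods in place of closed ones, the operator $\theta_{2k-1}$ in place of $\theta_{2k}$, and Lemma~\ref{LSapIG} in place of Lemma~\ref{LSchIG}. The only genuinely new feature is that the hypothesis can be weakened from $S(3k)$ to $S(3k-2)$, because two of the three chains of open sets that occur can be arranged to share a layer. Fix $\kappa=s_{2k-1}(X)$ and a point $x\in X$.

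First I would prove the separation statement that drives everything: for each $y\in X\setminus\{x\}$ there are an open $S(2k-1)$-neighborhood $G_y$ of $x$ and an open $S(2k-1)$-neighborhood $H_y$ of $y$ with $G_y\cap\theta_{2k-1}(H_y)=\emptyset$. Since $X$ is an $S(3k-2)$-space, choose open sets $W_1\subseteq\overline{W}_1\subseteq W_2\subseteq\dots\subseteq\overline{W}_{3k-3}\subseteq W_{3k-2}$ with $x\in W_1$ and $y\notin\overline{W}_{3k-2}$. Put $G_y:=W_k$; it is an open $S(2k-1)$-neighborhood of $x$ via the chain $W_1,\dots,W_k$. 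Put $H_y:=X\setminus\overline{W}_{2k-1}$; it is an open $S(2k-1)$-neighborhood of $y$ via the chain $X\setminus\overline{W}_{3k-2}\subseteq X\setminus\overline{W}_{3k-3}\subseteq\dots\subseteq X\setminus\overline{W}_{2k-1}$, which consists of exactly $k$ open sets (the indices running from $3k-2$ down to $2k-1$), has the required closure inclusions, and has $y$ in its first term. Now set $P_i:=X\setminus\overline{W}_{2k-i}$ for $i=1,\dots,k$; then $H_y=P_1\subseteq\overline{P}_1$ and $\overline{P}_i\subseteq X\setminus W_{2k-i}\subseteq X\setminus\overline{W}_{2k-i-1}=P_{i+1}$, so for any $w\notin\overline{P}_k$ the set $X\setminus\overline{P}_1$, the last term of the chain $X\setminus\overline{P}_k\subseteq\dots\subseteq X\setminus\overline{P}_1$, is an open $S(2k-1)$-neighborhood of $w$ disjoint from $H_y=P_1$; hence $\theta_{2k-1}(H_y)\subseteq\overline{P}_k=\overline{X\setminus\overline{W}_k}\subseteq X\setminus W_k=X\setminus G_y$. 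The point of the hypothesis is visible in the bookkeeping: $G_y$ occupies layers $1,\dots,k$, the bound on $\theta_{2k-1}(H_y)$ occupies layers $k,\dots,2k-1$, and $H_y$ occupies layers $2k-1,\dots,3k-2$, so the overlaps at layer $k$ and at layer $2k-1$ reduce the requirement from $3k$ to $3k-2$ layers.

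The rest is the usual \v{S}apirovski\v{i}-type machinery. Apply Lemma~\ref{LSapIG} to $C:=X\setminus\{x\}$ and $\{H_y:y\in C\}$ to get an $S(2k-1)$-discrete $A\subseteq C$ with $|A|\le\kappa$ and $C\subseteq\theta_{2k-1}(A)\cup\bigcup\{H_y:y\in A\}$. For each $z\in\theta_{2k-1}(A)$ and each open $S(2k-1)$-neighborhood $U$ of $z$ pick $z_U\in U\cap H_z\cap A$ (legitimate, since $U\cap H_z$ is again an open $S(2k-1)$-neighborhood of $z$ and $z\in\theta_{2k-1}(A)$) and set $F_z:=\{z_U:U\}$. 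Then $F_z\subseteq H_z\cap A$, $z\in\theta_{2k-1}(F_z)$, and by monotonicity $\theta_{2k-1}(F_z)\subseteq\theta_{2k-1}(H_z)\subseteq X\setminus G_z$, so the open $S(2k-1)$-neighborhood $G_z$ of $x$ misses $\theta_{2k-1}(F_z)$. Put $\sF:=\{F\subseteq A:\text{some open }S(2k-1)\text{-neighborhood of }x\text{ misses }\theta_{2k-1}(F)\}$, fix such a neighborhood $N_F$ for each $F\in\sF$, and note $|\sF|\le 2^{|A|}\le 2^{\kappa}$. The family $\{G_w:w\in A\}\cup\{N_F:F\in\sF\}$ then consists of at most $2^{\kappa}$ open $S(2k-1)$-neighborhoods of $x$ with intersection $\{x\}$: for $y\ne x$ we have $y\in H_w$ for some $w\in A$ (and then $y\notin G_w$, since $G_w\cap H_w\subseteq G_w\cap\theta_{2k-1}(H_w)=\emptyset$) or $y\in\theta_{2k-1}(A)$ (and then $F_y\in\sF$, witnessed by $G_y$, with $y\in\theta_{2k-1}(F_y)$, so $y\notin N_{F_y}$). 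Thus $\psi_{2k-1}(X)\le 2^{\kappa}=2^{s_{2k-1}(X)}$.

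The part I expect to be the real obstacle is the separation statement of the second paragraph — above all, making the layer count precise, i.e. verifying that the chain used to bound $\theta_{2k-1}(H_y)$ can indeed share its top layer with the outer layer of $H_y$ and be ``targeted'' at $G_y$, so that $3k-2$ layers really are enough (the naive bookkeeping would ask for $3k$, or $3k-1$ with only one overlap) — and, relatedly, checking that the degenerate case $k=1$, in which $X$ is merely Hausdorff, the $W$-chain has length one, and $\theta_{2k-1}$ is the ordinary closure, is covered verbatim by the same formulas.
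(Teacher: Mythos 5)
Your proof is correct and takes essentially the same route as the paper's: the same splitting of the $S(3k-2)$-separation of $x$ and $y$ into an open $S(2k-1)$-neighborhood of $x$ and one of $y$ whose $\theta_{2k-1}$-closure misses it (you extract it from a single chain of length $3k-2$ by taking complements, while the paper writes the separation directly in two-chain form), followed by Lemma~\ref{LSapIG}, the sets $F_z$, and the final family $\{G_w:w\in A\}\cup\{N_F:F\in\sF\}$. Your definition of $\sF$ (demanding an open $S(2k-1)$-neighborhood of $x$ missing $\theta_{2k-1}(F)$, rather than merely $x\notin\theta_{2k-1}(F)$) is in fact a slightly more careful formulation than the paper's wording, and is exactly what the final step needs.
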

\begin{proof}
Let $\kappa=s_{2k-1}(X)$ and $x\in X$. Since $X$ is an $S(3k-2)$-space, for each $y\in X\setminus\{x\}$ let 
$U_1^y(x),\ldots,U_{k}^y(x)$, $U_1(y),\ldots,U_{2k-1}(y)$ be open sets in $X$ such that 
$x\in U_1^y(x)\subseteq\overline{U}_1^y(x)\subseteq\ldots\subseteq U_{k}^y(x)$, 
$y\in U_1(y)\subseteq\overline{U}_1(y)\subseteq\ldots\subseteq U_{2k-1}(y)$ and 
${U}_{k}^y(x)\cap \overline{U}_{2k-1}(y)=\emptyset$. Then for each $y\in C$, 
${U}_{k}(y)$ is an open $S(2k-1)$-neighborhood of $y$. 
Therefore for the set $C=X\setminus\{x\}$ and the family $\sU=\{U_{k}(y):y\in C\}$ we can use Lemma \ref{LSapIG} to 
find an $S(2k-1)$-discrete subset $A$ of $C$ such that $|A|\le\kappa$ and 
$C\subseteq \theta_{2k-1}(A)\cup\bigcup \left\{{U}_{k}(y):y\in A\right\}$. 

For each $z\in \theta_{2k-1}(A)$ and for each open $S(2k-1)$-neighborhood $U$ of $z$ we can choose a point 
$z_U\in {U}\cap {U}_k(z)\cap A$ and let us 
denote by $F_z$ the resulting set. Then $F_z\subseteq A$, 
$z\in\theta_{2k-1}(F_z)\subseteq \theta_{2k-1}(\overline{U}_k(z))\subseteq\overline{U}_{2k-1}(z)$ 
and since ${U}_{k}^z(x)\cap \overline{U}_{2k-1}(z)=\emptyset$ we have 
$x\notin \theta_{2k-1}(\overline{U}_{2k-1}(z))$, hence $x\notin \theta_{2k-1}(F_z)$ and clearly, 
${U}_{k}^z(x)\cap\theta_{2k-1}(F_z)=\emptyset$.

Let $\sF=\{F:F\subseteq A, x\notin\theta_{2k-1}(F)\}$ and for each $F\in\sF$ let $U_F^x$ be an open 
$S(2k-1)$-neighborhood of $x$ 
such that ${U}_F^x\cap\theta_{2k-1}(F)=\emptyset$. 
Then $|\sF|\le 2^\kappa$ and $\theta_{2k-1}(A)\subseteq\bigcup\{\theta_{2k-1}(F):F\in\sF\}$. Therefore the existence of 
the family $\{{U}_k^y(x):y\in A\}\cup\{{U}_F^x:F\in\sF\}$ shows that the $S(2k-1)$-pseudocharacter at 
$x$ has cardinality $\le 2^\kappa$.
\end{proof}

Similarly, for the case $3k-1$, $k\in \bN^{+}$, one can prove the following:

\begin{lemma}\label{LIG2G3}
Let $k\in \bN^{+}$. For every $S(3k-1)$-space $X$, $\psi_{2k-1}(X)\leq 2^{s_{2k}(X)}$.
\end{lemma}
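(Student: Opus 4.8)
The plan is to transcribe the proof of Lemma~\ref{LIG2G1} almost verbatim: the only structural change is that every closure occurring on the ``$x$-side'' of the separation data is replaced by the corresponding open set, which is exactly the modification that lowers the separation hypothesis from $S(3k)$ to $S(3k-1)$ and, correspondingly, weakens the conclusion from $\psi_{2k}$ to $\psi_{2k-1}$. Concretely, I would fix $x\in X$, set $\kappa=s_{2k}(X)$, and for each $y\in X\setminus\{x\}$ produce open sets $U_1^y(x)\subseteq\overline{U}_1^y(x)\subseteq\cdots\subseteq U_k^y(x)$, so that $U_k^y(x)$ is an open $S(2k-1)$-neighborhood of $x$, together with open sets $U_1(y)\subseteq\overline{U}_1(y)\subseteq\cdots\subseteq U_{2k}(y)$, so that $U_k(y)$ is an open $S(2k-1)$-neighborhood of $y$, satisfying $U_k^y(x)\cap\overline{U}_{2k}(y)=\emptyset$. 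The one thing to check here is that such a configuration is available in every $S(3k-1)$-space: given an $S(3k-1)$-separation of $x$ from $\{y\}$ by open sets $W_1,\dots,W_{3k-1}$ with $x\in W_1$, $\overline{W}_i\subseteq W_{i+1}$ and $y\notin\overline{W}_{3k-1}$, one takes $U_i^y(x)=W_i$ for $i\le k$ and $U_j(y)=X\setminus\overline{W}_{3k-j}$ for $j\le 2k$, whence $\overline{U}_{2k}(y)\subseteq X\setminus W_k=X\setminus U_k^y(x)$; keeping $U_k^y(x)$ rather than $\overline{U}_k^y(x)$ in the disjointness condition is precisely what saves one link of this chain compared with Lemma~\ref{LIG2G1}.

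Next I would apply Lemma~\ref{LSchIG} to $C=X\setminus\{x\}$ and the family $\{U_k(y):y\in C\}$ to obtain an $S(2k)$-discrete set $A\subseteq C$ with $|A|\le\kappa$ and $C\subseteq\theta_{2k}(A)\cup\bigcup\{\overline{U}_k(y):y\in A\}$. Exactly as in Lemma~\ref{LIG2G1}, for each $z\in\theta_{2k}(A)$ I would build a set $F_z\subseteq A\cap\overline{U}_k(z)$ with $z\in\theta_{2k}(F_z)$, by choosing for every open $S(2k-1)$-neighborhood $U$ of $z$ a point $z_U\in\overline{U\cap U_k(z)}\cap A$. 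The key containment is then $\theta_{2k}(\overline{U}_k(z))\subseteq\overline{U}_{2k}(z)$: if $w\notin\overline{U}_{2k}(z)$, then $X\setminus U_{k+1}(z)$ is a closed $S(2k-1)$-neighborhood of $w$, with witnessing chain $X\setminus\overline{U}_{2k}(z),\dots,X\setminus\overline{U}_{k+1}(z)$, and it misses $\overline{U}_k(z)$, so $w\notin\theta_{2k}(\overline{U}_k(z))$. Consequently $\theta_{2k}(F_z)\subseteq\overline{U}_{2k}(z)$, and since $x\in U_k^z(x)$ with $U_k^z(x)\cap\overline{U}_{2k}(z)=\emptyset$ we get both $x\notin\theta_{2k}(F_z)$ and $U_k^z(x)\cap\theta_{2k}(F_z)=\emptyset$.

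Finally, put $\sF=\{F\subseteq A:x\notin\theta_{2k}(F)\}$, so $|\sF|\le 2^{\kappa}$, and for each $F\in\sF$ that admits one pick an open $S(2k-1)$-neighborhood $U_F^x$ of $x$ with $U_F^x\cap\theta_{2k}(F)=\emptyset$; for $F=F_z$ one may simply take $U_F^x=U_k^z(x)$. Then $\{U_k^y(x):y\in A\}\cup\{U_F^x:F\in\sF\}$ is a family of $S(2k-1)$-open neighborhoods of $x$ of cardinality at most $\kappa+2^{\kappa}=2^{\kappa}$, and its intersection is $\{x\}$: an arbitrary $y\in X\setminus\{x\}$ either lies in some $\overline{U}_k(w)\subseteq\overline{U}_{2k}(w)$ with $w\in A$, and is then excluded by $U_k^w(x)$, or lies in $\theta_{2k}(A)$, hence in $\theta_{2k}(F_y)$, and is then excluded by $U_{F_y}^x$. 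This yields $\psi_{2k-1}(X)\le 2^{\kappa}=2^{s_{2k}(X)}$.

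The step I expect to require the most care is the index bookkeeping in the two chain arguments: verifying that an $S(3k-1)$-separation really produces the asymmetric configuration above, and establishing $\theta_{2k}(\overline{U}_k(z))\subseteq\overline{U}_{2k}(z)$. Everything else is a direct copy of the proof of Lemma~\ref{LIG2G1} (and, on the ``pseudobase'' side, of Lemma~\ref{LIG2G2}), with the harmless point — already implicit in Lemma~\ref{LIG2G1} — that the neighborhood $U_F^x$ need only be selected for those $F\in\sF$ of the form $F_z$, since these are the only members of $\sF$ that actually occur when one checks that the displayed family has intersection $\{x\}$.
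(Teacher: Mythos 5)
Your proof is correct and follows essentially the same route as the paper's: the same asymmetric separation configuration (an open $S(2k-1)$-chain at $x$ and a $2k$-chain at $y$ extracted from an $S(3k-1)$-separation), an application of Lemma~\ref{LSchIG}, the sets $F_z$ with $\theta_{2k}(F_z)\subseteq\overline{U}_{2k}(z)$, and the final family $\{U_k^y(x):y\in A\}\cup\{U_F^x:F\in\sF\}$. The extra details you supply (verifying the configuration exists in an $S(3k-1)$-space, proving $\theta_{2k}(\overline{U}_k(z))\subseteq\overline{U}_{2k}(z)$, and noting that $U_F^x$ need only be chosen for the sets $F$ of the form $F_z$) are points the paper passes over silently, and your handling of them is sound.
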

\begin{proof}
Let $\kappa=s_{2k}(X)$ and $x\in X$. Since $X$ is an $S(3k-1)$-space, for each $y\in X\setminus\{x\}$ let 
$U_1^y(x),\ldots,U_{k}^y(x)$, $U_1(y),\ldots,U_{2k}(y)$ be open sets in $X$ such that 
$x\in U_1^y(x)\subseteq\overline{U}_1^y(x)\subseteq\ldots\subseteq U_{k}^y(x)$, 
$y\in U_1(y)\subseteq\overline{U}_1(y)\subseteq\ldots\subseteq U_{2k}(y)$ and 
${U}_{k}^y(x)\cap \overline{U}_{2k}(y)=\emptyset$. Then for each $y\in C$, 
${U}_{k}(y)$ is an open $S(2k-1)$-neighborhood of $y$. 
Therefore for the set $C=X\setminus\{x\}$ and the family $\sU=\{U_{k}(y):y\in C\}$ we can use Lemma \ref{LSchIG} to 
find an $S(2k)$-discrete subset $A$ of $C$ such that $|A|\le\kappa$ and 
$C\subseteq \theta_{2k}(A)\cup\bigcup \left\{\overline{U}_{k}(y):y\in A\right\}$. 

For each $z\in \theta_{2k}(A)$ and for each open $S(2k-1)$-neighborhood $U$ of $z$ we can choose a point 
$z_U\in \overline{U}\cap \overline{U}_k(z)\cap A$ and let us 
denote by $F_z$ the resulting set. Then $F_z\subseteq A$, 
$z\in\theta_{2k}(F_z)\subseteq \theta_{2k}(\overline{U}_k(z))\subseteq\overline{U}_{2k}(z)$ 
and since ${U}_{k}^z(x)\cap \overline{U}_{2k}(z)=\emptyset$ we have 
$x\notin \theta_{2k-1}(\overline{U}_{2k}(z))$, hence $x\notin \theta_{2k-1}(F_z)$ and clearly, 
${U}_{k}^z(x)\cap\theta_{2k}(F_z)=\emptyset$.

Let $\sF=\{F:F\subseteq A, x\notin\theta_{2k-1}(F)\}$ and for each $F\in\sF$ let $U_F^x$ be an open 
$S(2k-1)$-neighborhood of $x$ 
such that ${U}_F^x\cap\theta_{2k}(F)=\emptyset$. 
Then $|\sF|\le 2^\kappa$ and $\theta_{2k}(A)\subseteq\bigcup\{\theta_{2k}(F):F\in\sF\}$. Therefore the existence of 
the family $\{{U}_k^y(x):y\in A\}\cup\{{U}_F^x:F\in\sF\}$ shows that the $S(2k-1)$-pseudocharacter at 
$x$ has cardinality $\le 2^\kappa$.
\end{proof}

\begin{lemma}\label{LIG2G4}
Let $k\in \bN^{+}$. For every $S(3k-1)$-space $X$, $\psi_{2k}(X)\leq 2^{s_{2k-1}(X)}$.
\end{lemma}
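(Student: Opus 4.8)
The plan is to mirror the proof of Lemma \ref{LIG2G3} almost verbatim, interchanging the roles of the odd and even indices throughout: here we want a bound on $\psi_{2k}(X)$, so the witnessing family at each point must consist of sets whose \emph{closures} intersect to $\{x\}$, and correspondingly the ambient nesting we extract for $x$ should be a chain of $k$ open sets whose closures avoid a $\theta_{2k-1}$-closure. Concretely, set $\kappa=s_{2k-1}(X)$ and fix $x\in X$. Since $X$ is $S(3k-1)$, for each $y\neq x$ choose open sets $U_1^y(x),\dots,U_k^y(x)$ and $U_1(y),\dots,U_{2k}(y)$ with $x\in U_1^y(x)$, $\overline{U}_i^y(x)\subseteq U_{i+1}^y(x)$, $y\in U_1(y)$, $\overline{U}_i(y)\subseteq U_{i+1}(y)$, and the separating condition $\overline{U}_k^y(x)\cap U_{2k}(y)=\emptyset$ (this is exactly the $2k+(k-1)=3k-1$ nesting steps an $S(3k-1)$-space supplies). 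Then each $U_k(y)$ is an open $S(2k-1)$-neighborhood of $y$.

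Next I would apply Lemma \ref{LSapIG} to $C=X\setminus\{x\}$ and the family $\mathcal{U}=\{U_k(y):y\in C\}$, obtaining an $S(2k-1)$-discrete set $A\subseteq C$ with $|A|\le\kappa$ and $C\subseteq\theta_{2k-1}(A)\cup\bigcup\{U_k(y):y\in A\}$. For each $z\in\theta_{2k-1}(A)$ form $F_z\subseteq A$ by picking, for every open $S(2k-1)$-neighborhood $U$ of $z$, a point $z_U\in U\cap U_k(z)\cap A$; then $z\in\theta_{2k-1}(F_z)\subseteq\theta_{2k-1}(\overline{U}_k(z))\subseteq\overline{U}_{k+1}(z)\subseteq U_{2k}(z)$ (using iterated nesting and the identity $\cl_{\theta^n}(U)=\cl_{\theta^{n-1}}(\overline{U})$ for open $U$ noted in the preliminaries). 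Since $\overline{U}_k^z(x)\cap U_{2k}(z)=\emptyset$ we get $x\notin\theta_{2k-1}(F_z)$, and in fact $\overline{U}_k^z(x)\cap\theta_{2k-1}(F_z)=\emptyset$.

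Finally, let $\mathcal{F}=\{F\subseteq A:x\notin\theta_{2k-1}(F)\}$, so $|\mathcal{F}|\le 2^\kappa$ and $\theta_{2k-1}(A)\subseteq\bigcup\{\theta_{2k-1}(F):F\in\mathcal{F}\}$. For each $F\in\mathcal{F}$ choose an open $S(2k-1)$-neighborhood $U_F^x$ of $x$ with $\overline{U}_F^x\cap\theta_{2k-1}(F)=\emptyset$ — such a set exists because $X$ being $S(3k-1)$ in particular lets us separate $x$ from the closed set $\theta_{2k-1}(F)$ by a closed $S(2k-1)$-neighborhood (an $S(2k)$-neighborhood shrinks to one by Proposition \ref{P1}(b)). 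The family $\{\overline{U}_k^y(x):y\in A\}\cup\{\overline{U}_F^x:F\in\mathcal{F}\}$ has cardinality $\le 2^\kappa$, and it witnesses $\psi_{2k}(X)\le 2^\kappa$: given $w\neq x$, if $w\in\bigcup\{U_k(y):y\in A\}$ then $w\in\overline{U}_k(y)\subseteq U_{2k}(y)$ for some $y\in A$, so $\overline{U}_k^y(x)$ misses $w$; otherwise $w\in\theta_{2k-1}(A)\subseteq\theta_{2k-1}(F)$ for some $F\in\mathcal{F}$, and then $\overline{U}_F^x$ misses $w$. Hence the intersection of the closures in this family is $\{x\}$.

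The main obstacle is bookkeeping the index arithmetic: one must verify that an $S(3k-1)$-space supplies precisely the chain length needed so that the closure $\theta_{2k-1}$ of the $y$-side neighborhood lands inside a set still disjoint from a \emph{closed} $S(2k-1)$-neighborhood of $x$, and that the final family is indexed so that the \emph{closures} (not the sets themselves) intersect down to $x$ — this is what distinguishes $\psi_{2k}$ from $\psi_{2k-1}$ and forces the choice $\kappa=s_{2k-1}(X)$ together with Lemma \ref{LSapIG} rather than Lemma \ref{LSchIG}. Everything else is a routine transcription of the preceding lemmas.
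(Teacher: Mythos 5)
Your overall strategy is exactly the paper's: the same asymmetric decomposition of the $S(3k-1)$ separation (a chain of $k$ sets around $x$ and $2k$ sets around $y$ with $\overline{U}_k^y(x)\cap U_{2k}(y)=\emptyset$), the choice $\kappa=s_{2k-1}(X)$ with Lemma \ref{LSapIG}, the sets $F_z$, the family $\sF$, and the final family $\{\overline{U}_k^y(x):y\in A\}\cup\{\overline{U}_F^x:F\in\sF\}$. However, one step fails as written: the containment $\theta_{2k-1}(\overline{U}_k(z))\subseteq\overline{U}_{k+1}(z)$. To show that $w\notin\overline{U}_m(z)$ implies $w\notin\theta_{2k-1}(\overline{U}_k(z))$ you must exhibit a chain of $k$ open sets around $w$ whose \emph{last} member misses $\overline{U}_k(z)$; the available chain is $X\setminus\overline{U}_m(z),\ X\setminus\overline{U}_{m-1}(z),\dots$, and for its $k$-th member to be $X\setminus\overline{U}_k(z)$ you need $m\ge 2k-1$, not $m=k+1$ (the two agree only for $k=2$; for $k=1$ your displayed chain even asserts $\overline{U}_2(z)\subseteq U_2(z)$). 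The identity $\cl_{\theta^n}(U)=\cl_{\theta^{n-1}}(\overline{U})$ concerns the even-indexed closures ($\theta_{2n}$ in the paper's notation) and does not yield this odd-indexed claim. The correct, and sufficient, bound is the one the paper uses: $\theta_{2k-1}(F_z)\subseteq\theta_{2k-1}(U_k(z))\subseteq\overline{U}_{2k-1}(z)\subseteq U_{2k}(z)$, which still gives $\overline{U}_k^z(x)\cap\theta_{2k-1}(F_z)=\emptyset$; with that repair the rest of your argument goes through.

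A second, smaller point: you justify the choice of $U_F^x$ for an \emph{arbitrary} $F\in\sF$ by saying that $S(3k-1)$ lets you separate $x$ from the closed set $\theta_{2k-1}(F)$. The $S(n)$ axioms separate pairs of points, not a point from a closed set, and $x\notin\theta_{2k-1}(F)$ by itself only provides an open $S(2k-1)$-neighborhood of $x$ missing $F$, not one whose closure misses $\theta_{2k-1}(F)$. This does no harm to the proof, because the only members of $\sF$ actually needed are the sets $F_z$, and for those you have already established $\overline{U}_k^z(x)\cap\theta_{2k-1}(F_z)=\emptyset$, so $U_k^z(x)$ itself serves as $U_{F_z}^x$; the paper's proof (which states the same choice of $U_F^x$ without justification) should be read the same way. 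Your closing verification that the closures of the family intersect exactly in $\{x\}$ is correct and is a useful explicit supplement to the paper's terser conclusion.
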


\begin{proof}
Let $\kappa=s_{2k-1}(X)$ and $x\in X$. Since $X$ is an $S(3k-1)$-space, for each $y\in X\setminus\{x\}$ let 
$U_1^y(x),\ldots,U_{k}^y(x)$, $U_1(y),\ldots,U_{2k}(y)$ be open sets in $X$ such that 
$x\in U_1^y(x)\subseteq\overline{U}_1^y(x)\subseteq\ldots\subseteq U_{k}^y(x)$, 
$y\in U_1(y)\subseteq\overline{U}_1(y)\subseteq\ldots\subseteq U_{2k}(y)$ and 
$\overline{U}_{k}^y(x)\cap {U}_{2k}(y)=\emptyset$. Then for each $y\in C$, 
${U}_{k}(y)$ is an open $S(2k-1)$-neighborhood of $y$. 
Therefore for the set $C=X\setminus\{x\}$ and the family $\sU=\{U_{k}(y):y\in C\}$ we can use Lemma \ref{LSapIG} to 
find an $S(2k-1)$-discrete subset $A$ of $C$ such that $|A|\le\kappa$ and 
$C\subseteq \theta_{2k-1}(A)\cup\bigcup \left\{{U}_{k}(y):y\in A\right\}$. 

For each $z\in \theta_{2k-1}(A)$ and for each open $S(2k-1)$-neighborhood $U$ of $z$ we can choose a point 
$z_U\in {U}\cap {U}_k(z)\cap A$ and let us 
denote by $F_z$ the resulting set. Then $F_z\subseteq A$, 
$z\in\theta_{2k-1}(F_z)\subseteq \theta_{2k-1}({U}_k(z))\subseteq \overline{U}_{2k-1}(z)$ 
and since $\overline{U}_{k}^z(x)\cap {U}_{2k}(z)=\emptyset$ we have 
$x\notin \theta_{2k}({U}_{2k}(z))$, hence $x\notin \theta_{2k-1}(F_z)$ and clearly, 
$\overline{U}_{k}^z(x)\cap\theta_{2k-1}(F_z)=\emptyset$.

Let $\sF=\{F:F\subseteq A, x\notin\theta_{2k-1}(F)\}$ and for each $F\in\sF$ let $U_F^x$ be an open 
$S(2k-1)$-neighborhood of $x$ 
such that $\overline{U}_F^x\cap\theta_{2k-1}(F)=\emptyset$. 
Then $|\sF|\le 2^\kappa$ and $\theta_{2k-1}(A)\subseteq\bigcup\{\theta_{2k-1}(F):F\in\sF\}$. Therefore the existence of 
the family $\{\overline{U}_k^y(x):y\in A\}\cup\{\overline{U}_F^x:F\in\sF\}$ shows that the $S(2k)$-pseudocharacter at 
$x$ has cardinality $\le 2^\kappa$.
\end{proof}

Now, as corollaries of the previous four lemmas we obtain the following results:

\begin{theorem}\label{TIG2G1}
Let $k\in \bN^{+}$. If $X$ is an $S(3k)$-space, then $\left| X\right|\leq 2^{2^{s_{2k}(X)}}$.
\end{theorem}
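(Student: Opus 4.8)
The plan is to combine the two ingredients that have just been assembled for $S(3k)$-spaces. We want to bound $|X|$ for an $S(3k)$-space $X$, and since every $S(3k)$-space is in particular an $S(2k)$-space, Theorem \ref{TScIG} applies and gives $|X|\le 2^{s_{2k}(X)\psi_{2k}(X)}$. So the whole proof reduces to controlling the product $s_{2k}(X)\cdot\psi_{2k}(X)$ by $2^{s_{2k}(X)}$.

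The key step is Lemma \ref{LIG2G1}, which says precisely that for an $S(3k)$-space $X$ one has $\psi_{2k}(X)\le 2^{s_{2k}(X)}$. Writing $\kappa=s_{2k}(X)$, we then get $s_{2k}(X)\cdot\psi_{2k}(X)\le \kappa\cdot 2^{\kappa}=2^{\kappa}$ (here we use that $\kappa$ is infinite, which is built into the definition of $s_{2k}$ since it is always at least $\aleph_0$). Plugging this into the inequality from Theorem \ref{TScIG} yields
\[
|X|\le 2^{s_{2k}(X)\psi_{2k}(X)}\le 2^{2^{s_{2k}(X)}},
\]
which is exactly the assertion. So the argument is a two-line deduction: invoke Theorem \ref{TScIG} for the $S(2k)$-space $X$, then substitute the bound on $\psi_{2k}$ coming from Lemma \ref{LIG2G1}, and simplify the cardinal arithmetic.

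There is essentially no obstacle here, because all the genuine work has already been done in establishing Theorem \ref{TScIG} (the recursive closing-off construction using $S(2k)$-cellularity/character) and in Lemma \ref{LIG2G1} (the argument producing an $S(2k)$-pseudobase of size $\le 2^{s_{2k}(X)}$ at each point of an $S(3k)$-space). The only thing to be slightly careful about is making sure the hypothesis chain is valid: $X$ is assumed $S(3k)$, hence $S(2k)$ (so Theorem \ref{TScIG} is legitimately applicable), and hence the quantity $\psi_{2k}(X)$ is defined and Lemma \ref{LIG2G1} gives the stated bound. After that the cardinal inequality $\kappa\cdot 2^\kappa=2^\kappa$ for infinite $\kappa$ finishes it, so I would just state the deduction plainly without belaboring the arithmetic.
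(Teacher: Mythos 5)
Your proof is correct and is essentially the paper's own argument: invoke Theorem \ref{TScIG} (whose hypothesis is satisfied because an $S(3k)$-space is in particular an $S(k)$-space), combine it with the bound $\psi_{2k}(X)\le 2^{s_{2k}(X)}$ from Lemma \ref{LIG2G1}, and finish with $\kappa\cdot 2^{\kappa}=2^{\kappa}$. The only minor slip is that you cite Theorem \ref{TScIG} as requiring an $S(2k)$-space, whereas its stated hypothesis is only that $X$ be an $S(k)$-space; since $S(3k)$ implies both, this does not affect the argument.
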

\begin{proof}
The claim follows directly from the fact that every $S(3k)$-space is an $S(k)$-space, Theorem \ref{TScIG}, 
and Lemma \ref{LIG2G1}.
\end{proof}

\begin{corollary}\label{CIG2}
If $X$ is an $S(3)$-space, then $\left| X\right|\leq 2^{2^{s_{2}(X)}}$.
\end{corollary}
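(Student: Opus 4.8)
The plan is to derive Corollary \ref{CIG2} as the instance $k=1$ of Theorem \ref{TIG2G1}. Setting $k=1$, an $S(3k)$-space is an $S(3)$-space, the cardinal $s_{2k}(X)$ becomes $s_2(X)$, and the target inequality $|X|\le 2^{2^{s_{2k}(X)}}$ becomes exactly $|X|\le 2^{2^{s_2(X)}}$, which is the statement of the corollary. So no new argument is required beyond invoking the already-proved theorem.

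First I would observe that since every $S(3)$-space is an $S(1)$-space (Hausdorff), and more to the point is an $S(k)$-space with $k=1$, Theorem \ref{TScIG} applies and gives $|X|\le 2^{s_{2}(X)\psi_{2}(X)}$. Next I would invoke Lemma \ref{LIG2G1} in the case $k=1$ (which is precisely Corollary \ref{CIG1}): for every $S(3)$-space $X$ we have $\psi_{2}(X)\le 2^{s_{2}(X)}$. Combining these two facts, $s_2(X)\psi_2(X)\le s_2(X)\cdot 2^{s_2(X)}=2^{s_2(X)}$, and therefore $|X|\le 2^{s_2(X)\psi_2(X)}\le 2^{2^{s_2(X)}}$, as desired.

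There is no real obstacle here, since the corollary is a direct specialization; the genuine work has already been carried out in Theorem \ref{TScIG} and Lemma \ref{LIG2G1}. If anything, the only point meriting a word is the cardinal arithmetic step $s_2(X)\cdot 2^{s_2(X)}=2^{s_2(X)}$, which is immediate because $s_2(X)\le 2^{s_2(X)}$ for the infinite cardinal $s_2(X)$.

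\begin{proof}
This is the case $k=1$ of Theorem \ref{TIG2G1}. Explicitly, every $S(3)$-space is an $S(1)$-space, so by Theorem \ref{TScIG} (with $k=1$) we have $\left| X\right|\leq 2^{s_{2}(X)\psi_{2}(X)}$, and by Corollary \ref{CIG1} we have $\psi_{2}(X)\leq 2^{s_{2}(X)}$. Since $s_2(X)$ is an infinite cardinal, $s_{2}(X)\cdot 2^{s_{2}(X)}=2^{s_{2}(X)}$, hence $s_{2}(X)\psi_{2}(X)\leq 2^{s_{2}(X)}$ and therefore $\left| X\right|\leq 2^{2^{s_{2}(X)}}$.
\end{proof}
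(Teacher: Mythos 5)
Your proof is correct and follows exactly the paper's route: the corollary is the case $k=1$ of Theorem \ref{TIG2G1}, obtained by combining Theorem \ref{TScIG} (with $k=1$, since an $S(3)$-space is an $S(1)$-space) with Lemma \ref{LIG2G1}/Corollary \ref{CIG1} and the trivial cardinal arithmetic $s_2(X)\cdot 2^{s_2(X)}=2^{s_2(X)}$. Nothing is missing.
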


\begin{theorem}\label{TIG2G2}
Let $k\in \bN^{+}$. If $X$ is an $S(3k-2)$-space, then $\left| X\right|\leq 2^{2^{s_{2k-1}(X)}}$.
\end{theorem}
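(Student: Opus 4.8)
The plan is to obtain this bound as an immediate corollary of the two principal tools already in hand, in exact parallel with the proof of Theorem \ref{TIG2G1}: the generalized Hajnal--Juh\'asz inequality of Theorem \ref{THJIG} and the estimate on the $S(2k-1)$-pseudocharacter of $S(3k-2)$-spaces given by Lemma \ref{LIG2G2}.

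First I would record the (routine) fact that the classes $S(n)$ are nested downward: if $n\ge m$, then every $S(n)$-space is an $S(m)$-space. This follows directly from Definition \ref{D1} — given open sets $U_1,\dots,U_n$ witnessing that a point $x$ is $S(n)$-separated from a set $A$, the sets $U_2,\dots,U_n$ witness that $x$ is $S(n-1)$-separated from $A$ — and it is also visible through Propositions \ref{P1} and \ref{P2}. In particular, since $3k-2\ge k-1$ for every $k\in\bN^+$, every $S(3k-2)$-space $X$ is an $S(k-1)$-space, so Theorem \ref{THJIG} applies and yields $\left|X\right|\le 2^{s_{2k-1}(X)\psi_{2k-1}(X)}$.

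Next I would invoke Lemma \ref{LIG2G2}, whose hypothesis is precisely that $X$ be an $S(3k-2)$-space, to get $\psi_{2k-1}(X)\le 2^{s_{2k-1}(X)}$. Writing $\lambda=s_{2k-1}(X)$, which is an infinite cardinal by Definition \ref{D6}, and substituting, we obtain $\left|X\right|\le 2^{\lambda\cdot 2^{\lambda}}=2^{2^{\lambda}}$, using $\lambda\cdot 2^{\lambda}=2^{\lambda}$ for infinite $\lambda$. This is exactly the asserted inequality $\left|X\right|\le 2^{2^{s_{2k-1}(X)}}$.

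There is no genuine obstacle here; the entire content is already packaged in Theorem \ref{THJIG} and Lemma \ref{LIG2G2}. The only point deserving a moment's attention — and it is entirely routine — is checking that the two invoked results have compatible hypotheses: Theorem \ref{THJIG} needs only the $S(k-1)$ property, whereas Lemma \ref{LIG2G2} needs the stronger $S(3k-2)$ property, and one must confirm the inequality $3k-2\ge k-1$ so that the stronger assumption implies the weaker one used by Theorem \ref{THJIG}.
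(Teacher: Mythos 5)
Your proposal is correct and follows exactly the paper's route: note that every $S(3k-2)$-space is an $S(k-1)$-space, apply Theorem \ref{THJIG} to get $\left|X\right|\le 2^{s_{2k-1}(X)\psi_{2k-1}(X)}$, and then use Lemma \ref{LIG2G2} together with the arithmetic $\lambda\cdot 2^{\lambda}=2^{\lambda}$. The only difference is that you spell out the nesting of the $S(n)$ classes and the cardinal computation, which the paper leaves implicit.
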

\begin{proof}
The claim follows directly from the fact that every $S(3k-2)$-space is an $S(k-1)$-space, Theorem \ref{THJIG}, 
and Lemma \ref{LIG2G2}.
\end{proof}

When $n=1$ in Theorem \ref{TIG2G2} we obtain again Hajnal and Juh\'asz theorem that if $X$ is a 
Hausdorff space, then $|X|\le 2^{2^{s(X)}}$.

\begin{theorem}\label{TIG2G3}
Let $k\in \bN^{+}$. If $X$ is an $S(3k-1)$-space, then $\left| X\right|\leq 2^{s_{2k-1}(X)\cdot2^{s_{2k}(X)}}$ and 
$\left| X\right|\leq 2^{2^{s_{2k-1}(X)}}$.
\end{theorem}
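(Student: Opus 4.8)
The plan is to derive Theorem \ref{TIG2G3} as a corollary of the preceding results in exactly the same way that Theorems \ref{TIG2G1} and \ref{TIG2G2} were obtained. The statement for an $S(3k-1)$-space $X$ has two parts: $\left| X\right|\leq 2^{s_{2k-1}(X)\cdot2^{s_{2k}(X)}}$ and $\left| X\right|\leq 2^{2^{s_{2k-1}(X)}}$. First I would note that every $S(3k-1)$-space is in particular an $S(k)$-space (since $3k-1 \ge k$ for every $k\in\bN^+$), so Theorem \ref{TScIG} applies and gives $\left| X\right|\leq 2^{s_{2k}(X)\psi_{2k}(X)}$. Then I would invoke Lemma \ref{LIG2G3}, which tells us $\psi_{2k-1}(X)\leq 2^{s_{2k}(X)}$. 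Combining the two, and using $\psi_{2k-1}(X)\le\psi_{2k}(X)$, we need to be slightly careful: Theorem \ref{TScIG} is stated with $\psi_{2k}$, not $\psi_{2k-1}$. So the first part should instead be read off from the odd-indexed inequality.

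Concretely, the cleaner route is: since $X$ is an $S(3k-1)$-space it is also an $S(k-1)$-space, so Theorem \ref{THJIG} yields $\left| X\right|\leq 2^{s_{2k-1}(X)\psi_{2k-1}(X)}$; then Lemma \ref{LIG2G3} gives $\psi_{2k-1}(X)\leq 2^{s_{2k}(X)}$, and substituting produces $\left| X\right|\leq 2^{s_{2k-1}(X)\cdot 2^{s_{2k}(X)}}$, which is the first claimed inequality. For the second inequality, I would use instead that $X$ is an $S(k)$-space, so Theorem \ref{TScIG} gives $\left| X\right|\leq 2^{s_{2k}(X)\psi_{2k}(X)}$, and then Lemma \ref{LIG2G4} (valid for $S(3k-1)$-spaces) gives $\psi_{2k}(X)\leq 2^{s_{2k-1}(X)}$; hence $\left| X\right|\leq 2^{s_{2k}(X)\cdot 2^{s_{2k-1}(X)}} = 2^{2^{s_{2k-1}(X)}}$, where the last equality uses that $s_{2k}(X)\le 2^{s_{2k-1}(X)}$ (both cardinals are infinite and $s_{2k}(X)\le s_{2k-1}(X)$ always). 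Alternatively, the second inequality follows from the first: $s_{2k-1}(X)\cdot 2^{s_{2k}(X)} \le 2^{s_{2k-1}(X)}$ because $s_{2k}(X)\le s_{2k-1}(X)$, so $2^{s_{2k}(X)}\le 2^{s_{2k-1}(X)}$ and the product is absorbed.

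The proof I would write is therefore just two or three lines: cite Theorem \ref{THJIG} (using $S(k-1)\supseteq S(3k-1)$) together with Lemma \ref{LIG2G3} for the first bound, and then observe that $s_{2k-1}(X)\cdot 2^{s_{2k}(X)}\le 2^{s_{2k-1}(X)}$ since $s_{2k}(X)\le s_{2k-1}(X)$, which gives the second bound. I do not anticipate any real obstacle here — the only thing to watch is making sure the index arithmetic is right (that $S(3k-1)$-spaces are $S(k-1)$-spaces and $S(k)$-spaces, which is immediate from $3k-1\ge k > k-1$) and that the cardinal absorption $\kappa\cdot 2^\lambda = 2^\lambda$ when $\kappa\le 2^\lambda$ is applied correctly. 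This mirrors verbatim the structure of the proofs of Theorems \ref{TIG2G1} and \ref{TIG2G2}, so the write-up should be:

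\begin{proof}
Every $S(3k-1)$-space is an $S(k-1)$-space, so by Theorem \ref{THJIG} we have $\left| X\right|\leq 2^{s_{2k-1}(X)\psi_{2k-1}(X)}$, and by Lemma \ref{LIG2G3} we have $\psi_{2k-1}(X)\leq 2^{s_{2k}(X)}$. Hence $\left| X\right|\leq 2^{s_{2k-1}(X)\cdot 2^{s_{2k}(X)}}$. Since $s_{2k}(X)\leq s_{2k-1}(X)$, we obtain $s_{2k-1}(X)\cdot 2^{s_{2k}(X)}\leq 2^{s_{2k-1}(X)}$, and therefore $\left| X\right|\leq 2^{2^{s_{2k-1}(X)}}$.
\end{proof}
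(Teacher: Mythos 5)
Your proof is correct. For the first inequality you follow exactly the paper's route: an $S(3k-1)$-space is an $S(k-1)$-space, so Theorem \ref{THJIG} gives $|X|\le 2^{s_{2k-1}(X)\psi_{2k-1}(X)}$, and Lemma \ref{LIG2G3} bounds $\psi_{2k-1}(X)$ by $2^{s_{2k}(X)}$. For the second inequality you diverge slightly: the paper uses that $X$ is also an $S(k)$-space and combines Theorem \ref{TScIG} (which gives $|X|\le 2^{s_{2k}(X)\psi_{2k}(X)}$) with Lemma \ref{LIG2G4} (which gives $\psi_{2k}(X)\le 2^{s_{2k-1}(X)}$), then absorbs $s_{2k}(X)\le s_{2k-1}(X)\le 2^{s_{2k-1}(X)}$; you instead derive it purely arithmetically from the first inequality, using $s_{2k}(X)\le s_{2k-1}(X)$ so that $s_{2k-1}(X)\cdot 2^{s_{2k}(X)}\le 2^{s_{2k-1}(X)}$. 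Both arguments are valid and land on the same bound $2^{2^{s_{2k-1}(X)}}$; your route is marginally more economical, since it never invokes Theorem \ref{TScIG} or Lemma \ref{LIG2G4}, while the paper's route has the side benefit of actually putting Lemma \ref{LIG2G4} to use via the independent estimate $|X|\le 2^{s_{2k}(X)\cdot 2^{s_{2k-1}(X)}}$. The monotonicity $s_{2k}(X)\le s_{2k-1}(X)$ that you rely on is immediate from Definition \ref{D6} (every $S(2k)$-discrete set is $S(2k-1)$-discrete) and is the same fact the paper itself cites in its proof.
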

\begin{proof}
The claim follows directly from the fact that every $S(3k-1)$-space is an $S(k-1)$ and $S(k)$-space, Theorem \ref{THJIG}, 
Theorem \ref{TScIG}, Lemma \ref{LIG2G3}, Lemma \ref{LIG2G4} and the fact that $s_{2k}(X)\le s_{2k-1}(X)$ for 
every $k\in \bN^{+}$.
\end{proof}

\end{document}